\newcommand{\shortdot}[1]{\raisebox{-0.4pt}{$\stackrel{\bullet}{#1}$}}
\theoremstyle{plain}
\newtheorem{theorem}{Theorem}[section]
\theoremstyle{definition}
\theoremstyle{remark}
\begin{document}

\title{Breaking the Symmetry in Queues with Delayed Information}
\author{   
Philip Doldo \\ Center for Applied Mathematics \\ Cornell University
\\ 657 Rhodes Hall, Ithaca, NY 14853 \\  pmd93@cornell.edu  \\ 
\and
  Jamol Pender \\ School of Operations Research and Information Engineering \\ Center for Applied Mathematics \\ Cornell University
\\ 228 Rhodes Hall, Ithaca, NY 14853 \\  jjp274@cornell.edu  \\ 
\and
Richard Rand \\ Department of Mathematics \\ Sibley School of Mechanical and Aerospace Engineering  \\ Cornell University
\\ 417 Upson Hall, Ithaca, NY 14853 \\  rand@math.cornell.edu  \\ 
 }


\maketitle
\begin{abstract}

Giving customers queue length information about a service system has the potential to influence the decision of a customer to join a queue.  Thus, it is imperative for managers of queueing systems to understand how the information that they provide will affect the performance of the system.  To this end, we construct and analyze a two-dimensional deterministic fluid model that incorporates customer choice behavior based on \textbf{delayed} queue length information.  All of the previous literature assumes that all queues have identical parameters and the underlying dynamical system is symmetric.  However, in this paper, we relax this symmetry assumption by allowing the arrival rates, service rates, and the choice model parameters to be different for each queue.  Our methodology exploits the method of multiple scales and asymptotic analysis to understand how to break the symmetry.  We find that the asymmetry can have a large impact on the underlying dynamics of the queueing system.      
\end{abstract}


\section{Introduction} \label{sec_intro}

In many service systems, customers are given information about the system, which can influence their decision to join the queue.  In many of these systems, either the queue length or the waiting time might be given to customers to estimate the time that they might lose waiting to receive service.  Consequently, it is important for service system managers to understand how the information that they provide to customers can affect the underlying queueing system's dynamics. The most common way that service systems interact with their customers are through delay announcements. 

Delay announcements provide customers with information about the estimated time that a customer will wait for service.  This is usually important when customers have multiple decisions about what service they might want to receive.  For example, if one takes a trip to Disney World, one has the option to take many different rides as seen by \citet{nirenberg2018impact}.  Thus, the information provided by the company will influence park goers to join the line for specific rides.  Since this issue is quite important, the impact of delay announcements on the performance of queueing systems has been studied quite extensively in the applied probability literature.  See for example work by \citet{armony2004customer, guo2007analysis, hassin2007information, armony2009impact, guo2009impacts, jouini2009queueing, jouini2011call, allon2011impact, allon2011we, whitt1999improving}.  Unfortunately, most of the literature assumes that the information that the customer receives is in real time and is 100\% accurate. 

However, there are two important scenarios where real-time information is unreasonable to assume.  First, in reality, most of the information communicated to customers is through some electronic device.  Generally these devices need time to process the wait time information and send it to the customer.  In both processing and sending the information, it is possible that the information experiences some time lag.  The second scenario is where customers must commit to a queue before physically joining the queue.  In this setting, the information itself is not delayed, but the customer must experience a travel delay.  During this delay, the system state will most certainly change by the time the customer arrives to the queue.  From the perspective of Disney World, customers who use the My Disney app, first see the wait times of the rides, choose a ride, and then walk to that ride.  The time the customer spends walking to the ride represents the time delay.  

Unlike the previous literature, this paper takes on the challenge of trying to understand the impact of giving delayed information to customers and how this impacts the dynamics of the queueing system.  However, we are not the first to consider delayed information in the context of queueing systems.  The impact of delayed information has been studied in previous work by \citet{pender2017queues, pender2018analysis, novitzky2019nonlinear,  penderstochastic}.  However, one crucial assumption in all of the previous work is that all queues have the same arrival rates, service rates, and the same choice model function.  Thus, the previous models have a symmetry that is easy to exploit for mathematical and analysis purposes.  In this paper, we attempt to explore the same delayed information queueing model, but instead where the queues have different arrival rates, service rates and different choice functions, thereby \emph{breaking the symmetry}.  This is a significant challenge as symmetry was the key ingredient in previous analyses of our model.   

To this end, we consider an asymmetric two-dimensional system of delay differential equations in which the customer choice model is informed by delayed queue length information. We apply techniques from asymptotic analysis such as the method of multiple scales to analyze how the stability of the system depends on how delayed the information in the choice model is provided to customers. In doing so, we derive a first-order approximation of the asymmetric equilibrium, the critical delay for a Hopf bifurcation to occur, and the amplitude of oscillations when a limit cycle is born.  To find these quantities, we employ Lindstedt's method to derive an analytic expression for an approximation of the amplitude of limit cycles when the delay is close to the critical delay.  Our analysis provides additional insight into how the asymmetry of our model can impact the performance of the underlying queueing system.

 \subsection{Main Contributions of Paper}

The contributions of this work can be summarized as follows.

\begin{itemize}
\item We analyze an asymmetric two-dimensional fluid model that uses delayed queue length information to inform customers about the queue length at each queue. We show how the asymmetry affects the queueing model's equilibrium.
\item Using the method of multiple scales, we derive an approximation for the critical delay, which determines the location of a Hopf bifurcation, in terms of the queueing model parameters.
\item We derive an asymptotic closed form approximation for the amplitudes of the limit cycles that arise when the delay is larger than the critical delay.
\end{itemize}


\subsection{Organization of Paper}

The remainder of this paper is organized as follows. In Section~\ref{sec_modeling}, we review the symmetric two dimensional queueing model and then introduce the asymmetric model that we will analyze. Section~\ref{sec_equilibrium} derives an expression for the new approximate equilibrium point of the asymmetric system.  We demonstrate through several numerical examples to show the equilibrium changes as the queueing model's parameters are varied. Section~\ref{sec_delta_mod} uses the method of multiple scales and asymptotic analysis to find the critical delay at which the stability of the delay differential equations changes.  We also demonstrate numerically that our approximate critical delays are quite accurate and determine the location of Hopf bifurcations.  In Section~\ref{sec_amplitude}, we use Lindstedt's method to approximate the amplitude of limit cycles when the delay is near the critical delay.  We show through numerical examples that our amplitude approximations are accurate near the critical delay, even with the model asymmetry.  Finally in Section~\ref{sec_conclusion}, we conclude with directions for future research related to this work.

\section{Asymmetric Queueing Model} \label{sec_modeling}

In this section, we describe the asymmetric queueing model that we will analyze in this paper.  In previous literature on queueing systems with delayed information, such as \citet{novitzky2019nonlinear} and \citet{pender2018analysis}, a common assumption is that the queueing system is symmetric. This symmetry was assumed for convenience in the analysis since the analysis of an N-dimensional DDE system can be reduced to a one dimensional DDE.  In this paper, our focus is on understanding the impact of asymmetry on the dynamics of the queuing system. In fact, it can be shown that the asymmetric model does not yield an explicit closed form formula for the equilibrium and the equilibrium can only be written as the solution to a fixed point equation.  This is true even in the two dimensional case.  Thus, the asymmetric model presents significant mathematical challenges that the symmetric model does not.  

The symmetric model used in previous literature consists of two infinite-server queues.  The two queues are coupled through the arrival rate function, which is equal to the product of the arrival rate $\lambda > 0$ and the probabilistic choice model for joining each queue. The choice model that determines the probabilities of joining each queue is based on a Multinomial Logit Model (MNL) \citet{ben1999discrete}, \citet{train2009discrete} that makes the decision off of delayed queue length information, as shown in the following system \citet{novitzky2019limiting}.  Customers are served immediately at each queue at rate $\mu > 0$ and therefore the total departure rate at each queue is queue length times the service rate.  The infinite server queue is widely used as a canonical model that represents the best one can hope for, see for example \citet{iglehart1965limiting, fralix2009infinite, daw2018queues, daw2019distributions, daw2019new}. This is because the infinite server queue is a lower bound for multi-server queues without abandonment.  From a dynamical system perspective, it was shown in \citet{novitzky2019limiting} that it is unnecessary to study fluid models with a finite number of servers as the finite server model can be reduced to an infinite server dynamical system model, with modified different parameters.  The two delay differential equations in the symmetric case are given by the following equations

\begin{align}
\shortdot{q}_1(t) &= \lambda  \cdot \frac{\exp(- \theta q_1(t-\Delta) + \alpha )}{\exp(-\theta  q_1(t-\Delta) + \alpha ) + \exp(-\theta  q_2(t-\Delta) + \alpha)} - \mu  q_1(t) \label{symmetric equation 1}\\
\shortdot{q}_2(t) &= \lambda \cdot \frac{\exp(-\theta  q_2(t-\Delta) + \alpha)}{\exp(-\theta  q_1(t-\Delta) + \alpha ) + \exp(-\theta  q_2(t-\Delta) + \alpha)} - \mu q_2(t) \label{symmetric equation 2}
\end{align}

\noindent where we assume that $q_1(t)$ and $q_2(t)$, which represent the queue lengths as functions of time, start with different initial continuous functions on the interval $[-\Delta, 0].$ One should note that if the two queue lengths in the symmetric model start with identical initial functions, then they will remain the same for all time.  Now we will describe the symmetric model's parameters.  The parameter $\lambda$ represents the arrival rate, which is the rate at which customers arrive to each queue.  The parameter $\mu$ is the service rate at which servers will serve each customer in the system.  The parameter $\theta$ is the customer sensitivity to the queue length.  When the parameter $\theta$ is large, then customers are highly sensitive to the queue length.  In fact, when we let $\theta \to \infty$, the MNL model converges to the indicator function for the smallest queue.  In addition, when we let $\theta \to 0$, the MNL model converges to $\frac{1}{N}$ and the system becomes a system of N independent and uncoupled infinite server queues.  The parameter $\alpha$ is the customer preference parameter.  Whichever queue has the largest preference parameter $\alpha$, then customers are more likely to go to that queue regardless of the queue length. The parameter $\alpha$ may initially seem pointless as it cancels in the the symmetric system, however, we include it for clarity because it will not cancel in the asymmetric system.  With those four model parameters, we can break the symmetry by perturbing the parameters associated with the first queue, yielding the following asymmetric queueing system

\begin{align}
\shortdot{q}_1(t) &= (\lambda + \epsilon \hat{\lambda}) \cdot \frac{\exp(- (\theta + \epsilon \hat{\theta}) q_1(t-\Delta) + (\alpha + \epsilon \hat{\alpha}))}{\exp(-(\theta + \epsilon \hat{\theta}) q_1(t-\Delta) + (\alpha + \epsilon \hat{\alpha})) + \exp(-\theta  q_2(t-\Delta) + \alpha)} \nonumber \\ &- (\mu + \epsilon \hat{\mu}) q_1(t) \label{perturbed equation 1} \\
\shortdot{q}_2(t) &= \lambda \cdot \frac{\exp(-\theta  q_2(t-\Delta) + \alpha)}{\exp(-(\theta + \epsilon \hat{\theta}) q_1(t-\Delta) + (\alpha + \epsilon \hat{\alpha})) + \exp(-\theta q_2(t-\Delta) + \alpha)} - \mu q_2(t) \label{perturbed equation 2}
\end{align}

\noindent where $\epsilon$ is assumed to be a small parameter. This is the asymmetric system that we will be concerned with throughout this paper.

Before we move to the analysis of the asymmetric model, we believe that it is important to observe that the asymmetric model can be viewed as a symmetric model where the parameters are uncertain or random.  Thus, our asymmetric model can be used to provide confidence intervals around the symmetric model when the model parameters are unknown.  This is also useful from a statistical perspective when the model parameters are obtained through some inference analysis and they are not exactly symmetric.  In the age of of uncertainty quantification, the asymmetry analysis provides information about the DDEs with random parameters.


\section{Asymptotic Analysis of the Equilibrium } \label{sec_equilibrium}

In this paper, our goal is to analyze the stability of the queueing system as a function of the model parameters and the delayed information and to approximate the amplitude of the limit cycles near the bifurcation point. In order to understand the stability of the queueing model, we must calculate the equilibrium or an approximation equilibrium for our queueing model.  For the symmetric model, \citet{novitzky2019nonlinear} shows that the symmetric model given in Equations \ref{symmetric equation 1}-\ref{symmetric equation 2} has a unique equilibrium point at $q_1 = q_2 = \frac{\lambda}{2 \mu}$. In this section, we explore the effects that the asymmetry has on this equilibrium point. In doing so, we obtain a first-order (in $\epsilon$) approximation of the equilibrium of our perturbed system which is described in Theorem \ref{equilibrium_theorem}.
\begin{theorem}

\label{equilibrium_theorem}

The system of Equations \ref{perturbed equation 1}-\ref{perturbed equation 2} has an approximate (up to order $\epsilon^2$) equilibrium point at $$\left(q_1^*, q_2^* \right)= \left(\frac{\lambda}{2 \mu} + a\epsilon + O(\epsilon^2), \frac{\lambda}{2 \mu} + b \epsilon + O(\epsilon^2)\right)$$ where $$a = \frac{\lambda \theta + 4 \mu}{4 \mu (\lambda \theta + 2 \mu)} \hat{\lambda} + \frac{- \lambda(\lambda \theta + 4 \mu)}{4 \mu^2 (\lambda \theta + 2 \mu)} \hat{\mu} +  \frac{- \lambda^2}{4 \mu (\lambda \theta + 2 \mu)} \hat{\theta} + \frac{\lambda}{2(\lambda \theta + 2 \mu)} \hat{\alpha}$$ and $$b = \frac{\lambda \theta}{4 \mu (\lambda \theta + 2 \mu)} \hat{\lambda} + \frac{- \lambda^2 \theta}{4 \mu^2 (\lambda \theta + 2 \mu)} \hat{\mu} + \frac{\lambda^2}{4 \mu (\lambda \theta + 2 \mu)} \hat{\theta} + \frac{- \lambda}{2 (\lambda \theta + 2 \mu)} \hat{\alpha}.$$

\label{equilibrium theorem}

\end{theorem}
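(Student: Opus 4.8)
The plan is to pin down the true equilibrium of the asymmetric system \eqref{perturbed equation 1}--\eqref{perturbed equation 2} implicitly and then expand it as a regular power series in $\epsilon$. At any equilibrium the time derivatives vanish and the delayed arguments $q_i(t-\Delta)$ collapse to the constant equilibrium values $q_i^*$, so the delay $\Delta$ drops out entirely and we are left with two transcendental algebraic equations $F_1(q_1^*,q_2^*,\epsilon)=0$ and $F_2(q_1^*,q_2^*,\epsilon)=0$. My first step is to rewrite the MNL fraction in logistic form: setting $z = -(\theta+\epsilon\hat\theta)q_1^* + (\alpha+\epsilon\hat\alpha) + \theta q_2^* - \alpha$, the routing probability into queue $1$ equals $\sigma(z):=1/(1+e^{-z})$ and the probability into queue $2$ equals $1-\sigma(z)=\sigma(-z)$. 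The equilibrium conditions then read $(\lambda+\epsilon\hat\lambda)\,\sigma(z) = (\mu+\epsilon\hat\mu)\,q_1^*$ and $\lambda\,\sigma(-z) = \mu\,q_2^*$, and we will use repeatedly that $\sigma(0)=\tfrac12$ and $\sigma'(0)=\tfrac14$.

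Next I substitute the ansatz $q_1^* = \tfrac{\lambda}{2\mu} + a\epsilon + O(\epsilon^2)$ and $q_2^* = \tfrac{\lambda}{2\mu} + b\epsilon + O(\epsilon^2)$ and expand in $\epsilon$. Since $q_1^*-q_2^* = O(\epsilon)$, the argument $z$ is itself $O(\epsilon)$: one finds $z = \epsilon z_1 + O(\epsilon^2)$ with $z_1 = \theta(b-a) - \tfrac{\lambda\hat\theta}{2\mu} + \hat\alpha$, hence $\sigma(\pm z) = \tfrac12 \pm \tfrac14 \epsilon z_1 + O(\epsilon^2)$. At order $\epsilon^0$ both equations reduce to $\tfrac{\lambda}{2} = \mu\cdot\tfrac{\lambda}{2\mu}$, which confirms that $(\tfrac{\lambda}{2\mu},\tfrac{\lambda}{2\mu})$ is the unperturbed symmetric equilibrium of \citet{novitzky2019nonlinear} and fixes the leading order of the ansatz.

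Collecting the order-$\epsilon$ terms then gives a linear system in $a$ and $b$: the second equation yields $\mu b = -\tfrac{\lambda}{4} z_1$, and the first yields $\mu a = \tfrac{\hat\lambda}{2} - \tfrac{\lambda\hat\mu}{2\mu} + \tfrac{\lambda}{4} z_1$. Adding these eliminates $z_1$ and produces $a+b$ in terms of $\hat\lambda$ and $\hat\mu$ only; then substituting the definition of $z_1$ into the equation for $b$ and using the value of $a+b$ collapses everything to a single scalar equation $2(\lambda\theta+2\mu)\,b = \tfrac{\lambda\theta\hat\lambda}{2\mu} - \tfrac{\lambda^2\theta\hat\mu}{2\mu^2} + \tfrac{\lambda^2\hat\theta}{2\mu} - \lambda\hat\alpha$, whose solution is exactly the claimed expression for $b$; finally $a = (a+b)-b$ gives the claimed $a$. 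The only genuine labor is the Taylor bookkeeping in the logistic reformulation step — expanding the parameter-perturbed exponential ratio to first order and verifying that the cross terms combine into the stated $z_1$ — together with noting that the resulting $2\times 2$ system is nonsingular because its determinant is a positive multiple of $\lambda\theta+2\mu$; all the rest is elementary linear algebra. I would close with a short remark that this is a formal first-order expansion: the existence of a genuine nearby equilibrium for small $\epsilon$ follows from the implicit function theorem applied to $(F_1,F_2)$ at $\epsilon=0$, since its Jacobian there is precisely that nonsingular matrix, which also explains why (as noted in the text) no closed-form equilibrium is available.
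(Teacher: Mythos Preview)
Your proposal is correct and follows essentially the same route as the paper: substitute the ansatz into the equilibrium equations, Taylor expand the logistic ratio to first order in $\epsilon$ (the paper's $[1+\exp(\cdot)]^{-1}\approx \tfrac12-\tfrac14(\cdot)$ is exactly your $\sigma(0)=\tfrac12$, $\sigma'(0)=\tfrac14$), match $O(\epsilon)$ terms to obtain a $2\times 2$ linear system, and solve. Your presentation is slightly cleaner via the logistic notation and the explicit $a+b$ elimination, and your closing remark invoking the implicit function theorem to justify existence of a genuine nearby equilibrium is a nice addition that the paper does not include.
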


\begin{proof}
If we substitute in the constants $q_1^*$ and $q_2^*$ for $q_1$ and $q_2$ in the system \ref{symmetric equation 1}-\ref{symmetric equation 2}, respectively, we have that $q_1(t) = q_1(t - \Delta) = \frac{\lambda}{2 \mu} + a \epsilon + O(\epsilon^2)$ and $q_2(t) = q_2(t - \Delta) = \frac{\lambda}{2 \mu} + b \epsilon + O(\epsilon^2)$ which gives us 

\begin{align*}
0 &= (\lambda + \epsilon \hat{\lambda}) \left[ 1 + \exp\left(\epsilon\left(\theta (a-b) + \frac{\lambda}{2 \mu} \hat{\theta} - \hat{\alpha}\right)  + O(\epsilon^2)\right) \right]^{-1} - (\mu + \epsilon \hat{\mu}) \left( \frac{\lambda}{2 \mu} + a \epsilon + O(\epsilon^2) \right)\\
&= (\lambda + \epsilon \hat{\lambda}) \left( \frac{1}{2} - \frac{1}{4} \left( \theta (a - b) + \frac{\lambda}{2 \mu} \hat{\theta} - \hat{\alpha}  \right)\epsilon + O(\epsilon^2)  \right)  - (\mu + \epsilon \hat{\mu}) \left( \frac{\lambda}{2 \mu} + a \epsilon + O(\epsilon^2) \right)\\
\end{align*}

\noindent and

\begin{align*}
0 &= \lambda \left[ 1 + \exp\left( \epsilon \left(  \theta(b-a) - \frac{\lambda}{2 \mu} \hat{\theta} + \hat{\alpha} \right) + O(\epsilon^2) \right) \right]^{-1} - \mu \left( \frac{\lambda}{2 \mu} + b \epsilon + O(\epsilon^2) \right)\\
&= \lambda \left( \frac{1}{2} - \frac{1}{4} \left( \theta (b-a) - \frac{\lambda}{2 \mu} \hat{\theta} + \hat{\alpha}  \right) \epsilon + O(\epsilon^2) \right)- \mu \left( \frac{\lambda}{2 \mu} + b \epsilon + O(\epsilon^2) \right).
\end{align*}

\noindent Matching $O(\epsilon)$ terms, we get a system of two equations with two unknowns $a$ and $b$.

\begin{align*}
0 &= 2 \mu \lambda \theta (b - a)  - 8 \mu^2 a - \lambda^2 \hat{\theta} + 2 \mu \lambda \hat{\alpha} + 4 \mu \hat{\lambda} - 4 \lambda \hat{\mu}\\
0 &= 2 \mu \lambda \theta (a-b) - 8 \mu^2 b + \lambda^2 \hat{\theta} - 2 \mu \lambda \hat{\alpha}
\end{align*}

\noindent Solving this two dimensional system of equations gives us the desired values for $a$ and $b$.

\end{proof}

Our expression for the new approximate equilibrium makes it very easy to understand what happens to the equilibrium when only a single parameter is perturbed. We observe that the $\hat{\lambda}$ terms in $a$ and $b$ are both positive so that a positive perturbation in the arrival rate $\lambda$ will cause the equilibrium for each queue to increase, with the first queue's equilibrium increasing more as the $\hat{\lambda}$ term in $a$ is larger than the corresponding term in $b$. One should note this asymmetry in the arrival rate change.  In fact, this is because the increase in the arrival rate is direct to first queue, but is indirect for the second queue.  By similar reasoning, we see that positively perturbing the service rate $\mu$ will cause the equilibrium corresponding to each queue to decrease, with the first queue's equilibrium decreasing more than that of the second queue.  Thus, we observe the effects of increasing either $\hat{\lambda}$ or $\hat{\mu}$ are not symmetric with respect to each queue, but they have the same sign in this model.   However, if we only perturb either $\theta$ or $\alpha$, we observe that one of the queue length's equilibrium will increase while the other will decrease.  Despite the opposite signs of direction, the magnitude of the change is identical and symmetric.  



\subsection{Numerical Verification of Equilibrium}

In this section, we analyze the validity of Theorem \ref{equilibrium theorem} by plotting several numerical examples.  Below we show plots of queue length versus time to illustrate the shift in the equilibrium due to the perturbations of the model parameters. 

In Figure \ref{Fig1}, we consider the symmetric system for two values of $\Delta$, each of which shows us a qualitatively different behavior of the system as the queue lengths decay in one case and grow in the other. This trend will be discussed more in the following section. In this case, the equilibrium is at $q_1 = q_2 = \frac{\lambda}{2 \mu}$.

In Figure \ref{Fig2}, we consider the perturbed system where the only perturbed parameter is the queueing system's arrival rate $\lambda$. Since we increased the arrival rate for the first queue, it makes sense that the equilibrium for $q_1$ increases. However, we observe that the equilibrium for $q_2$ also increases, to a slightly less extent, and this is due to the fact that the probability of joining the second queue depends on the delayed length of the first queue in a way so that if the delayed length of the first queue increases (which of course happens because we increased the arrival rate into the first queue), then the probability of joining the second queue increases.

In Figure \ref{Fig3}, the only perturbed parameter is the service rate $\mu$. Since we increased the service rate for the first queue, we see that the equilibrium for it decreases. We also see that the equilibrium for the second queue decreases because the probability of joining the second queue decreases when the delayed length of the first queue decreases. 

In Figure \ref{Fig4}, the only perturbed parameter is $\theta$. Perturbing this parameter positively causes the probability of joining the first queue to decrease and the probability of joining the second queue to increase which gives us some intuition for why we see the equilibrium for the first queue decrease and the equilibrium for the second queue increase.

In Figure \ref{Fig5}, the only perturbed parameter is $\alpha$. Perturbing this parameter positively causes the probability of joining the first queue to increase and the probability of joining the second queue to decrease, which causes the equilibrium for the first queue to increase and the equilibrium for the second queue to decrease. 

We see that the effects of perturbing $\alpha$ are the opposite of the effects of perturbing $\theta$. Similarly, perturbing $\lambda$ seems to qualitatively affect the equilibrium in a way opposite to how perturbing $\mu$ does.

In Figure \ref{Fig6}, all four of the aforementioned parameters are perturbed and in this case the resulting behavior depends on how big the permutations are for each parameter. 

We summarized the various values used in these figures along with the resulting equilibrium values and approximations and errors in Table \ref{Table1}. A natural concern is how the error of the equilibrium approximation varies as a parameter is perturbed by varying amounts. We explore this by varying $\hat{\lambda}$ while keeping other parameters fixed. The results can be seen in Table \ref{Table2} and they are plotted on the left side of Figure \ref{Fig_amp_error}. We do the same for $\hat{\mu}$ in Table \ref{Table3} and we plotted those results in the right side of Figure \ref{Fig_amp_error}. In both cases, we see that the error increases as the parameter is perturbed more. This is expected as we expect our approximation to get its best results when the perturbations are small.



\begin{table}[] 
\begin{tabular}{ | l | l | l | l | l | l | l | l | l | l | l | l | l | l | l | l |}
\hline\\ [-2.5ex]
$\lambda$ & $\hat{\lambda}$ & $\mu$ & $\hat{\mu}$ & $\theta$ & $\hat{\theta}$ & $\alpha$ & $\hat{\alpha}$ & $\epsilon$ &  $\hat{q}_1$ & $q_1$ & $\hat{q}_1$ error &  $\hat{q}_2$ &  $q_2$ & $\hat{q}_2$ error \\
\hline
10         & 1               & 1     & 0           &    1      &       0         &     0     &      0  &   0.1     & $5.0292$   &  $5.0290$  & $2 \cdot 10^{-4}$ & 5.0208 & 5.0207 & $1 \cdot 10^{-4}$   \\
\hline
10         & 0               & 1     & 0.1           &    1      &       0         &     0     &      0   & 0.1       & $4.9708$   &  $ 4.9710$ & $2 \cdot 10^{-4}$ & 4.9792 & 4.9793  & $1 \cdot 10^{-4}$  \\
\hline
10         & 0               & 1     & 0           &    1      &       1         &     0     &      0    & 0.1      & $ 4.7917$ &  $4.8000$   & 0.0083 & 5.2084  & 5.2000 & 0.0084 \\
\hline
10         & 0               & 1     & 0           &    1      &       0         &     0     &      1 & 0.1         & $5.0417$ &  $ 5.0417 $   & 0.0000 & 4.9583 & 4.9583 & 0.0000 \\
\hline
10         & 1               & 1     & 0.1          &    1      &       1         &     0     &      1 & 0.1         & $ 4.8333$&  $ 4.8400$   & 0.0067 & 5.1667 & 5.1600 & 0.0067 \\
\hline
\end{tabular}
\caption{The analytical expression for the first-order approximation of equilibrium, $\hat{q}$, compared against numerical integration, $q$, for various parameter values}
\label{Table1}
\end{table}

\begin{table}[]
\begin{center}
\begin{tabular}{| l | l | l | l | l | l | l |}
\hline\\ [-2.5ex]
$\hat{\lambda}$ & $\hat{q}_1$ & $q_1$ & $\hat{q}_1$ error & $\hat{q}_2$ & $q_2$ & $\hat{q}_2$ error\\
\hline
1 & $ 5.0292$ & $5.0290$ & $2 \cdot 10^{-4}$ & 5.0208  &  5.0207  & $1 \cdot 10^{-4}$  \\
\hline 
1.5 & $ 5.0438$  & $5.0435$& $3 \cdot 10^{-4}$ & 5.0313 &  5.0311 & $2 \cdot 10^{-4}$  \\
\hline 
2 & $ 5.0583 $  & $ 5.0578 $ & $5 \cdot 10^{-4}$ &  5.0417 & 5.0413  & $4 \cdot 10^{-4}$  \\
\hline 
2.5 & $ 5.0729 $ & $ 5.0722 $ &  $7 \cdot 10^{-4}$ & 5.0521  &  5.0515 & $6 \cdot 10^{-4}$  \\
\hline 
3 & $ 5.0875$  & $ 5.0864 $ & 0.0011 & 5.0625  &  5.0617 &  $8 \cdot 10^{-4}$   \\
\hline 
3.5 & $ 5.1021 $ & $ 5.1006 $ &    0.0015 & 5.0729 & 5.0719 &   0.0010 \\
\hline 
4 & $ 5.1167 $ & $ 5.1147 $ & 0.0020 & 5.0833 & 5.0820 & 0.0013\\
\hline 
4.5 & $ 5.1313$ & $5.1288 $ & 0.0025 & 5.0938 & 5.0920 & 0.0018\\
\hline 
5 & $ 5.1458$ & $ 5.1429\ $ & 0.0029 & 5.1042  & 5.1020  &  0.0022\\
\hline 
\end{tabular}
\end{center}
\caption{How the approximation of the equilibrium point, $\hat{q}$, compares to numerical integration, $q$, as $\hat{\lambda}$ varies while fixing  $\lambda = 10, \mu = 1, \theta = 1, \alpha = 0, \epsilon = .01, \hat{\mu} = \hat{\theta} = \hat{\alpha} = 0$}
\label{Table2}
\end{table}

\begin{table}[]
\begin{center}
\begin{tabular}{| l | l | l | l | l | l | l |}
\hline\\ [-2.5ex]
$\hat{\mu}$ & $\hat{q}_1$ & $q_1$ & $\hat{q}_1$ error & $\hat{q}_2$ & $q_2$ & $\hat{q}_2$ error\\
\hline
0.1 & $4.9708 $  & $ 4.9710 $ & $2 \cdot 10^{-4}$ &  4.9792  &  4.9793 & $1 \cdot 10^{-4}$\\
\hline 
0.15 & $ 4.9562 $  & $ 4.9566 $& $4 \cdot 10^{-4}$ &  4.9688  & 4.9690  &  $2 \cdot 10^{-4}$\\
\hline 
0.2 & $ 4.9417 $ & $ 4.9423 $ & $6 \cdot 10^{-4}$ &  4.9583  & 4.9588  & $5 \cdot 10^{-4}$\\
\hline 
0.25 & $ 4.9271 $ & $ 4.9281 $ & 0.001 & 4.9479   & 4.9487  & $8 \cdot 10^{-4}$\\
\hline 
0.3 & $ 4.9125 $ & $ 4.9140 $ & 0.0015 & 4.9375   & 4.9386  & 0.0011\\
\hline 
0.35 & $ 4.8979 $ & $ 4.9000  $ & 0.0021 & 4.9271   & 4.9285  & 0.0014\\
\hline 
0.4 & $ 4.8833  $ & $ 4.8860  $ & 0.0027 & 4.9167   & 4.9186  & 0.0019\\
\hline 
0.45 & $ 4.8688 $ & $ 4.8721 $ & 0.0033 &  4.9063  & 4.9087  & 0.0024\\
\hline 
0.5 & $4.8542  $ & $ 4.8583 $ & 0.0041 &   4.8958  &  4.8988 & 0.0030 \\
\hline 
\end{tabular}
\end{center}
\caption{How the approximation of the equilibrium point, $\hat{q}$, compares to numerical integration, $q$, as $\hat{\mu}$ varies while fixing  $\lambda = 10, \mu = 1, \theta = 1, \alpha = 0, \epsilon = .01, \hat{\mu} = \hat{\theta} = \hat{\alpha} = 0$}
\label{Table3}
\end{table}

\begin{figure}[ht!]
\vspace{-45mm}
  \hspace{-21mm}~\includegraphics[scale=.55]{./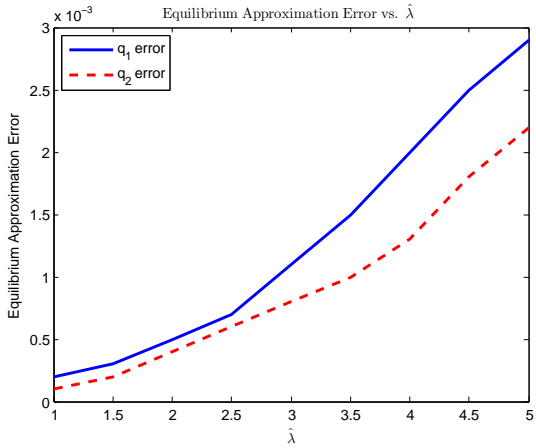}~\hspace{-40mm}~\includegraphics[scale=.55]{./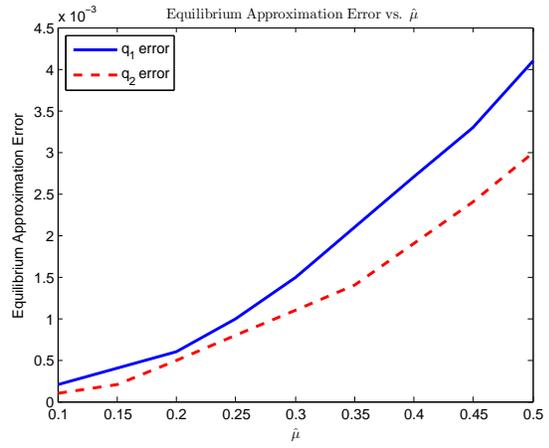}
\captionsetup{justification=centering,margin=2cm}
\vspace{-55mm}
  \caption{Plots of the error of the equilibrium approximation against $\hat{\lambda}$ and $\hat{\mu}$ from Tables \ref{Table2} (Left) and \ref{Table3} (Right)}
\label{Fig_amp_error}
\end{figure}

\begin{figure}[ht!]
\vspace{-35mm}
  \hspace{-5mm}~\includegraphics[scale=.4]{./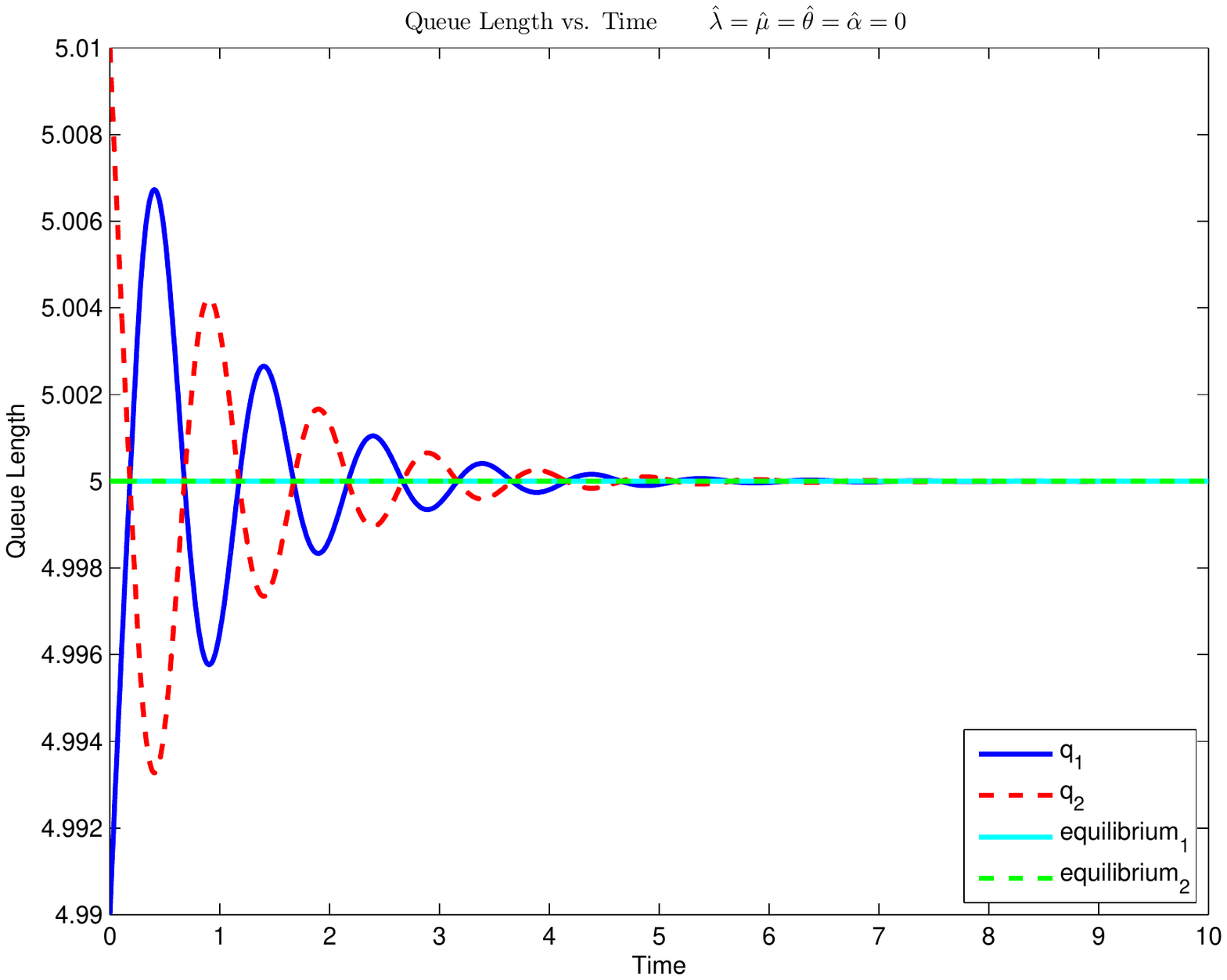}~\hspace{-10mm}~\includegraphics[scale=.4]{./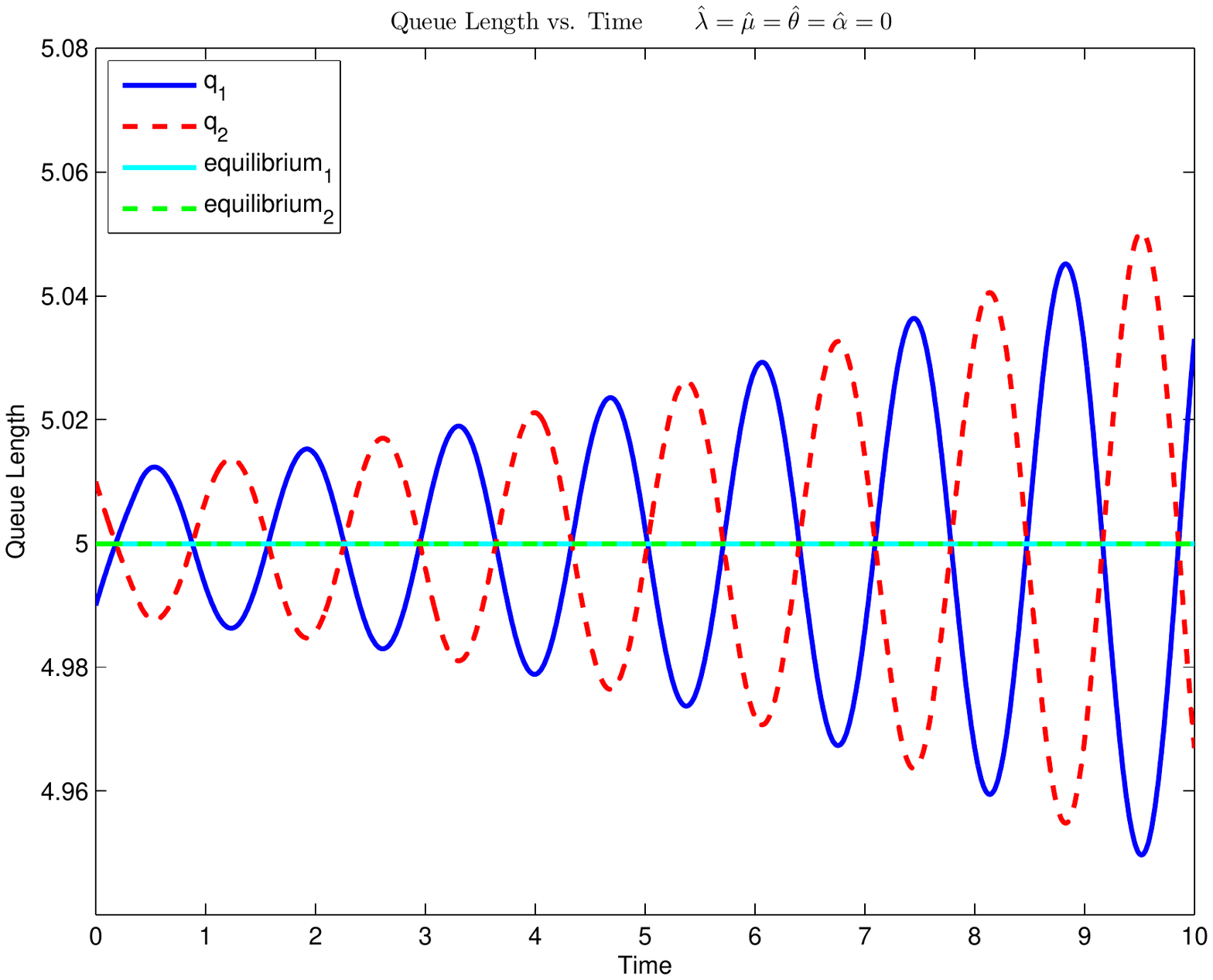}
\captionsetup{justification=centering,margin=2cm}
\vspace{-30mm}
  \caption{$\hat{\lambda} = \hat{\mu} = \hat{\theta} = \hat{\alpha} = 0$, $\lambda=10, \mu=1, \theta=1, \alpha=0$\\ On $[-\Delta, 0]$, $q_1 = 4.99$ and $q_2 = 5.01$,
Left: $\Delta = .25$,
Right: $\Delta = .4$}
\label{Fig1}
\end{figure}

\begin{figure}[ht!]
\vspace{-35mm}
  \hspace{-5mm}~\includegraphics[scale=.4]{./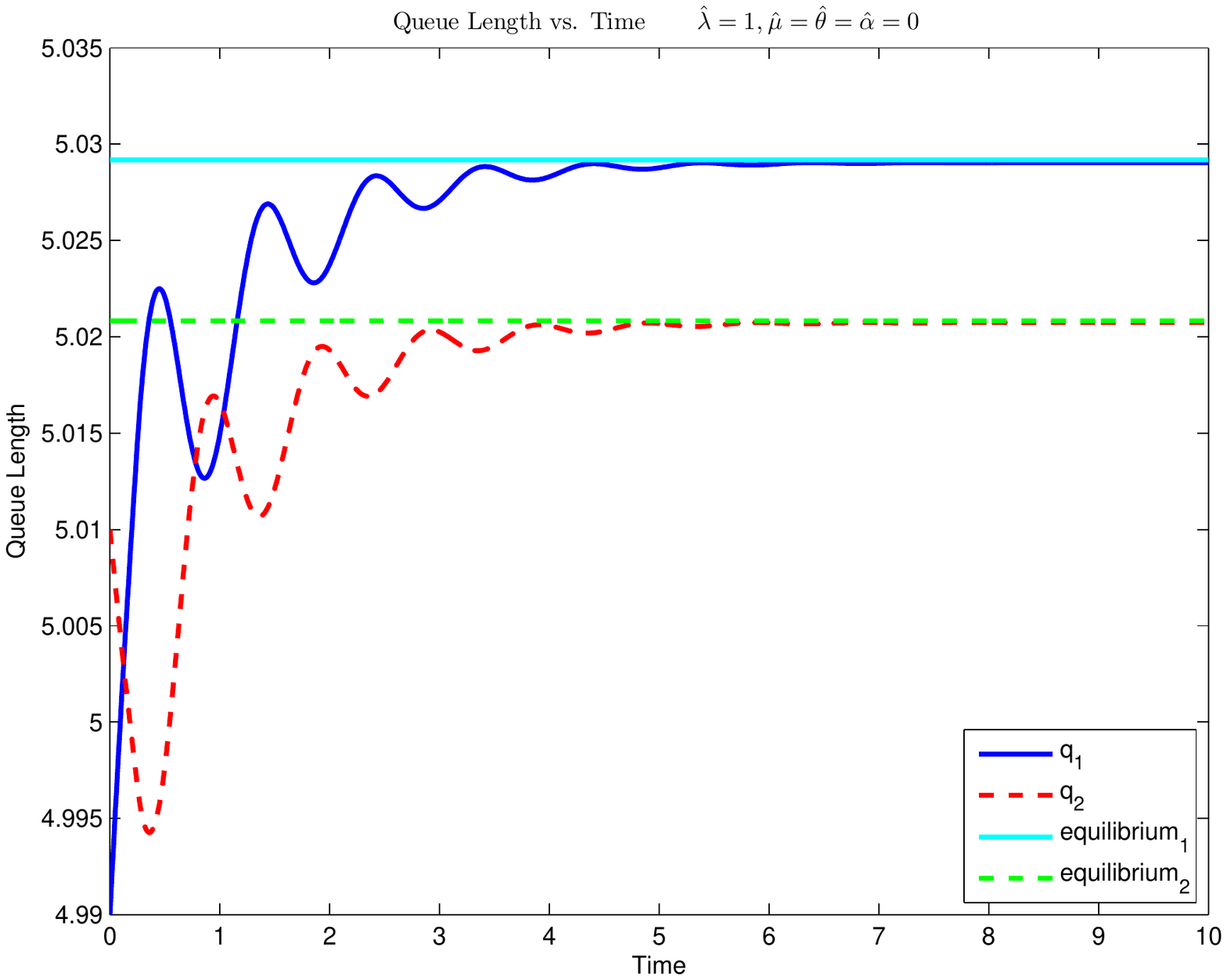}~\hspace{-10mm}~\includegraphics[scale=.4]{./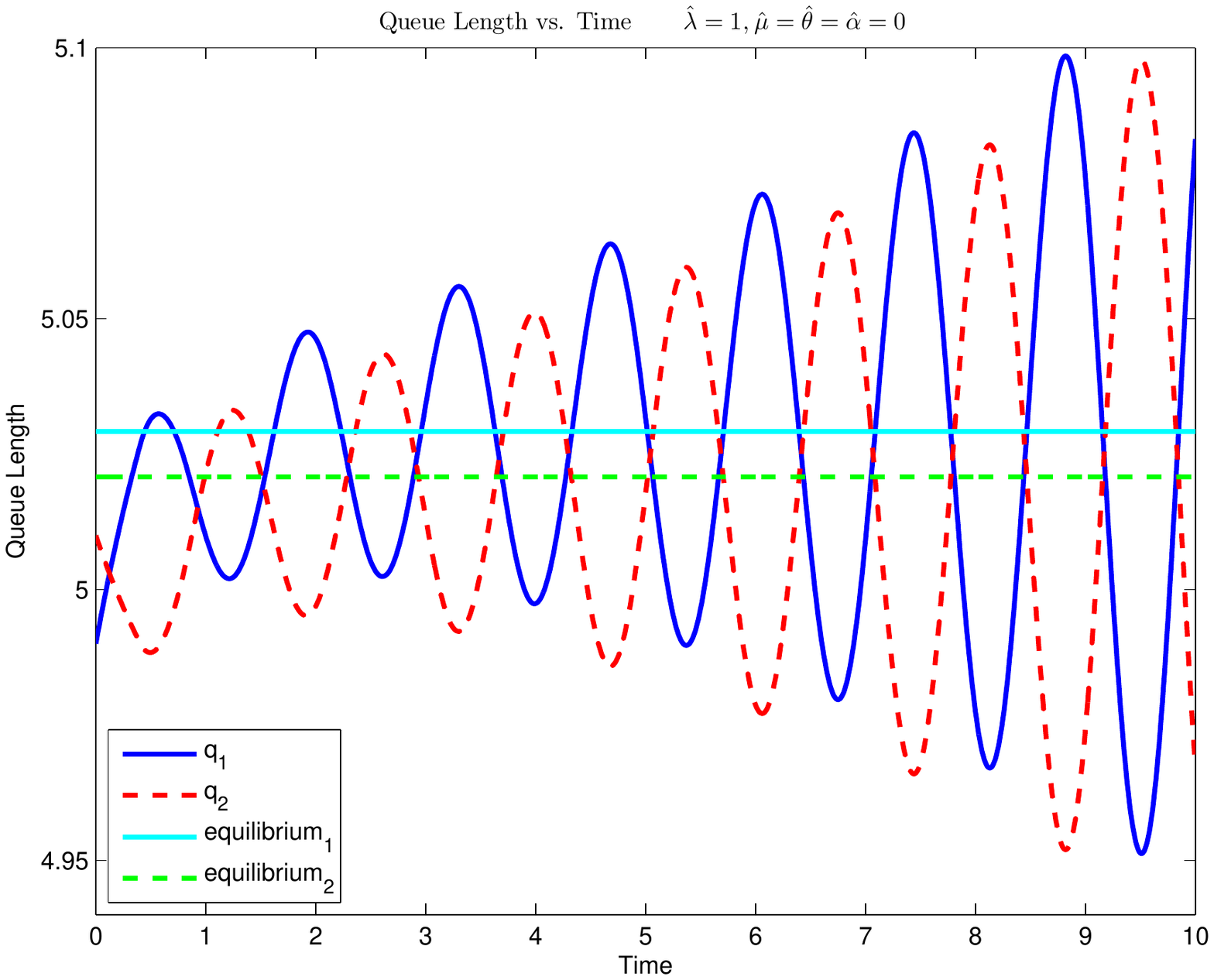}
\captionsetup{justification=centering,margin=2cm}
\vspace{-30mm}
  \caption{$\hat{\lambda} = 1, \hat{\mu} = \hat{\theta} = \hat{\alpha} = 0$, $\epsilon = .1, \lambda=10, \mu=1, \theta=1, \alpha=0$\\ On $[-\Delta, 0]$, $q_1 = 4.99$ and $q_2 = 5.01$,
Left: $\Delta = .25$,
Right: $\Delta = .4$}
\label{Fig2}
\end{figure}

%


%


\begin{figure}[ht!]
\vspace{-35mm}
  \hspace{-5mm}~\includegraphics[scale=.4]{./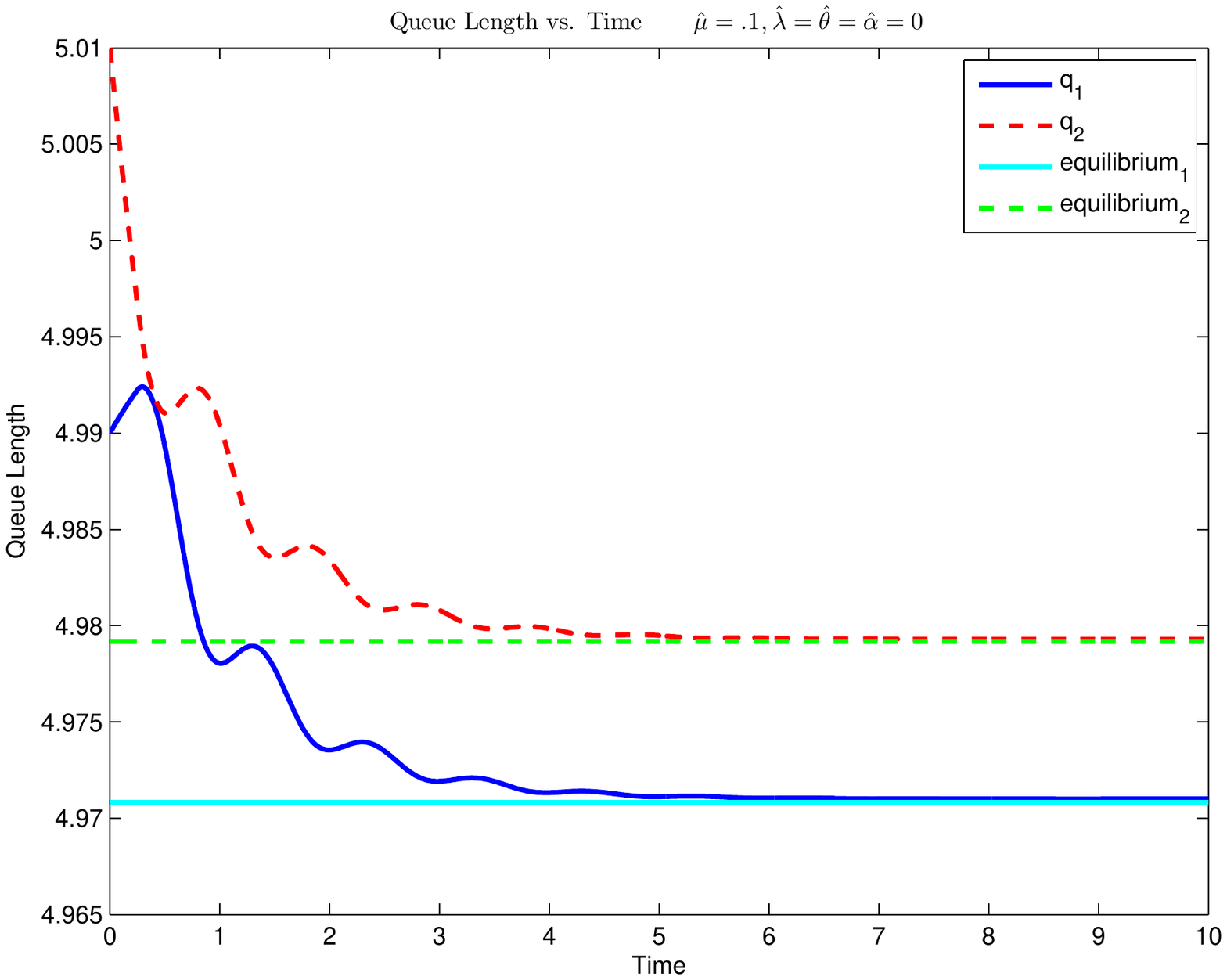}~\hspace{-10mm}~\includegraphics[scale=.4]{./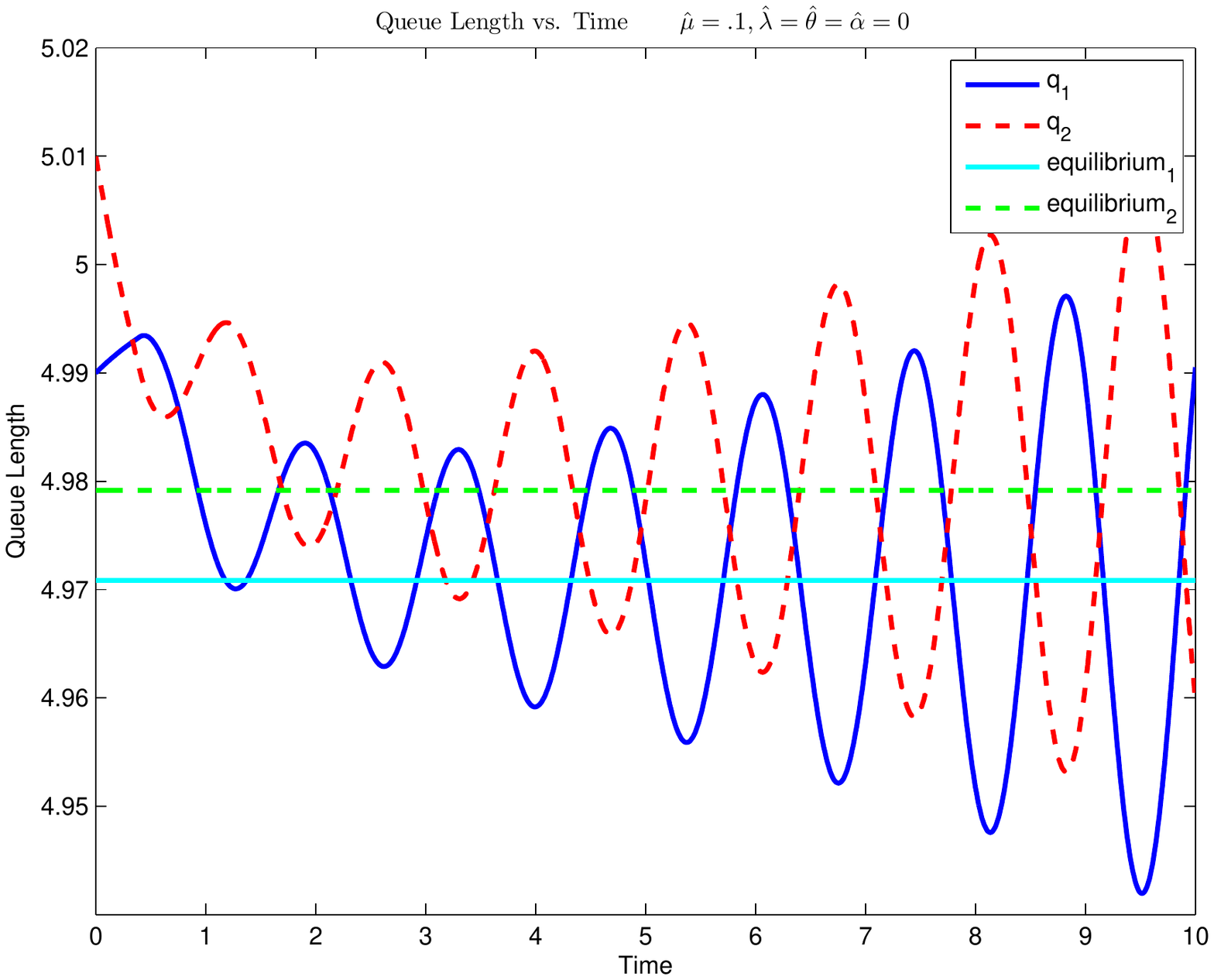}
\captionsetup{justification=centering,margin=2cm}
\vspace{-30mm}
  \caption{$\hat{\mu} = 1, \hat{\lambda}= \hat{\theta} = \hat{\alpha} = 0$, $\epsilon = .1, \lambda=10, \mu=1, \theta=1, \alpha=0$\\ On $[-\Delta, 0]$, $q_1 = 4.99$ and $q_2 = 5.01$,
Left: $\Delta = .25$,
Right: $\Delta = .4$}
\label{Fig3}
\end{figure}

%


%


\begin{figure}[ht!]
\vspace{-35mm}
  \hspace{-5mm}~\includegraphics[scale=.4]{./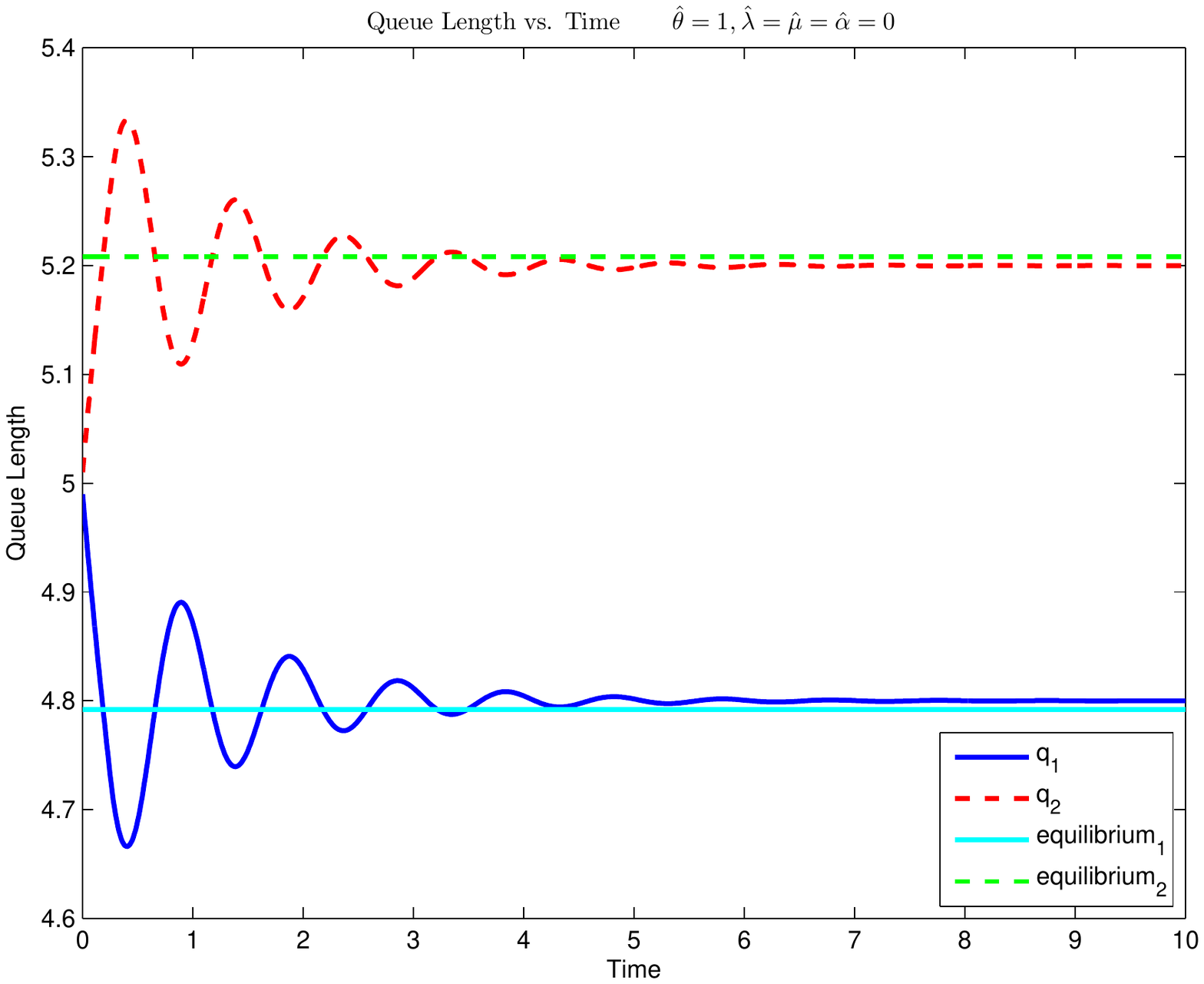}~\hspace{-10mm}~\includegraphics[scale=.4]{./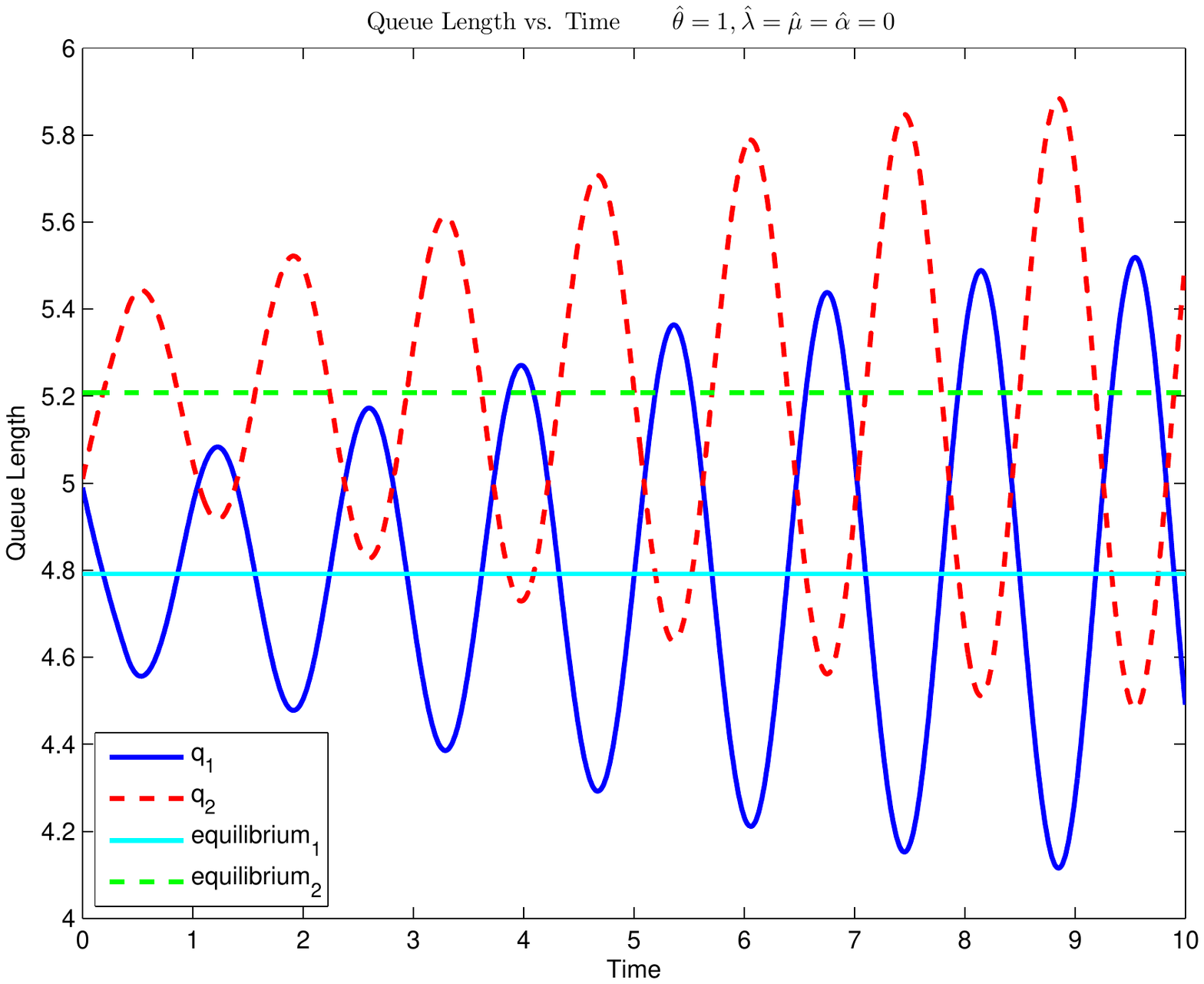}
\captionsetup{justification=centering,margin=2cm}
\vspace{-30mm}
  \caption{$\hat{\theta}=1, \hat{\lambda} = \hat{\mu}  = \hat{\alpha} = 0$, $\epsilon = .1, \lambda=10, \mu=1, \theta=1, \alpha=0$\\ On $[-\Delta, 0]$, $q_1 = 4.99$ and $q_2 = 5.01$,
Left: $\Delta = .25$,
Right: $\Delta = .4$}
\label{Fig4}
\end{figure}


%
%

\begin{figure}[ht!]
\vspace{-35mm}
  \hspace{-5mm}~\includegraphics[scale=.4]{./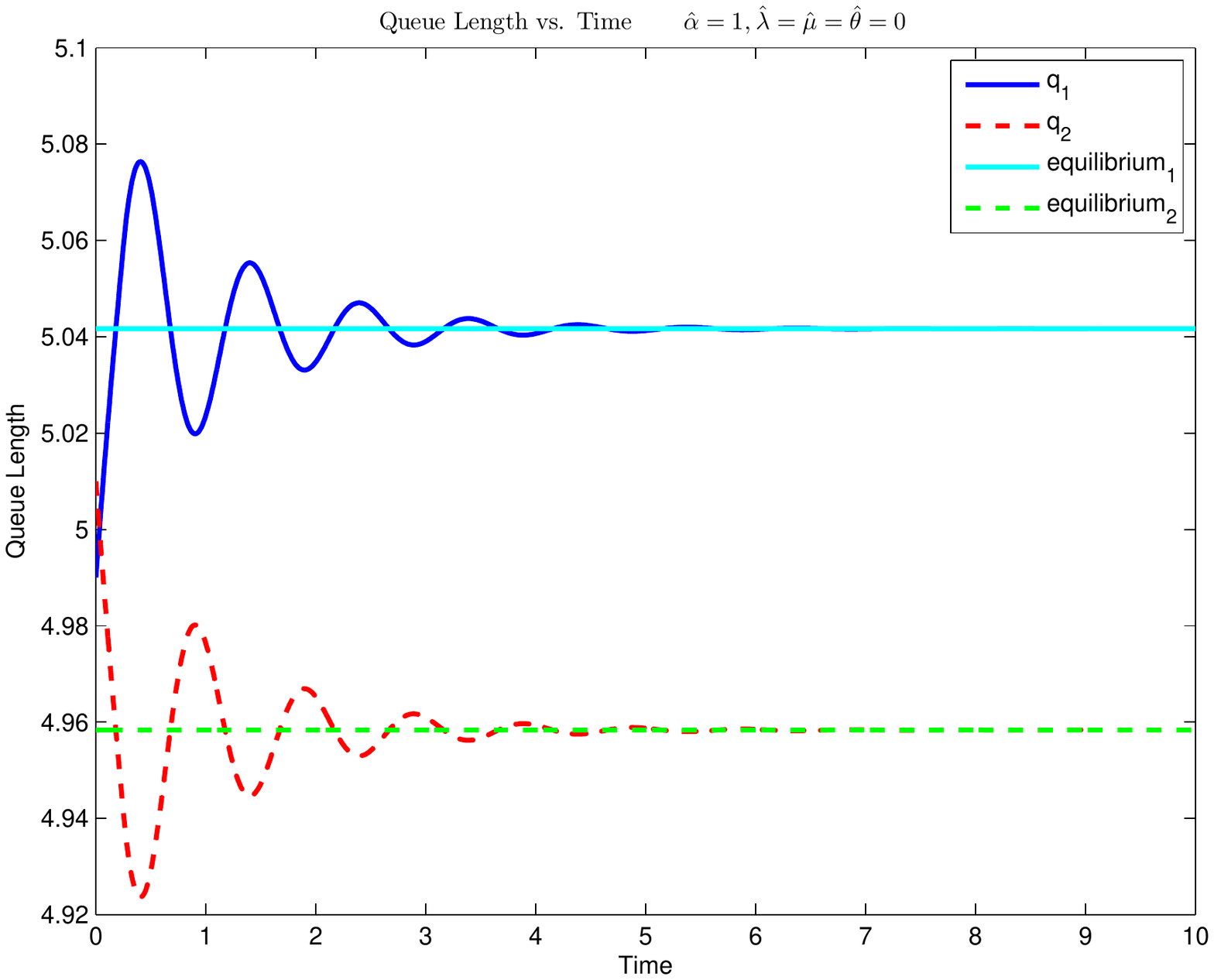}~\hspace{-10mm}~\includegraphics[scale=.4]{./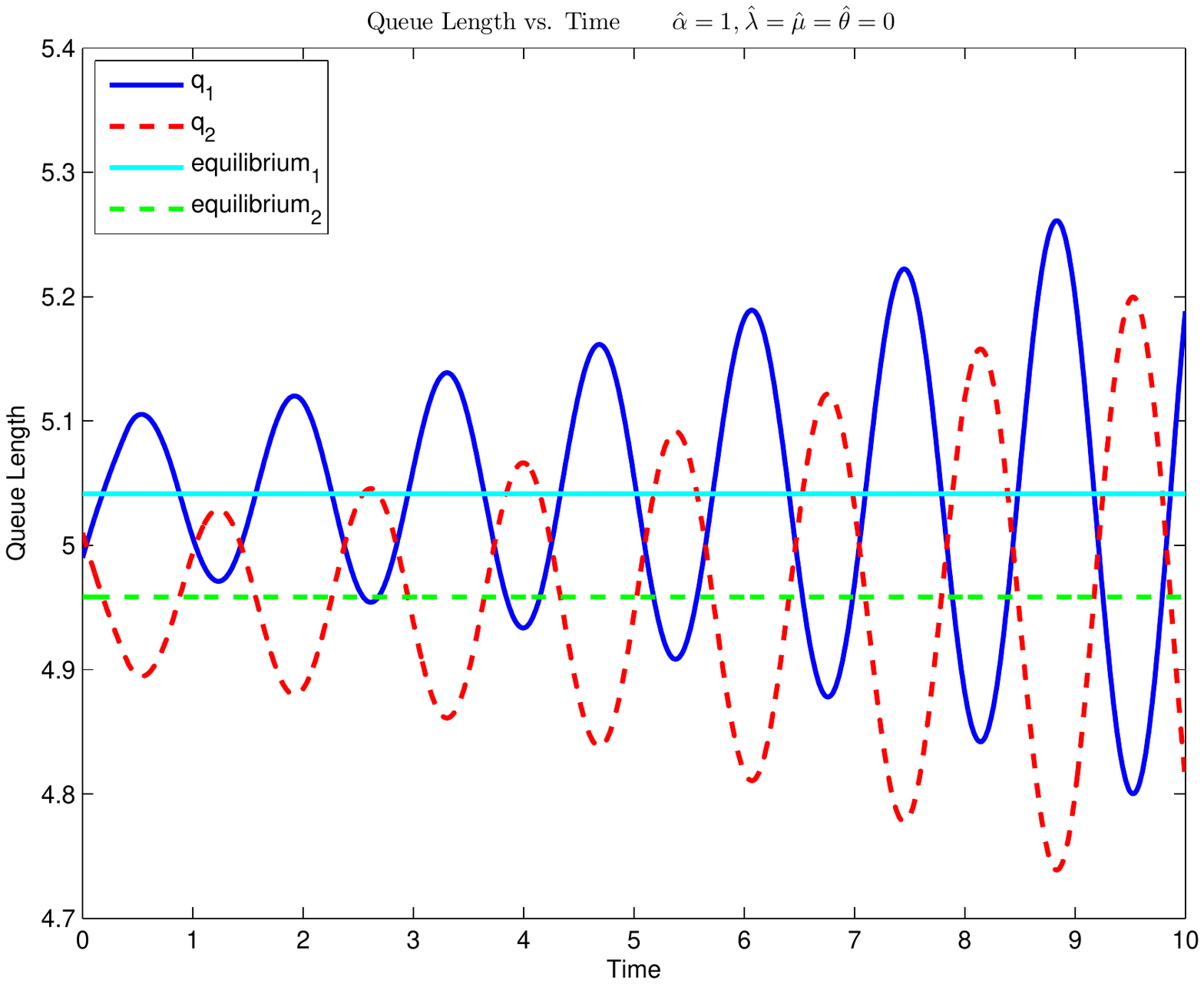}
\captionsetup{justification=centering,margin=2cm}
\vspace{-30mm}
  \caption{$\hat{\alpha} = 1, \hat{\lambda} = \hat{\mu} = \hat{\theta} = 0$, $\epsilon = .1, \lambda=10, \mu=1, \theta=1, \alpha=0$\\ On $[-\Delta, 0]$, $q_1 = 4.99$ and $q_2 = 5.01$,
Left: $\Delta = .25$,
Right: $\Delta = .4$}
\label{Fig5}
\end{figure}

%


%


\clearpage
\clearpage
\clearpage
\clearpage
\clearpage
\clearpage
\clearpage
\clearpage
\clearpage
\clearpage
\clearpage
\clearpage
\clearpage
\clearpage
\clearpage
\clearpage
\clearpage
\clearpage
\clearpage
\clearpage
\clearpage
\clearpage
\clearpage
\clearpage
\clearpage
\clearpage

\begin{figure}[ht!]
\vspace{-35mm}
  \hspace{-5mm}~\includegraphics[scale=.4]{./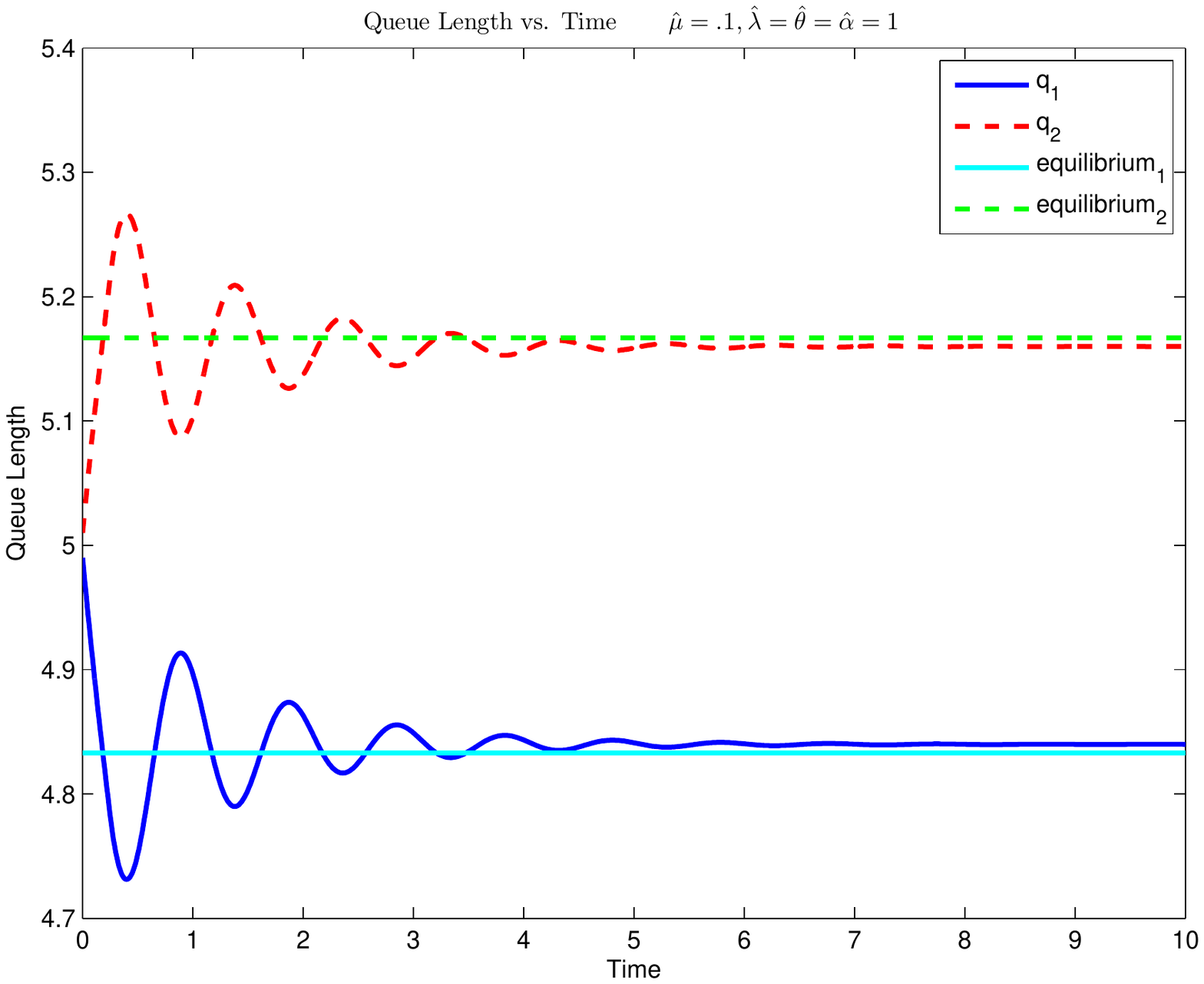}~\hspace{-10mm}~\includegraphics[scale=.4]{./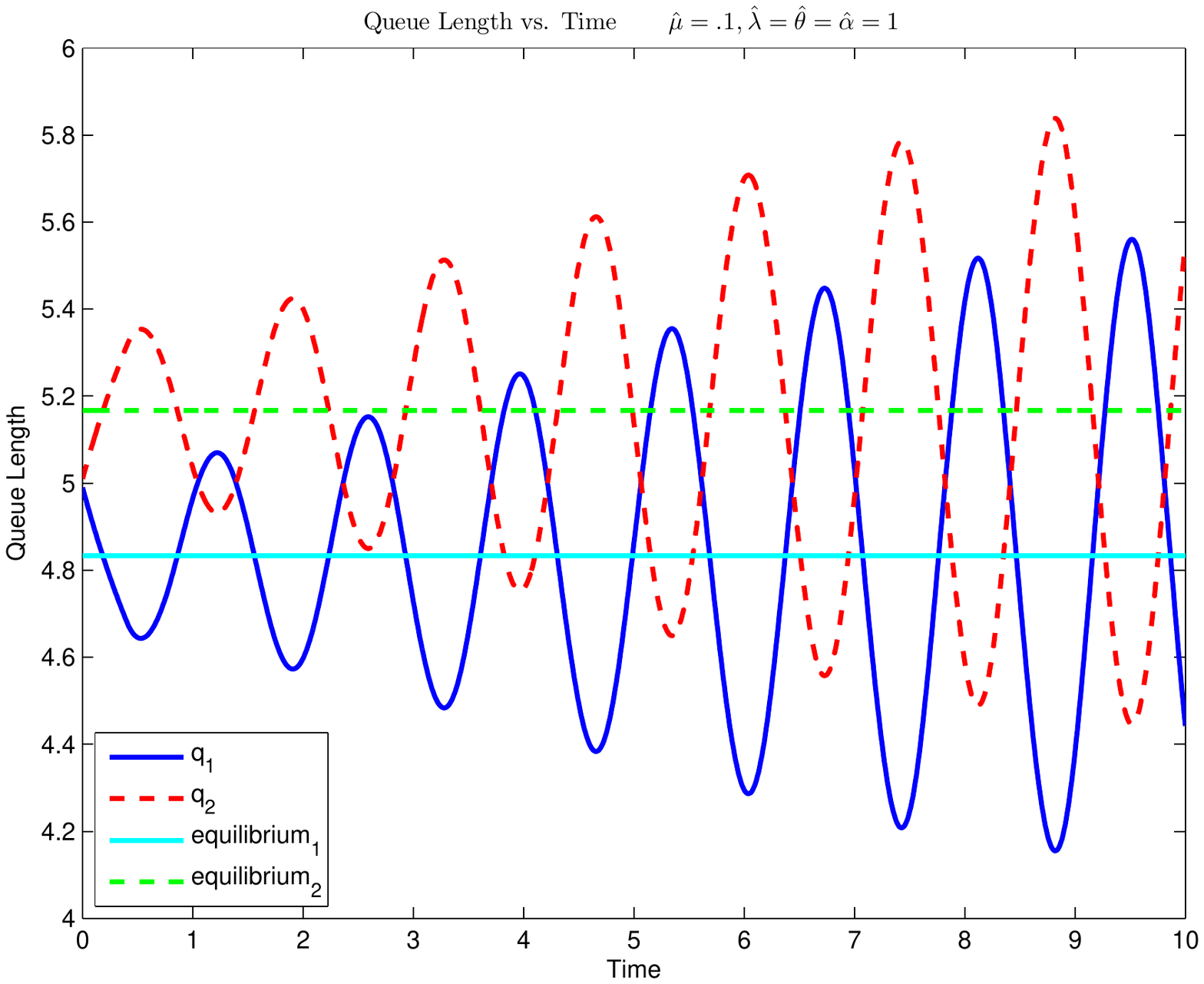}
\captionsetup{justification=centering,margin=2cm}
\vspace{-30mm}
  \caption{$\hat{\mu} = 0.1, \hat{\lambda} = \hat{\theta} = \hat{\alpha} = 1$, $\epsilon = .1, \lambda=10, \mu=1, \theta=1, \alpha=0$\\ On $[-\Delta, 0]$, $q_1 = 4.99$ and $q_2 = 5.01$,
Left: $\Delta = .25$,
Right: $\Delta = .4$}
\label{Fig6}
\end{figure}

%


\section{Hopf Bifurcation of Asymmetric Model} \label{sec_delta_mod}

Now that we have derived an approximate equilibrium for our asymmetric queueing model, we can now analyze the stability of this approximate equilibrium. In the symmetric model,  \citet{novitzky2019nonlinear} shows that if $\lambda \theta > 2 \mu$, then the symmetric system given in Equations \ref{symmetric equation 1}-\ref{symmetric equation 2} will exhibit a Hopf bifurcation for values of $\Delta > \Delta_{\text{cr}}$ where $$\Delta_{\text{cr}} = \frac{ \arccos\left( \frac{-2 \mu}{\lambda \theta} \right)}{\omega_{\text{cr}}} \hspace{5mm} \text{ and } \hspace{5mm} \omega_{\text{cr}} = \frac{1}{2} \sqrt{\lambda^2 \theta^2 - 4 \mu^2}.$$ Our goal in this section is to derive an analogous critical delay expression for the asymmetric model, which we will denote as $\Delta_{\text{mod}}$. We will show that the new critical delay, $\Delta_{\text{mod}}$, marks a change in stability for the queueing model and we verify this result using numerical integration of DDEs.  This means that we show from numerical integration that a limit cycle is born at this modified critical value of $\Delta_{mod}$. Our analysis for deriving the approximate critical delay makes use of the method of multiple scales.  We show this result below in Theorem \ref{hopf_eqn}

\begin{theorem} \label{hopf_eqn}
If $ \lambda \theta> 2 \mu$, then, for sufficiently small $\epsilon$, the stability of the queueing system given in Equations \ref{perturbed equation 1}-\ref{perturbed equation 2} changes when $\Delta = \Delta_{\text{mod}}$ where 

\begin{align*}
\Delta_{\text{mod}} &= \Delta_{\text{cr}} - \epsilon \left( \frac{\mu + \Delta_{\text{cr}}(\mu^2 + \omega_{\text{cr}}^2)}{2 \lambda \omega_{\text{cr}}^2} \hat{\lambda} - \frac{1 + \mu \Delta_{\text{cr}}}{2 \omega_{\text{cr}}^2} \hat{\mu} + \frac{\mu + \Delta_{\text{cr}}(\mu^2 + \omega_{\text{cr}}^2)}{2 \theta \omega_{\text{cr}}^2} \hat{\theta} \right) + O(\epsilon^2).
\end{align*}

\label{delta mod theorem}

\end{theorem}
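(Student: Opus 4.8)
The plan is to reduce the assertion to a linear stability analysis of the asymmetric system about its equilibrium, and then to run a regular perturbation expansion of the characteristic equation in $\epsilon$. First I would linearize Equations \ref{perturbed equation 1}--\ref{perturbed equation 2} about the equilibrium whose first-order expansion $\left(\tfrac{\lambda}{2\mu}+a\epsilon,\ \tfrac{\lambda}{2\mu}+b\epsilon\right)$ is given by Theorem \ref{equilibrium_theorem}. The key simplification is to write each choice probability as a logistic function $\sigma(z)=(1+e^{-z})^{-1}$ of the single ``tilt'' variable $z=-(\theta+\epsilon\hat\theta)\,q_1(t-\Delta)+\theta\,q_2(t-\Delta)+\epsilon\hat\alpha$. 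At equilibrium this tilt is itself $O(\epsilon)$, and since $\sigma''(0)=0$ one gets $\sigma'(z^{*})=\tfrac14+O(\epsilon^2)$; hence $\hat\alpha$ and the precise equilibrium shift affect the linearization only at $O(\epsilon^2)$. The upshot is a linear DDE $\dot u(t)=A\,u(t)+B\,u(t-\Delta)$ with $A=\operatorname{diag}(-(\mu+\epsilon\hat\mu),\,-\mu)$ and $B$ an explicit $2\times2$ matrix whose entries are degree-one polynomials in $\epsilon$ built from $\tfrac{\lambda\theta}{4}$ and the perturbations $\hat\lambda,\hat\mu,\hat\theta$.

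Next I would form the characteristic equation $\det\!\big(rI-A-Be^{-r\Delta}\big)=0$ and expand it to first order in $\epsilon$. At $\epsilon=0$ it is a difference of squares that factors into a trivially stable common mode ($r=-\mu$) and the symmetric-model equation $r+\mu+\tfrac{\lambda\theta}{2}e^{-r\Delta}=0$ analyzed in \citet{novitzky2019nonlinear}. Following the branch whose $\epsilon=0$ root is the one that crosses the imaginary axis, and dividing the perturbed determinant by the factor that does not vanish there (of value $\approx-\tfrac{\lambda\theta}{2}e^{-r\Delta}$), reduces the $2\times2$ problem, up to $O(\epsilon^2)$, to the scalar transcendental equation
\[
r+\mu+\frac{\lambda\theta}{2}\,e^{-r\Delta}+\epsilon\left(\frac{\hat\mu}{2}+\frac{\hat\lambda\theta+\lambda\hat\theta}{4}\,e^{-r\Delta}\right)=O(\epsilon^2).
\]

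Then I would set $r=i\omega$ and substitute $\omega=\omega_{\mathrm{cr}}+\epsilon\omega_1+O(\epsilon^2)$ and $\Delta=\Delta_{\mathrm{cr}}+\epsilon\Delta_1+O(\epsilon^2)$. The $O(1)$ real and imaginary parts recover $\omega_{\mathrm{cr}},\Delta_{\mathrm{cr}}$ together with the identities $\cos(\omega_{\mathrm{cr}}\Delta_{\mathrm{cr}})=-\tfrac{2\mu}{\lambda\theta}$, $\sin(\omega_{\mathrm{cr}}\Delta_{\mathrm{cr}})=\tfrac{2\omega_{\mathrm{cr}}}{\lambda\theta}$, and $\tfrac{\lambda^2\theta^2}{4}=\mu^2+\omega_{\mathrm{cr}}^2$. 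The $O(\epsilon)$ real and imaginary parts then form a $2\times2$ linear system for $(\omega_1,\Delta_1)$; solving for $\Delta_1$ and simplifying with those identities should collapse it into the three-term expression claimed for $\Delta_{\mathrm{mod}}$ — in particular with no $\hat\alpha$ dependence, consistent with the first paragraph. That $\Delta=\Delta_{\mathrm{mod}}$ is a genuine stability boundary, and not a spurious root, follows from transversality: the symmetric system has a simple, transversal crossing of the imaginary axis, so by continuity so does the perturbed system for small $\epsilon$, while every other characteristic root stays strictly in the left half-plane for $\Delta$ near $\Delta_{\mathrm{cr}}$ and $\epsilon$ small. One can equivalently obtain the same $\Delta_{\mathrm{mod}}$ via the method of multiple scales, detuning $\Delta$ off $\Delta_{\mathrm{cr}}$ by an $O(\epsilon)$ amount and reading off the solvability condition.

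The difficulty here is bookkeeping rather than conceptual. One must carefully retain precisely the $O(\epsilon)$ terms that survive — notably confirming $\sigma'(z^{*})=\tfrac14+O(\epsilon^2)$ so that $\hat\alpha$ disappears, and checking that the scalar reduction tracks the correct root branch — and then push through the algebraic simplification of $\Delta_1$, using the zeroth-order trigonometric identities to recognize the combination $\mu+\Delta_{\mathrm{cr}}(\mu^2+\omega_{\mathrm{cr}}^2)$ that multiplies $\hat\lambda$ and $\hat\theta$.
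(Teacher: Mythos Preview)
Your proposal is correct and arrives at the stated $\Delta_{\text{mod}}$, but the route is genuinely different from the paper's. The paper does not perturb the characteristic equation directly. Instead, after obtaining the same linearized $(v_1,v_2)$ system you describe, it applies the method of multiple scales: it introduces a slow time $\eta=\epsilon t$, expands $v_i=v_{i,0}+\epsilon v_{i,1}$ and $\Delta=\Delta_{\text{cr}}+\epsilon\Delta_1$, and then removes secular terms from the $O(\epsilon)$ equation for $v_{2,1}$. This produces a two-dimensional slow-flow system $\dot A=K_1A-K_3B$, $\dot B=K_3A+K_1B$; the sign of the real eigenvalue $K_1$ is then traced back to an inequality on $\Delta_1$, yielding the formula. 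Your approach is more elementary and transparent: the observation that the off-diagonal coupling in the $(v_1,v_2)$ characteristic determinant is $O(\epsilon)$, hence contributes only at $O(\epsilon^2)$, immediately reduces the problem to the scalar equation $r+\mu+\tfrac{\hat\mu}{2}\epsilon+\bigl(\tfrac{\lambda\theta}{2}+\tfrac{\theta\hat\lambda+\lambda\hat\theta}{4}\epsilon\bigr)e^{-r\Delta}=O(\epsilon^2)$, and a standard imaginary-axis expansion finishes the job. The paper in fact invokes exactly this characteristic-equation argument \emph{after} its proof, but only for the special case $\hat\mu=\hat\theta=0$, presenting it as intuition rather than as the main derivation. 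Your logistic-derivative argument for why $\hat\alpha$ drops out at first order is also cleaner than the paper's, which simply observes its absence after the linearization. What the paper's heavier machinery buys is reusability: the same multiple-scales framework is carried forward in Section~\ref{sec_amplitude} to compute limit-cycle amplitudes via Lindstedt's method, whereas your characteristic-equation route would need to be supplemented there.
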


\begin{proof}

We begin by linearizing the system of Equations \ref{perturbed equation 1}-\ref{perturbed equation 2} about the approximate equilibrium point $$(q_1^*, q_2^*) = \left(\frac{\lambda}{2 \mu} + a\epsilon + O(\epsilon^2), \frac{\lambda}{2 \mu} + b \epsilon + O(\epsilon^2)\right)$$ where $a$ and $b$ are as defined in Theorem 2.1. In doing so, we introduce the functions $\tilde{u}_1(t)$ and $\tilde{u}_2(t)$ so that $$q_1(t) = q_1^* + \tilde{u}_1(t), \hspace{5mm} q_2(t) = q_2^* + \tilde{u}_2(t)$$ and we approximate $\overset{\bullet}{\tilde{u}}_1$ and $\overset{\bullet}{\tilde{u}}_2$ by a linear Taylor expansion about the equilibrium point $\tilde{u}_1(t) = \tilde{u}_2(t) = \tilde{u}_1(t - \Delta) = \tilde{u}_2(t - \Delta) = 0$ and we denote the linear approximations by $\overset{\bullet}{u}_1(t)$ and $\overset{\bullet}{u}_2(t)$, respectively, and we Taylor expand coefficients with nonlinear dependence on $\epsilon$ about $\epsilon = 0$ and neglect terms that are $O(\epsilon^2)$ yielding the following first-order (in $\epsilon$) approximation of the linear system.

\begin{align}
\overset{\bullet}{u}_1(t) &= \frac{(\lambda + \hat{\lambda} \epsilon) \theta}{4}\left[ u_2(t - \Delta) - u_1(t - \Delta) \right] - \frac{\lambda \hat{\theta}}{4} \epsilon u_1(t - \Delta) - (\mu + \hat{\mu} \epsilon) u_1(t) \label{u1 equation 45}\\
\overset{\bullet}{u}_2(t) &= \frac{\lambda \theta}{4} [u_1(t- \Delta) - u_2(t - \Delta)] + \frac{\lambda \hat{\theta}}{4} \epsilon u_1(t - \Delta) - \mu u_2(t) \label{u2 equation 46}
\end{align}

\noindent We then proceed by making the change of variables $$v_1(t) = u_1(t) + u_2(t), \hspace{5mm} v_2(t) = u_1(t) - u_2(t)$$ to get the following system

\begin{align}
\overset{\bullet}{v}_1(t) + \left( \mu + \frac{\hat{\mu}}{2} \epsilon \right) v_1(t) &= - \frac{\theta \hat{\lambda}}{4}\epsilon v_2(t - \Delta) - \frac{\hat{\mu}}{2} \epsilon v_2(t) \label{v1 equation 1}\\
\overset{\bullet}{v}_2(t) + \left( \frac{\lambda \theta}{2} + \frac{\theta \hat{\lambda}}{4} \epsilon + \frac{\lambda \hat{\theta}}{4} \epsilon \right) v_2(t - \Delta) + \left( \mu + \frac{\hat{\mu}}{2} \epsilon \right) v_2(t) &= - \frac{\lambda \hat{\theta}}{4} \epsilon v_1(t - \Delta) - \frac{\hat{\mu}}{2} \epsilon v_1(t). \label{v2 equation 1}
\end{align}

\noindent Before proceeding, we introduce new variables $$\xi = t, \hspace{5mm} \eta = \epsilon t$$ to represent a regular time and a slow time, respectively, so we have $$v_i(t) = v_i(\xi, \eta) \hspace{5mm} \text{ and } \hspace{5mm} v_i(t - \Delta) = v_i(\xi - \Delta, \eta - \epsilon \Delta) $$ and the derivatives become $$ \overset{\bullet}{v}_i(t) = \frac{d v_i}{dt} = \frac{\partial  u_i}{\partial \xi} \frac{d \xi}{d t} + \frac{\partial u_i}{\partial \eta} \frac{d \eta}{d t} = \frac{\partial v_i}{\partial \xi} + \epsilon \frac{\partial v_i}{\partial \eta}, \hspace{5mm} i = 1,2. $$ In addition to this change of variables, we expand our functions and detune our delay from the critical delay for the symmetric system as follows. $$v_1(t) = v_{1,0}(t) + \epsilon v_{1,1}(t) + O(\epsilon^2)$$ $$v_2(t) = v_{2,0}(t) + \epsilon v_{2,1}(t) + O(\epsilon^2)$$ $$\Delta = \Delta_{\text{cr}} + \epsilon \Delta_1 + O(\epsilon^2)$$ Taylor expanding our delayed terms yields $$v_i(t-\Delta) = v_i(\xi - \Delta, \eta - \epsilon \Delta) = \bar{v}_i - \epsilon \left( \Delta_1 \frac{\partial \bar{v}_i}{\partial \xi} + \Delta_{\text{cr}} \frac{\partial \bar{v}_i}{\partial \eta}  \right) + O(\epsilon^2)$$ where $\bar{v}_i := v_i(\xi - \Delta_{\text{cr}}, \eta)$ for $ i = 1, 2.$ Applying these expansions to equations \ref{v1 equation 1} and \ref{v2 equation 1} and then collecting $O(1)$ terms and $O(\epsilon)$ terms yields the following four equations.

\begin{align}
\frac{\partial v_{1,0}}{\partial \xi} + \mu v_{1,0} &= 0 \label{v10 equation 1}\\
\frac{\partial v_{2,0}}{\partial \xi} + \frac{\lambda \theta}{2} \bar{v}_{2,0} + \mu v_{2,0} &= 0 \label{v20 equation 1}\\
\frac{\partial v_{1,1}}{\partial \xi} + \mu v_{1,1} &= - \frac{\partial v_{1,0}}{\partial \eta} - \frac{\theta \hat{\lambda}}{4} \bar{v}_{2,0} - \frac{\hat{\mu}}{2} \left( v_{1,0} + v_{2,0} \right) \label{v11 equation 1}\\
\frac{\partial v_{2,1}}{\partial \xi} + \frac{\lambda \theta}{2} \bar{v}_{2,1} + \mu v_{2,1} &= - \frac{ \partial v_{2,0}}{\partial \eta} + \frac{\lambda \theta}{2} \left( \Delta_1 \frac{\partial \bar{v}_{2,0}}{\partial \xi} + \Delta_{\text{cr}} \frac{\partial \bar{v}_{2,0}}{\partial \eta}  \right) \label{v21 equation 1}\\
 &- \frac{\theta \hat{\lambda}}{4} \bar{v}_{2,0} - \frac{\hat{\mu}}{2} \left( v_{1,0} + v_{2,0} \right) - \frac{\lambda \hat{\theta}}{4} \left( \bar{v}_{1,0} + \bar{v_{2,0}} \right) \nonumber
\end{align}

\noindent It is easy to check that $$v_{1,0} = \tilde{c}(\eta)  \exp(- \mu \xi) \hspace{5mm} \text{ and } \hspace{5mm} v_{2,0} = A(\eta) \cos(\omega_{\text{cr}} \xi) + B \sin(\omega_{\text{cr}} \xi)$$ solve Equations \ref{v10 equation 1} and \ref{v20 equation 1}, respectively, and we can rearrange Equation \ref{v20 equation 1} and use our expression for $v_{2,0}$ to observe that $$\bar{v}_{2,0} = - \frac{2}{\lambda \theta} \left[ \frac{\partial v_{2,0}}{\partial \xi} + \mu v_{2,0}  \right] = \frac{2}{\lambda \theta} \left[ -(\mu A + \omega_{\text{cr}} B) \cos(\omega_{\text{cr}} \xi) + (\omega_{\text{cr}} A - \mu B) \sin(\omega_{\text{cr}} \xi) \right].$$ Thus, we have the following expressions for terms in Equations \ref{v11 equation 1} and \ref{v21 equation 1}

\begin{align*}
\frac{\partial v_{1,0}}{\partial \eta} &= \tilde{c}' \exp(-\mu \xi)\\
\frac{\partial v_{2,0}}{\partial \eta} &= A' \cos(\omega_{\text{cr}} \xi) + B' \sin(\omega_{\text{cr}} \xi)\\
\frac{\partial \bar{v}_{2,0}}{\partial \eta} &= \frac{2}{\theta \lambda} \left[ -(\mu A' + \omega_{\text{cr}} B') \cos(\omega_{\text{cr}} \xi) + (\omega_{\text{cr}} A' - \mu B') \sin(\omega_{\text{cr}} \xi) \right]\\
\frac{\partial \bar{v}_{2,0}}{\partial \xi} &= \frac{2  \omega_{\text{cr}}}{\theta \lambda} \left[ ( \omega_{\text{cr}} A - \mu B) \cos( \omega_{\text{cr}} \xi) + (\mu A +  \omega_{\text{cr}} B) \sin( \omega_{\text{cr}} \xi) \right]
\end{align*}

\noindent and Equations \ref{v11 equation 1} and \ref{v21 equation 1} can respectively be rewritten as

\begin{align}
\frac{\partial v_{1,1}}{\partial \xi} + v_{1,1} &= \left(\tilde{c}'(\eta) - \frac{\hat{\mu}}{2} \tilde{c}(\eta)\right)\exp(- \mu \xi) + \cos(\omega_{\text{cr}} \xi) \left[ \frac{\hat{\lambda}}{2 \lambda} (\mu A(\eta) + \omega_{\text{cr}} B(\eta)) - \frac{\hat{\mu}}{2} A(\eta) \right]  \nonumber\\
&+ \sin(\omega_{\text{cr}} \xi) \left[ - \frac{\hat{\lambda}}{2 \lambda} (\omega_{\text{cr}} A(\eta) - \mu B(\eta)) - \frac{\hat{\mu}}{2} B(\eta)  \right] \label{v11 equation 2}\\
\frac{\partial v_{2,1}}{\partial \xi} &+ \frac{\lambda \theta}{2} \bar{v}_{2,1} + \mu v_{2,1} = - \frac{\hat{\mu}}{2} \tilde{c} \exp(- \mu \xi) \nonumber\\
&+ \cos(\omega_{\text{cr}} \xi) \Bigg[A'(\eta)(- \mu \Delta_{\text{cr}} - 1) + B'(\eta) (- \omega_{\text{cr}} \Delta_{\text{cr}}) \nonumber \\
&+ A(\eta) \left(\omega_{\text{cr}}^2 \Delta_1 + \frac{\mu \hat{\lambda}}{2 \lambda} - \frac{\hat{\mu}}{2} + \frac{\mu \hat{\theta}}{2 \theta}\right) + B(\eta)\left(- \mu \omega_{\text{cr}} \Delta_1 + \frac{\omega_{\text{cr}} \hat{\lambda}}{2 \lambda} + \frac{\omega_{\text{cr}} \hat{\theta}}{2 \theta}\right) \Bigg] \nonumber\\
&+ \sin(\omega_{\text{cr}} \xi) \Bigg[ A'(\eta)\left( \omega_{\text{cr}} \Delta_{\text{cr}} \right) + B'(\eta) (-\mu \Delta_{\text{cr}} - 1) \nonumber\\
&+ A(\eta) \left( \mu \omega_{\text{cr}} \Delta_1 - \frac{\omega_{\text{cr}} \hat{\lambda}}{2 \lambda} - \frac{\omega_{\text{cr}} \hat{\theta}}{2 \theta} \right) + B(\eta) \left( \omega_{\text{cr}}^2 \Delta_1 + \frac{\mu \hat{\lambda}}{2 \lambda} - \frac{\hat{\mu}}{2} + \frac{\mu \hat{\theta}}{2 \theta} \right) \Bigg] . \label{v21 equation 2}
\end{align}

We observe that the general homogeneous solutions for $v_{1,1}$ and $v_{2,1}$ are the same as the general homogeneous solutions for $v_{1,0}$ and $v_{2,0}$, respectively. In both Equations \ref{v11 equation 2} and \ref{v21 equation 2}, there are terms present in the inhomogeneous part that are not linearly independent of the corresponding homogeneous solution. It is easy to see, by the method of undetermined coefficients for example (which introduces a factor of $\xi$ on to terms in the particular solution that correspond to terms in the inhomogeneity that are linearly dependent with a homogeneous solution), that such terms will give rise to secular terms in the particular solutions to each equation. We want to set terms in the inhomogeneities that introduce secular solutions equal to zero because our asymptotic expansions would otherwise become invalid for large time as the series would no longer be asymptotic when $\xi = O(\frac{1}{\epsilon})$, for example, at which point $O(\epsilon)$ terms in the series would become $O(1)$. Equating the coefficients of these terms in the inhomogeneities equal to zero yields the following equations.

\begin{align}
\frac{d \tilde{c}}{d \eta} &= \frac{\hat{\mu}}{2} \tilde{c} \label{secular equation 1}\\
\frac{d A}{d \eta} &= K_1 A(\eta) + K_2 B(\eta) \label{secular equation 2}\\
\frac{d B}{d \eta} &= K_3 A(\eta) + K_4 B(\eta) \label{secular equation 3}
\end{align}

Solving Equation \ref{secular equation 1} gives us that $\tilde{c}(\eta) = \tilde{k} \exp(\frac{\hat{\mu}}{2} \eta)$ and therefore $v_{1,0} = \tilde{k} \exp(\frac{\hat{\mu}}{2} \eta - \mu \xi)$ which decays to $0$ for sufficiently small $\epsilon$. We observe that the system of Equations \ref{secular equation 2}-\ref{secular equation 3} is in the form $$c_1 A' + c_2 B' + c_3 A + c_4 B = 0$$ $$-c_2 A' + c_1 B' - c_4 A + c_3 B = 0$$ where $$c_1 = - \mu \Delta_{\text{cr}} - 1, \hspace{5mm} c_2 = -  \omega_{\text{cr}} \Delta_{\text{cr}}, \hspace{5mm} c_3 =  \omega_{\text{cr}}^2 \Delta_1 + \frac{\mu \hat{\lambda}}{2 \lambda}  - \frac{\hat{\mu}}{2} + \frac{\mu \hat{\theta}}{2 \theta}, \hspace{5mm} c_4 = -  \mu \omega_{\text{cr}} \Delta_1 + \frac{\omega_{\text{cr}} \hat{\lambda}}{2 \lambda} + \frac{\omega_{\text{cr}} \hat{\theta}}{2 \theta}.$$  This tells us that 

\begin{align*}
K_1 &= K_4 = \frac{-(c_1 c_3 + c_2 c_4)}{c_1^2 + c_2^2}\\
K_2 &= - K_3 = \frac{c_2 c_3 - c_1 c_4}{c_1^2 + c_2^2}.
\end{align*}

\noindent So, we have the linear system 
\begin{align}
\begin{bmatrix}
\frac{d A}{d \eta} \\
\frac{d B}{d \eta}
\end{bmatrix} = \begin{bmatrix}
K_1 & - K_3 \\
K_3 & K_1
\end{bmatrix} \begin{bmatrix}
A\\
B
\end{bmatrix}. \label{slow flow system}
\end{align}

\noindent Recall that $$v_{2,0} = A(\eta) \cos(\omega_{\text{cr}} \xi) + B(\eta) \sin(\omega_{\text{cr}} \xi)$$ so that $A(\eta)$ and $B(\eta)$ represent the amplitudes of each term in $v_{2,0}$.  Thus, the equilibrium point $A(\eta) = B(\eta) = 0$ of this linear system corresponds to when $v_{2,0} = 0$ and it also corresponds to when the sinusoidal terms in the inhomogeneity in \ref{v11 equation 2} are equal to zero which would make $v_{1,1}$ decay for sufficiently small $\epsilon$. Because of this, the stability of the equilibrium point $(A,B) = (0,0)$ to \ref{slow flow system} corresponds to the stability of the DDE system given in Equations \ref{perturbed equation 1}-\ref{perturbed equation 2}>.  Thus, our problem reduces to analyzing the stability of a linear system. 

\noindent  Now we define the following matrix $$K = \begin{bmatrix}
K_1 & - K_3 \\
K_3 & K_1
\end{bmatrix}.$$

\noindent Note that since we assumed $\lambda \theta > 2 \mu$, we have that each entry of $K$ is real. To analyze the stability of Equation \ref{slow flow system}, we need to determine whether the real parts of the eigenvalues of $K$ are positive or negative. However, keep in mind that the entries of $K$ depend on $\Delta_1$, so if we can find conditions on what the value of $\Delta_1$ must be in order for the real parts of the eigenvalues of $K$ to change sign, then we'll essentially have found an approximation (up to $O(\epsilon)$ terms) for the critical value of $\Delta$ for which the stability of our DDE system given in Equations \ref{perturbed equation 1}-\ref{perturbed equation 2} changes and a Hopf bifurcation occurs.  In particular, we note the special structure of this matrix $K$ (it is actually the matrix representation of the complex number $K_1 + i K_3$) and see that it has eigenvalues $K_1 \pm i K_3$. Thus, $K_1$ is the real part of both of the eigenvalues of $K$, so we want to find conditions on $\Delta_1$ under which $K_1$ is positive or negative. We see that $\text{sgn}(K_1) = -\text{sgn}(c_1 c_3 + c_2 c_4) $. 

\begin{align*}
c_1 c_3 + c_2 c_4 &= ( - \mu \Delta_{\text{cr}} - 1) \left( \omega_{\text{cr}}^2 \Delta_1 + \frac{\mu \hat{\lambda}}{2 \lambda}  - \frac{\hat{\mu}}{2} + \frac{\mu \hat{\theta}}{2 \theta} \right) + (-  \omega_{\text{cr}} \Delta_{\text{cr}}) \left( -  \mu \omega_{\text{cr}} \Delta_1 + \frac{\omega_{\text{cr}} \hat{\lambda}}{2 \lambda} + \frac{\omega_{\text{cr}} \hat{\theta}}{2 \theta} \right)\\
&= - \omega_{\text{cr}}^2 \Delta_1 - \left[ \frac{\mu + \Delta_{\text{cr}}(\mu^2 + \omega_{\text{cr}}^2)}{2 \lambda} \hat{\lambda} - \frac{1 + \mu \Delta_{\text{cr}}}{2} \hat{\mu} + \frac{\mu + \Delta_{\text{cr}}(\mu^2 + \omega_{\text{cr}}^2)}{2 \theta} \hat{\theta} \right]
\end{align*}

\noindent So we see that $K_1 < 0$ when $$\Delta_1 < - \left( \frac{\mu + \Delta_{\text{cr}}(\mu^2 + \omega_{\text{cr}}^2)}{2 \lambda \omega_{\text{cr}}^2} \hat{\lambda} - \frac{1 + \mu \Delta_{\text{cr}}}{2 \omega_{\text{cr}}^2} \hat{\mu} + \frac{\mu + \Delta_{\text{cr}}(\mu^2 + \omega_{\text{cr}}^2)}{2 \theta \omega_{\text{cr}}^2} \hat{\theta} \right)$$ and $K_1 > 0$ when $$\Delta_1 >  - \left( \frac{\mu + \Delta_{\text{cr}}(\mu^2 + \omega_{\text{cr}}^2)}{2 \lambda \omega_{\text{cr}}^2} \hat{\lambda} - \frac{1 + \mu \Delta_{\text{cr}}}{2 \omega_{\text{cr}}^2} \hat{\mu} + \frac{\mu + \Delta_{\text{cr}}(\mu^2 + \omega_{\text{cr}}^2)}{2 \theta \omega_{\text{cr}}^2} \hat{\theta} \right)$$ and since $\Delta = \Delta_{\text{cr}} + \epsilon \Delta_1 + O(\epsilon^2)$, we see that the critical value of $\Delta$ for which the stability of our perturbed DDE system given  in Equations \ref{perturbed equation 1}-\ref{perturbed equation 2} changes is $$\Delta_{\text{mod}} = \Delta_{\text{cr}} - \epsilon \left( \frac{\mu + \Delta_{\text{cr}}(\mu^2 + \omega_{\text{cr}}^2)}{2 \lambda \omega_{\text{cr}}^2} \hat{\lambda} - \frac{1 + \mu \Delta_{\text{cr}}}{2 \omega_{\text{cr}}^2} \hat{\mu} + \frac{\mu + \Delta_{\text{cr}}(\mu^2 + \omega_{\text{cr}}^2)}{2 \theta \omega_{\text{cr}}^2} \hat{\theta} \right) + O(\epsilon^2).$$

\end{proof}

An observation we can make that more clearly relates this expression of $\Delta_{\text{mod}}$ to the form of $\Delta_{\text{cr}}$ given in \citet{novitzky2019nonlinear} is that $$\Delta_{\text{mod}} = \frac{\arccos \left(  \frac{ -2 \left(  \mu + \frac{\hat{\mu} \epsilon}{2} \right) }{ \left( \lambda + \frac{\hat{\lambda} \epsilon}{2}  \right)  \left(  \theta + \frac{\hat{\theta} \epsilon}{2} \right) }  \right)}{\omega_{\text{mod}}} + O(\epsilon^2) \hspace{5mm} \text{ where }$$ $$\omega_{\text{mod}} = \frac{1}{2} \sqrt{ \left( \lambda + \frac{\hat{\lambda} \epsilon}{2} \right)^2  \left( \theta + \frac{\hat{\theta} \epsilon}{2}  \right)^2 - 4 \left( \mu + \frac{\hat{\mu} \epsilon}{2}   \right)^2    }$$ which can be seen by Taylor expanding about $\epsilon = 0$. To give some intuition regarding how this observation was made, consider the case where we only perturb the arrival rate $\lambda$. Linearizing the system, neglecting $O(\epsilon^2)$ terms, we'll obtain equations \ref{u1 equation 45} and \ref{u2 equation 46}, but with $\hat{\theta} = \hat{\mu} = 0$.  

\begin{align}
\overset{\bullet}{u}_1(t) &= \frac{(\lambda + \hat{\lambda} \epsilon) \theta}{4}\left[ u_2(t - \Delta) - u_1(t - \Delta) \right] - \mu  u_1(t) \\
\overset{\bullet}{u}_2(t) &= \frac{\lambda \theta}{4} [u_1(t- \Delta) - u_2(t - \Delta)]  - \mu u_2(t) 
\end{align}

Applying the transformation $v_1(t) = u_1(t) + u_2(t)$ and $v_2(t) = u_1(t) - u_2(t)$ gives us equations \ref{v1 equation 1} and \ref{v2 equation 1} except with $\hat{\theta} = \hat{\mu} = 0$.

\begin{align}
\overset{\bullet}{v}_1(t) +  \mu  v_1(t) &= - \frac{\theta \hat{\lambda}}{4}\epsilon v_2(t - \Delta) \\
\overset{\bullet}{v}_2(t) + \left( \frac{\lambda \theta}{2} + \frac{\theta \hat{\lambda}}{4} \epsilon  \right) v_2(t - \Delta) + \mu  v_2(t) &= 0
\end{align}

We see that the coupling that was present in equations \ref{v1 equation 1} and \ref{v2 equation 1} has been simplified. We see that the homogeneous solution to the equation for $v_1(t)$ decays with time. Thus, if $v_2(t)$ is stable, then the particular solution for the $v_1(t)$ equation will be stable and if $v_2(t)$ is unstable then the system is unstable. Because of this, we can restrict our attention to the equation for $v_2(t).$ Letting $v_2(t) = e^{rt}$ gives us the characteristic equation $$r + C e^{- r \Delta} + \mu = 0$$ where $C =   \frac{\lambda \theta}{2} + \frac{\theta \hat{\lambda}}{4} \epsilon  $. The system is stable when $r$ has negative real part and it is unstable when $r$ has positive real part, so the change in stability occurs when $r$ crosses the imaginary axis, so we let $r = i \omega$ for some real $\omega$. Collecting real and imaginary parts gives us the equations 

\begin{align}
\sin(\omega \Delta) &= \frac{\omega}{C}\\
 \cos(\omega \Delta) &= - \frac{\mu}{C}
\end{align}

\noindent and since $\cos^2(\omega \Delta) + \sin^2(\omega \Delta) = 1$, we are able to get $$\Delta_{\text{mod}} = \frac{\arccos \left( - \frac{\mu}{C} \right) }{\omega_{\text{mod}}}, \hspace{5mm} \omega_{\text{mod}} = \sqrt{C^2 - \mu^2}.$$ We see that $$C = \frac{\lambda \theta}{2} + \frac{\theta \hat{\lambda}}{4} \epsilon   =  \frac{\theta \left(  \lambda + \frac{\hat{\lambda}}{2} \epsilon \right)}{2}$$ so we get $$\Delta_{\text{mod}} = \frac{2 \arccos \left(  - \frac{2 \mu}{ \left( \lambda + \frac{\hat{\lambda} \epsilon }{2} \right) \theta  }  \right)}{\sqrt{ \left( \lambda + \frac{\hat{\lambda}}{2} \epsilon \right)^2 \theta^2 - 4 \mu^2  }}.$$

While this isn't a rigorous approach to obtaining the expression we got for $\Delta_{\text{mod}}$ with all of the parameters perturbed as the coupling in the general case causes complications, it should at least give some intuition for why we considered the expression above. The perturbations to the parameters end up being divided by $2$ in each case due to the transformation from $u_1$ and $u_2$ to $v_1$ and $v_2$. This also has to do with the fact that the perturbation terms for the $\lambda$ and $\mu$ cases only appear in a single equation in equations \ref{v1 equation 1} and \ref{v2 equation 1}, so when forming the equation for $v_2(t)$, these terms do not get the factor of $2$ that terms that were in both equations (but differed by a factor of -1) got. Also, even though the $\theta$ perturbation is in both equations, it is multiplying a $u_1$ term in both cases which transforms to $$u_1 = \frac{v_1 + v_2}{2}$$ which introduces a factor of $\frac{1}{2}.$

Another important observation to make is that our expression for $\Delta_{\text{mod}}$ appears to not depend on $\hat{\alpha}$ up to first order. However, if we collect $O(\epsilon^2)$ terms when linearizing the system, we can see the contributions from $\hat{\alpha}$. We illustrate this in Theorem \ref{delta_mod_alpha_theorem} by considering the special case where $\hat{\lambda} = \hat{\mu} = \hat{\theta} = 0$ for ease of calculation.

\begin{theorem}
\label{delta_mod_alpha_theorem}
When $\hat{\lambda} = \hat{\mu} = \hat{\theta} = 0$, we have that $$\Delta_{\text{mod}} =  \Delta_{\text{cr}} + \frac{4 \mu^3 + \mu^2 \lambda^2 \theta^2 \Delta_{\text{cr}}}{4 \omega_{\text{cr}}^2 (\lambda \theta + 2 \mu)^2} \epsilon^2 \hat{\alpha}^2 + O(\epsilon^3).$$
\end{theorem}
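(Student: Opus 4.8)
The plan is to exploit a structural simplification that occurs precisely when only $\hat\alpha$ is perturbed. With $\hat\lambda=\hat\mu=\hat\theta=0$, adding Equations \ref{perturbed equation 1}--\ref{perturbed equation 2} gives, \emph{exactly}, $\overset{\bullet}{q}_1(t)+\overset{\bullet}{q}_2(t)=\lambda-\mu\big(q_1(t)+q_2(t)\big)$, since the two choice probabilities sum to $1$. Hence $v_1:=q_1+q_2$ satisfies a linear, undelayed ODE and relaxes to $\lambda/\mu$. Subtracting the two equations and using $\frac{e^{x}-e^{y}}{e^{x}+e^{y}}=\tanh\!\big(\tfrac{x-y}{2}\big)$, the variable $v_2:=q_1-q_2$ obeys the scalar autonomous DDE
$$\overset{\bullet}{v}_2(t)=\lambda\tanh\!\Big(\tfrac{\epsilon\hat\alpha-\theta v_2(t-\Delta)}{2}\Big)-\mu v_2(t).$$
Because the linearization of the full system splits into a stable undelayed mode ($v_1$, with eigenvalue $-\mu$) and this scalar DDE, the stability of the $2$D system is governed entirely by the stability of the equilibrium $v_2^{\ast}$ of the scalar DDE; this lets me bypass the method of multiple scales that was needed in Theorem \ref{hopf_eqn} (the coupling that forced that machinery is absent here). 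By Theorem \ref{equilibrium_theorem}, or by Taylor-expanding the fixed-point equation $\lambda\tanh\!\big(\tfrac{\epsilon\hat\alpha-\theta v_2^{\ast}}{2}\big)=\mu v_2^{\ast}$ directly, one has $v_2^{\ast}=\tfrac{\lambda\hat\alpha}{\lambda\theta+2\mu}\,\epsilon+O(\epsilon^2)$.

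Next I would linearize the scalar DDE about $v_2^{\ast}$. Setting $s:=\tfrac12\big(\epsilon\hat\alpha-\theta v_2^{\ast}\big)=\tfrac{\mu\hat\alpha}{\lambda\theta+2\mu}\,\epsilon+O(\epsilon^2)$, the linearized equation is $\overset{\bullet}{w}(t)+C\,w(t-\Delta)+\mu\,w(t)=0$ with $C:=\tfrac{\lambda\theta}{2}\operatorname{sech}^{2}(s)$, a perturbation of the symmetric-case coefficient $\tfrac{\lambda\theta}{2}$. Substituting $w=e^{rt}$ gives $r+Ce^{-r\Delta}+\mu=0$; setting $r=i\omega$ and separating real and imaginary parts yields, exactly as in the symmetric derivation of \citet{novitzky2019nonlinear}, the stability boundary $\Delta_{\text{mod}}=F(C)$ and $\omega_{\text{mod}}=\sqrt{C^2-\mu^2}$, where $F(C):=\arccos(-\mu/C)/\sqrt{C^2-\mu^2}$ (valid since $C>\mu$ for small $\epsilon$ because $\lambda\theta>2\mu$). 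The theorem then follows by Taylor-expanding $F(C)$ about $C=\tfrac{\lambda\theta}{2}$: one has $F(\tfrac{\lambda\theta}{2})=\Delta_{\text{cr}}$ and $\arccos(-2\mu/(\lambda\theta))=\omega_{\text{cr}}\Delta_{\text{cr}}$, and a direct differentiation gives $F'(\tfrac{\lambda\theta}{2})=-\dfrac{4\mu+\lambda^{2}\theta^{2}\Delta_{\text{cr}}}{2\lambda\theta\,\omega_{\text{cr}}^{2}}$.

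The one genuinely delicate point --- and the reason the leading correction is $O(\epsilon^{2}\hat\alpha^{2})$ rather than $O(\epsilon\hat\alpha)$, consistent with the absence of any $\hat\alpha$ term in Theorem \ref{hopf_eqn} --- is that $\operatorname{sech}^{2}$ is an even function, so $C=\tfrac{\lambda\theta}{2}\big(1-s^{2}+O(s^{4})\big)$ has no term linear in $s$; inserting $s^{2}=\tfrac{\mu^{2}\hat\alpha^{2}}{(\lambda\theta+2\mu)^{2}}\epsilon^{2}+O(\epsilon^{3})$ gives $C=\tfrac{\lambda\theta}{2}-\tfrac{\lambda\theta\mu^{2}\hat\alpha^{2}}{2(\lambda\theta+2\mu)^{2}}\epsilon^{2}+O(\epsilon^{3})$. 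A small bookkeeping check is needed here: the $O(\epsilon^{2})$ part of $v_2^{\ast}$, which Theorem \ref{equilibrium_theorem} does not pin down, enters $s$ only at order $\epsilon^{2}$ and hence, through the quadratic dependence, enters $C$ only at order $\epsilon^{3}$, so it cannot affect the claimed coefficient. Plugging $C=\tfrac{\lambda\theta}{2}+C_2\epsilon^{2}+O(\epsilon^3)$ into $\Delta_{\text{mod}}=F(\tfrac{\lambda\theta}{2})+F'(\tfrac{\lambda\theta}{2})\,C_2\,\epsilon^{2}+O(\epsilon^{3})$ and simplifying produces $\Delta_{\text{mod}}=\Delta_{\text{cr}}+\dfrac{4\mu^{3}+\mu^{2}\lambda^{2}\theta^{2}\Delta_{\text{cr}}}{4\omega_{\text{cr}}^{2}(\lambda\theta+2\mu)^{2}}\,\epsilon^{2}\hat\alpha^{2}+O(\epsilon^{3})$. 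I expect the main obstacle to be organizational rather than conceptual: correctly tracking which quantities must be known to $O(\epsilon)$ versus $O(\epsilon^{2})$, and executing the derivative of $F$ at $C=\tfrac{\lambda\theta}{2}$ together with the final algebraic simplification to the stated form without error.
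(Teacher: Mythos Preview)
Your proposal is correct and reaches the same scalar characteristic equation $r+Ce^{-r\Delta}+\mu=0$ that the paper reaches, but by a cleaner route. The paper first computes the equilibrium to second order (finding $a_2=b_2=0$), linearizes the two-dimensional system about it keeping $O(\epsilon^2)$ coefficients, and only then applies the $v_1,v_2$ transformation to decouple; its delay coefficient $D=\tfrac{\lambda\theta}{2}-\tfrac{1}{8}\big[(a_1-b_1)^2\lambda\theta^3+2(b_1-a_1)\lambda\theta^2\hat\alpha+\lambda\theta\hat\alpha^2\big]\epsilon^2$ then has to be simplified before the final Taylor expansion. You instead observe that when $\hat\lambda=\hat\mu=\hat\theta=0$ the \emph{nonlinear} system decouples exactly in the $(v_1,v_2)$ variables --- $v_1$ obeys an undelayed linear ODE and $v_2$ an autonomous scalar DDE with a $\tanh$ nonlinearity --- and you linearize only the latter. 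This buys you two things: the reduction to a scalar problem is exact rather than perturbative, and writing the coefficient as $C=\tfrac{\lambda\theta}{2}\operatorname{sech}^{2}(s)$ makes the absence of any $O(\epsilon)$ correction transparent via the evenness of $\operatorname{sech}^{2}$, whereas in the paper this emerges only after the three pieces of $D$ combine into a perfect square. Your bookkeeping remark that the unknown $O(\epsilon^2)$ part of $v_2^{\ast}$ feeds into $C$ only at $O(\epsilon^3)$ is the analogue of the paper's explicit computation $a_2=b_2=0$, but you get it for free without ever computing the second-order equilibrium. The two coefficients agree ($D=C$), and both proofs finish with the same Taylor expansion of $\arccos(-\mu/C)/\sqrt{C^2-\mu^2}$ about $C=\tfrac{\lambda\theta}{2}$.
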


\begin{proof}
 It can be shown by a calculation similar to the one in Section~\ref{sec_equilibrium} that the equilibrium point in this special case is $$(q_1^*, q_2^*) = \left( \frac{\lambda}{2 \mu} + a_1 \epsilon + a_2 \epsilon^2 + O(\epsilon^3),   \frac{\lambda}{2 \mu} + b_1 \epsilon + b_2 \epsilon^2 + O(\epsilon^3)  \right)$$ where $$a_1 = \frac{\lambda}{2 (\lambda \theta + 2 \mu)}, \hspace{5mm} b_1 = \frac{- \lambda}{2 (\lambda \theta + 2 \mu)}$$ and $a_2 = b_2 = 0$. Linearizing about this equlibrium point and neglecting $O(\epsilon^3)$ terms gives us the following linear system.

\begin{align*}
\overset{\bullet}{u}_1(t) &= \left( \frac{\lambda \theta}{4} - \frac{1}{16} \left[ (a_1 - b_1)^2 \lambda \theta^3 + 2 (b_1 - a_1) \lambda \theta^2 \hat{\alpha} + \lambda \theta \hat{\alpha}^2  \right] \epsilon^2  \right) [u_2(t - \Delta) - u_1(t - \Delta)] - \mu u_1(t)\\
\overset{\bullet}{u}_2(t) &= \left( \frac{\lambda \theta}{4} - \frac{1}{16} \left[ (a_1 - b_1)^2 \lambda \theta^3 + 2 (b_1 - a_1) \lambda \theta^2 \hat{\alpha} + \lambda \theta \hat{\alpha}^2  \right] \epsilon^2  \right) [u_1(t - \Delta) - u_2(t - \Delta)] - \mu u_2(t)\\
\end{align*}

\noindent Using the transformation $$v_1(t) = u_1(t) + u_2(t), \hspace{5mm} v_2(t) = u_1(t) - u_2(t)$$ gives us the system 

\begin{align}
\overset{\bullet}{v}_1(t) &+ \mu v_1(t) = 0\\
\overset{\bullet}{v}_2(t) &+ \left( \frac{\lambda \theta}{2} - \frac{1}{8} \left[ (a_1 - b_1)^2 \lambda \theta^3 + 2 (b_1 - a_1) \lambda \theta^2 \hat{\alpha} + \lambda \theta \hat{\alpha}^2  \right] \epsilon^2  \right) v_2(t - \Delta) + \mu v_2(t) = 0
\end{align}

\noindent We see that $v_1(t)$ decays with time and thus we restrict our analysis to (3.20). Letting $v_2(t) = e^{rt}$, we get the characteristic equation $$r + D e^{- r \Delta} + \mu = 0$$ where we let $D = \left( \frac{\lambda \theta}{2} - \frac{1}{8} \left[ (a_1 - b_1)^2 \lambda \theta^3 + 2 (b_1 - a_1) \lambda \theta^2 \hat{\alpha} + \lambda \theta \hat{\alpha}^2  \right] \epsilon^2  \right)$ for ease of notation. If $r$ has negative real part, then we have stability and we have instability when $r$ has positive real part. Thus, the change in stability occurs when $r$ crosses the imaginary axis, that is when $r = i \omega$ for some real $\omega$. Letting $r = i \omega$ and collecting real and imaginary parts gives us the following two equations

\begin{align}
\sin(\omega \Delta) &= \frac{\omega}{D}\\
\cos(\omega \Delta) &= - \frac{\mu}{D}
\end{align}

\noindent so that, using the fact that $\cos^2(\omega \Delta) + \sin^2(\omega \Delta) = 1$, we get $$\Delta_{\text{mod}} = \frac{\arccos \left( - \frac{\mu}{D} \right)}{\omega_{\text{mod}}}, \hspace{5mm} \omega_{\text{mod}} = \sqrt{D^2 - \mu^2}.$$ Taylor expanding $\Delta_{\text{mod}}$ about $\epsilon = 0$ gives us the result $$\Delta_{\text{mod}} =  \Delta_{\text{cr}} + \frac{4 \mu^3 + \mu^2 \lambda^2 \theta^2 \Delta_{\text{cr}}}{4 \omega_{\text{cr}}^2 (\lambda \theta + 2 \mu)^2} \epsilon^2 \hat{\alpha}^2 + O(\epsilon^3)$$ when $\hat{\lambda} = \hat{\mu} = \hat{\theta} = 0.$

\end{proof}

\subsection{Numerical Verification of Hopf Bifurcation}

Below we show plots of queue length versus time for various cases to demonstrate the bifurcation in $\Delta$. In each figure below, we consider our system with various parameters either being perturbed or not perturbed from symmetry. In each case, we consider having a delay $.05$ below and $.05$ above the corresponding critical delay value. In each case, we see that the queue length amplitudes decay to equilibrium values when the delay $\Delta$ is below the critical value. When we increase the delay to be above the critical delay, we see oscillations increase and approach a fixed amplitude forming a limit cycle. This suggests that we have a Hopf bifurcation at the critical delay. This observation prompts us to consider the amplitudes of limit cycles in Section \ref{sec_amplitude}.


In Figure \ref{Fig7}, we consider the symmetric case and we see that the amplitudes of the queues oscillate and decay when the delay is below the critical delta and approach some limiting amplitude when the delay is above the critical delta. In Figure \ref{Fig8}, we consider the case where only the arrival rate $\lambda$ is perturbed positively. In this case, we see that increasing the arrival rate in one of the queues causes the critical delay to be less than the critical delay in the symmetric case. In Figure \ref{Fig9}, the service rate $\mu$ is the only perturbed parameter and we are able to see that increasing the service rate in one of the queues leads to an increase in the critical delay. These observations tell us that increasing the inflow of customers in one queue makes the system more susceptible to oscillations caused by delayed information whereas increasing the service rate in one of the queues helps to mitigate this issue. In Figure \ref{Fig10}, the only perturbed parameter is $\theta$ and we see that increasing the value of $\theta$ corresponding to one of the queues causes a decrease in the critical delay. We note that increasing the value of the $\theta$ corresponding to one of the queues increases the number of arrivals into that queue and thus it makes sense that it impacts the critical delay in the same direction that perturbing the arrival rate does. In Figure \ref{Fig11}, the only perturbed parameter is $\alpha$. We see that the critical delay is roughly the same as the critical delay in the symmetric case which isn't surprising based on the result of Theorem \ref{delta_mod_alpha_theorem} which says that perturbing $\alpha$ only affects the value of the critical delay if we include $O(\epsilon^2)$ terms. Figure \ref{Fig12} is an example where all four of the parameters we have discussed were perturbed from symmetry. The impact that perturbing all four of these parameters has on the critical delay will depend on how much each parameter is perturbed by.

%


\vspace{30mm}

\begin{figure}[ht!]
\vspace{-35mm}
  \hspace{-5mm}~\includegraphics[scale=.4]{./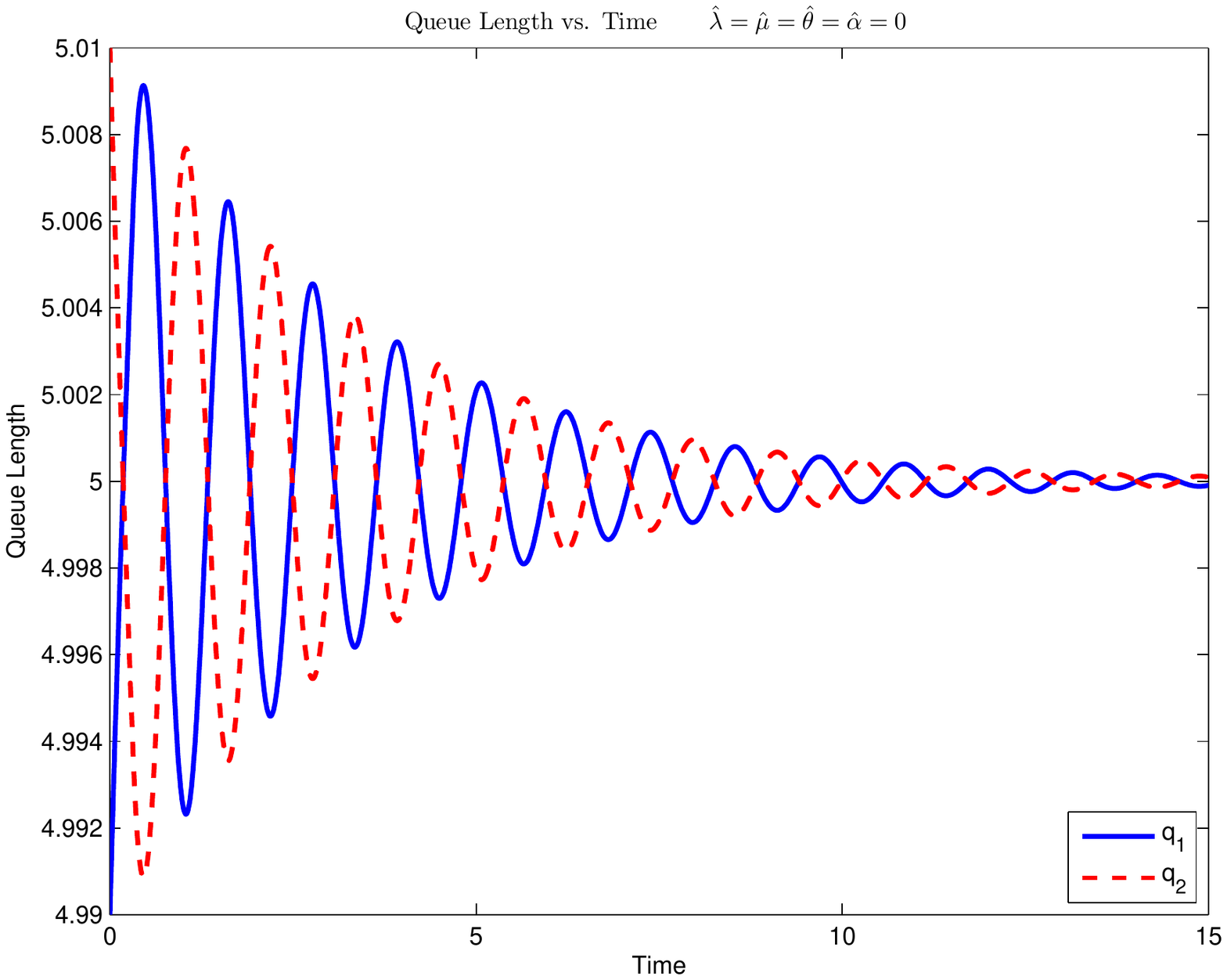}~\hspace{-10mm}~\includegraphics[scale=.4]{./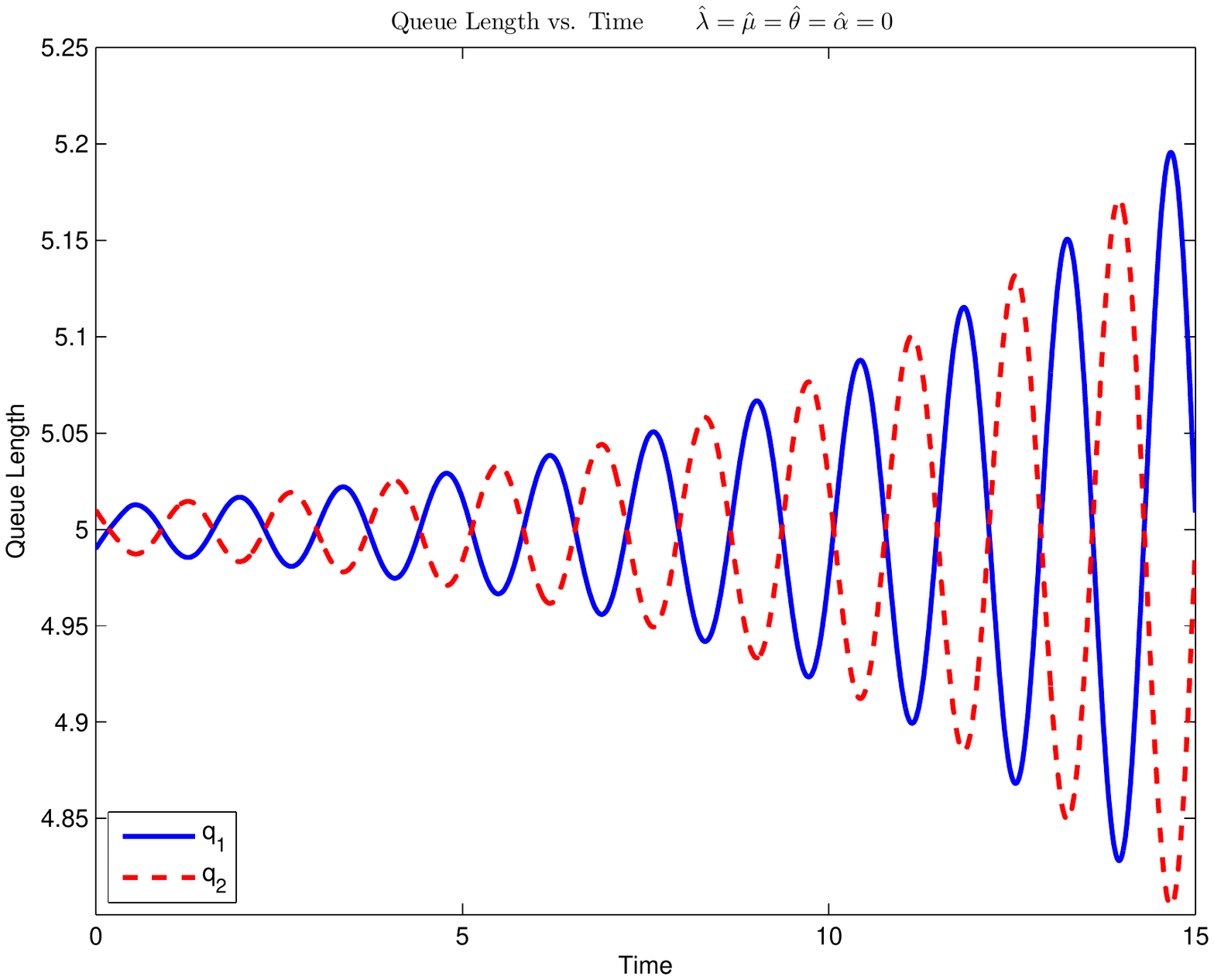}
\vspace{-30mm}
  \caption{$\hat{\lambda} = \hat{\mu} = \hat{\theta} = \hat{\alpha} = 0$,  $\lambda = 10,\mu=1, \theta=1, \alpha=0$, $\Delta_{\text{mod}} = \Delta_{\text{cr}} \approx .3617$\\ On $[-\Delta, 0]$, $q_1 = 4.99$, $q_2 = 5.01$, Left: $\Delta = \Delta_{\text{mod}} - .05$, Right: $\Delta = \Delta_{\text{mod}} + .05$}
\label{Fig7}
\end{figure}


%
%


\begin{figure}[ht!]
\vspace{-35mm}
  \hspace{-5mm}~\includegraphics[scale=.4]{./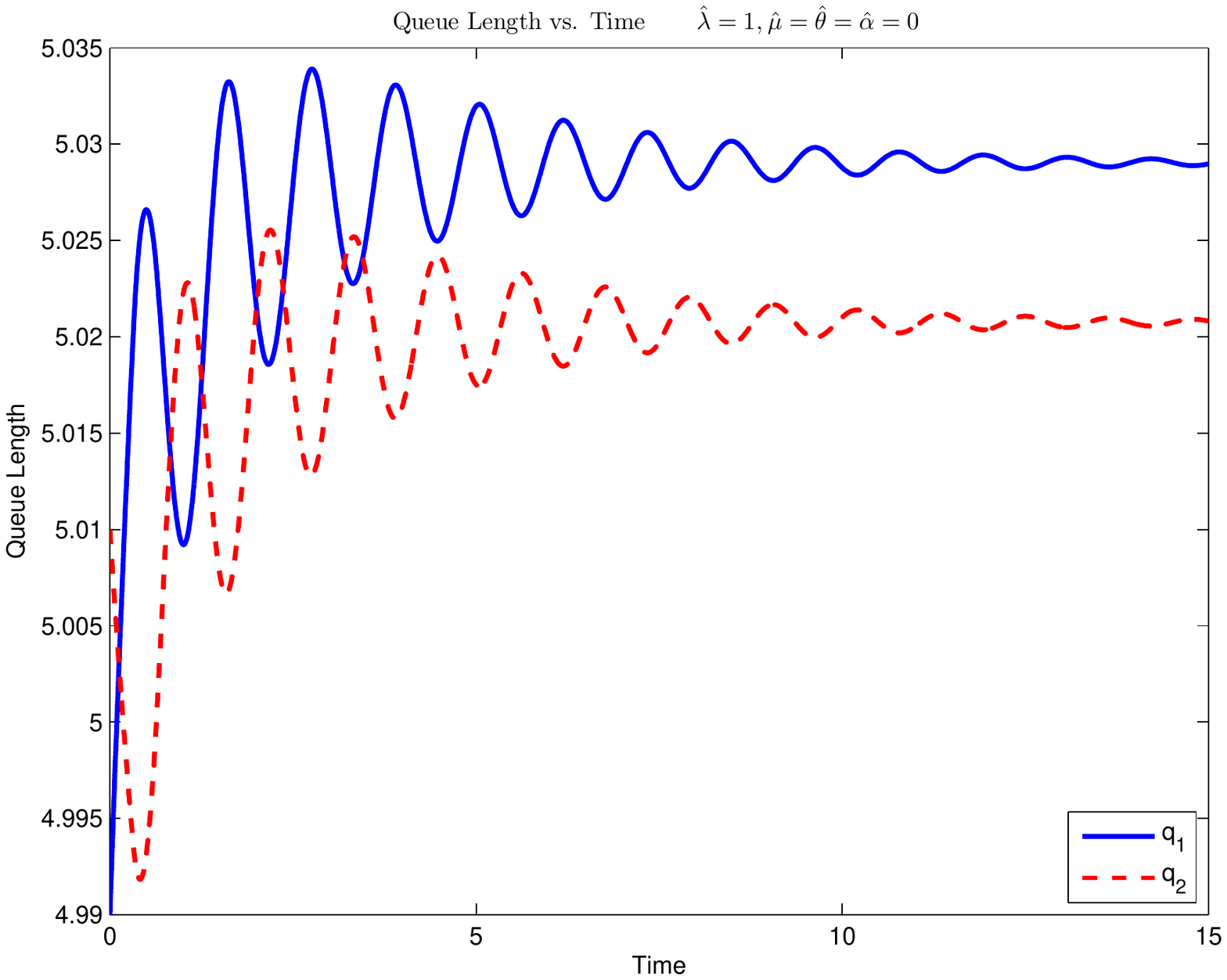}~\hspace{-10mm}~\includegraphics[scale=.4]{./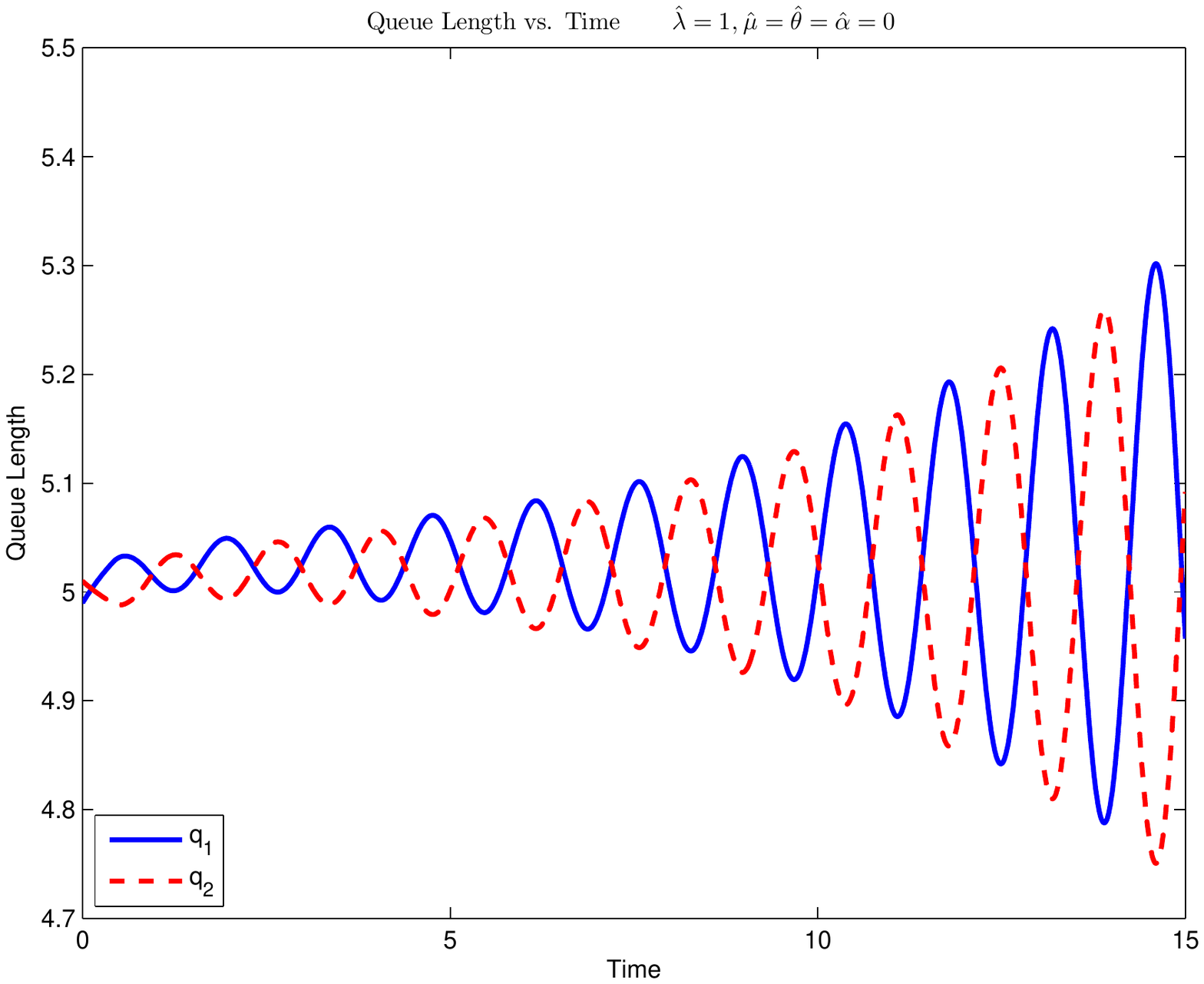}
\vspace{-30mm}
  \caption{$\hat{\lambda} = 1, \hat{\mu} = \hat{\theta} = \hat{\alpha} = 0$, $\Delta_{\text{mod}} \approx .3596$
$\epsilon = .1, \lambda = 10, \mu=1, \theta=1, \alpha=0$\\ On $[-\Delta, 0]$, $q_1 = 4.99$, $q_2 = 5.01$, Left: $\Delta = \Delta_{\text{mod}} - .05$, Right: $\Delta = \Delta_{\text{mod}} + .05$}
\label{Fig8}
\end{figure}



%


\begin{figure}[ht!]
\vspace{-35mm}
  \hspace{-5mm}~\includegraphics[scale=.4]{./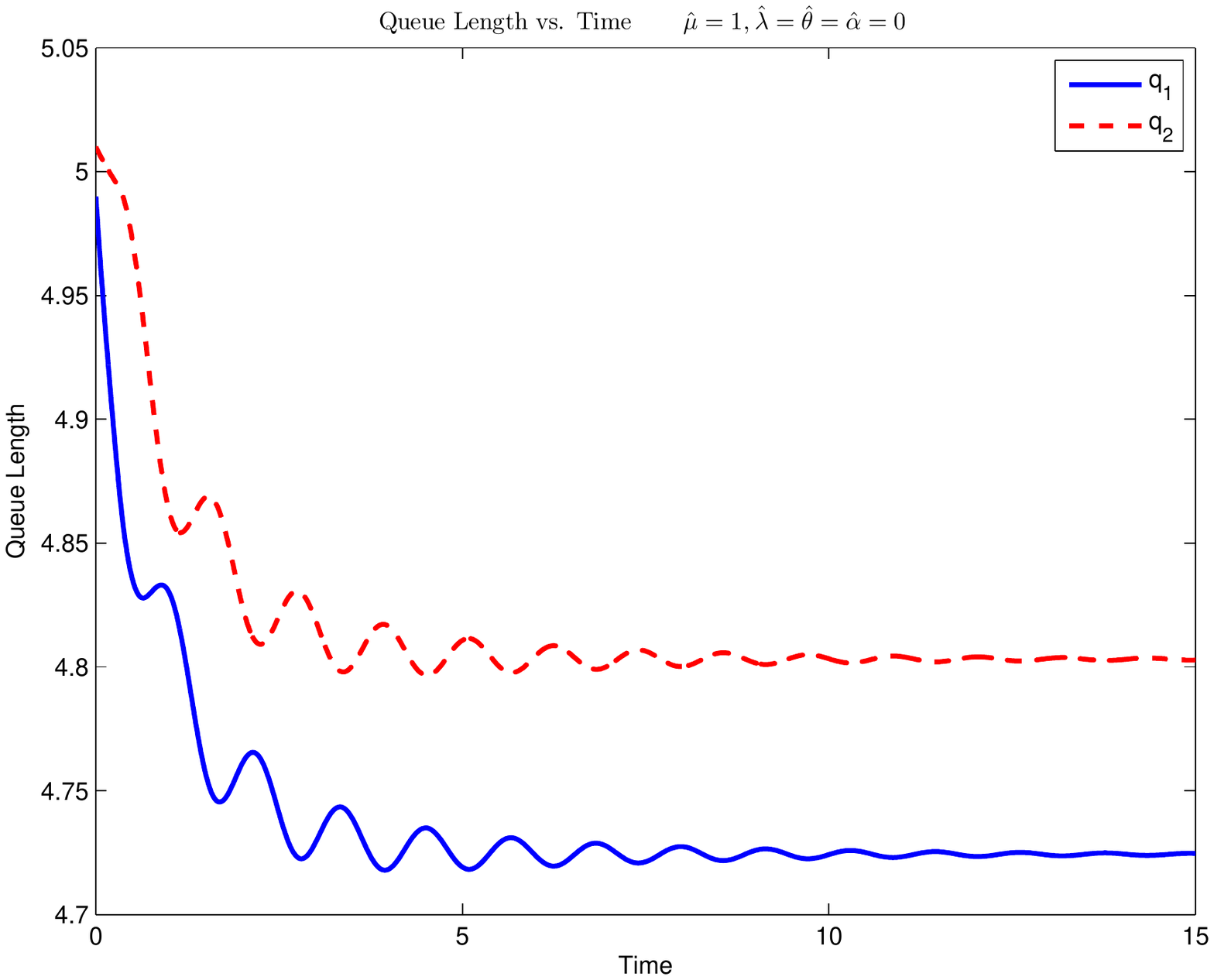}~\hspace{-10mm}~\includegraphics[scale=.4]{./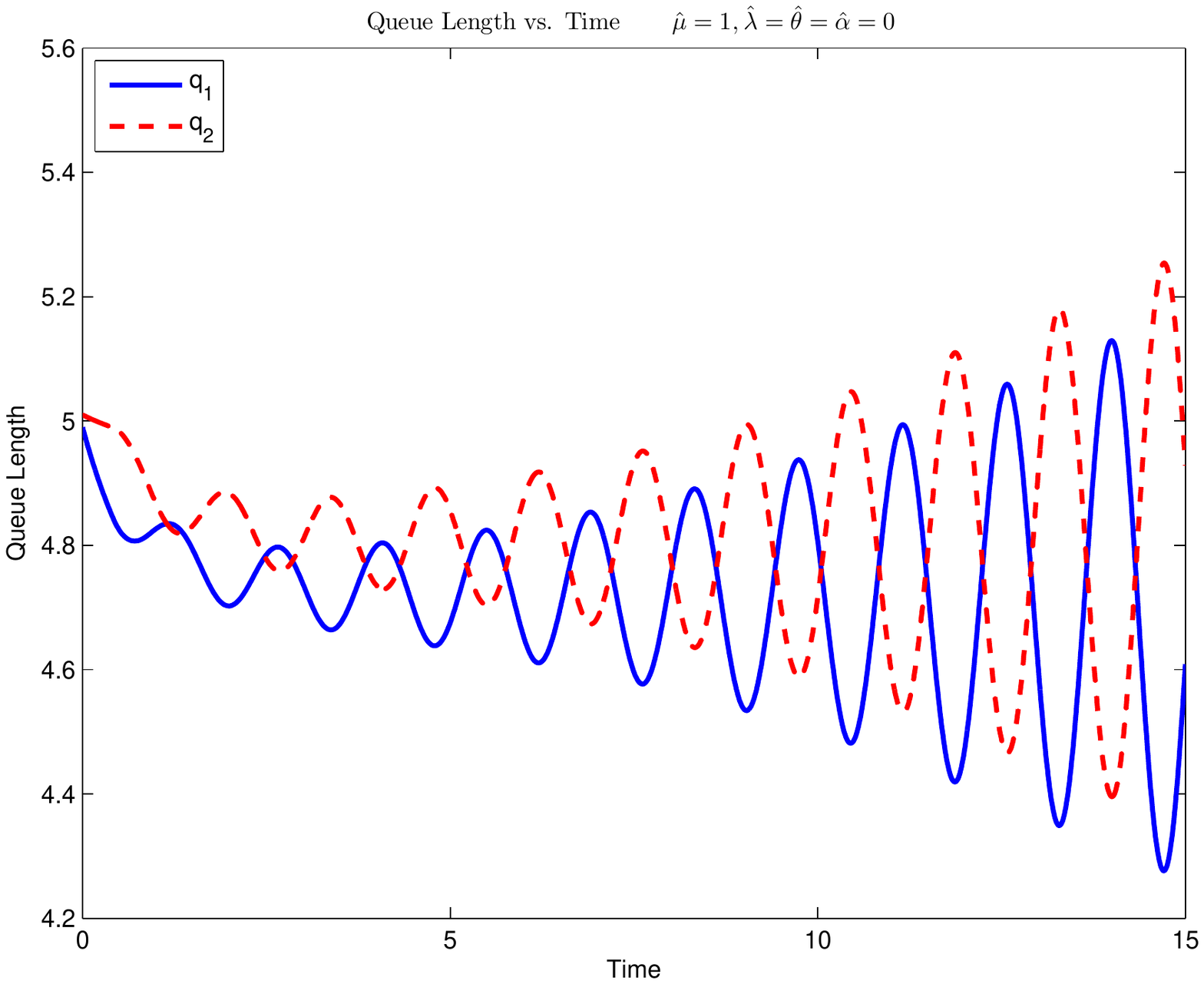}
\vspace{-30mm}
  \caption{$\hat{\mu} = 1, \hat{\lambda}  = \hat{\theta} = \hat{\alpha} = 0$, $\Delta_{\text{mod}} \approx .3646$
$\epsilon = .1, \lambda = 10, \mu=1, \theta=1, \alpha=0$\\ On $[-\Delta, 0]$, $q_1 = 4.99$, $q_2 = 5.01$, Left: $\Delta = \Delta_{\text{mod}} - .05$, Right: $\Delta = \Delta_{\text{mod}} + .05$}
\label{Fig9}
\end{figure}



%


\begin{figure}[ht!]
\vspace{-35mm}
  \hspace{-5mm}~\includegraphics[scale=.4]{./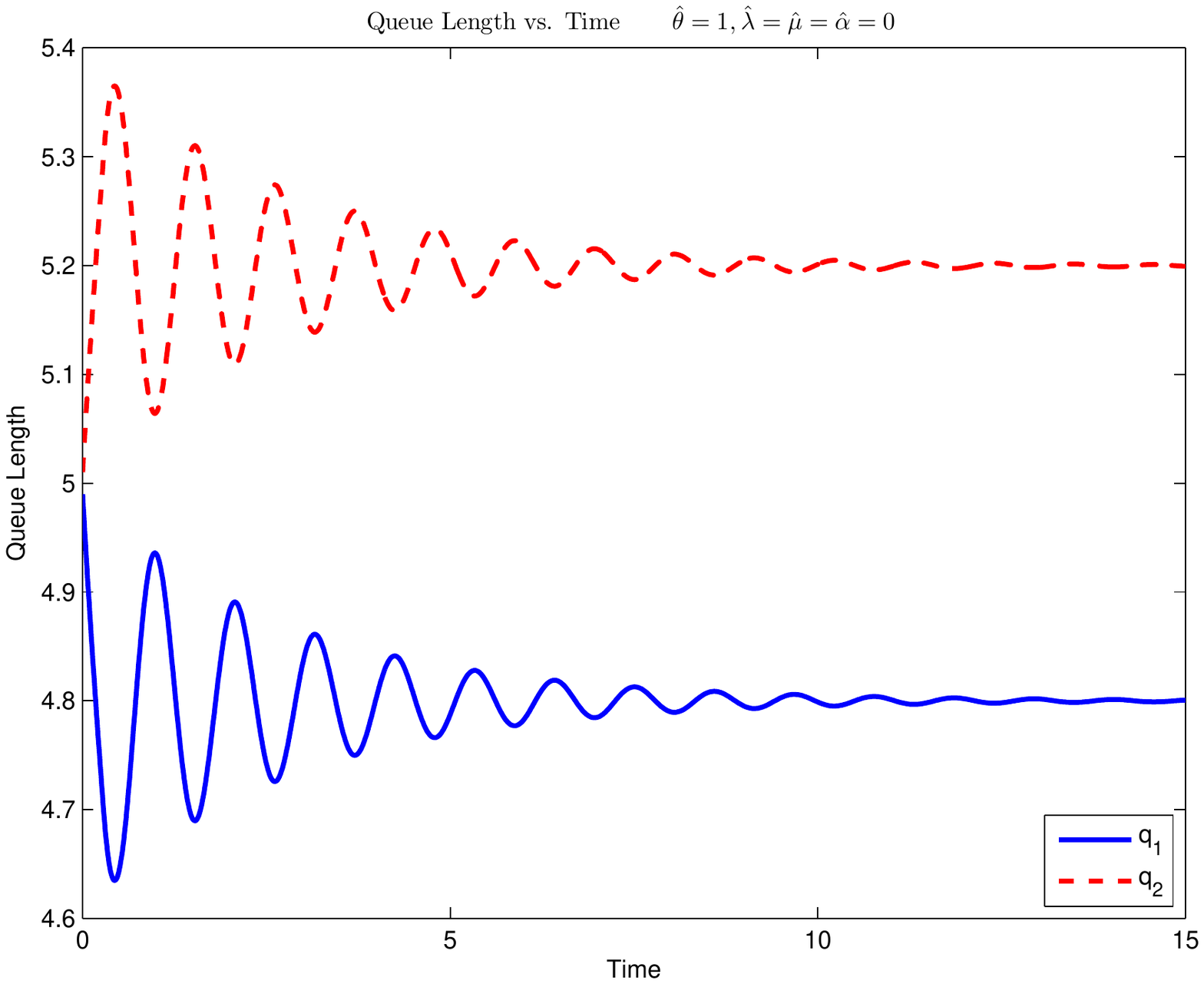}~\hspace{-10mm}~\includegraphics[scale=.4]{./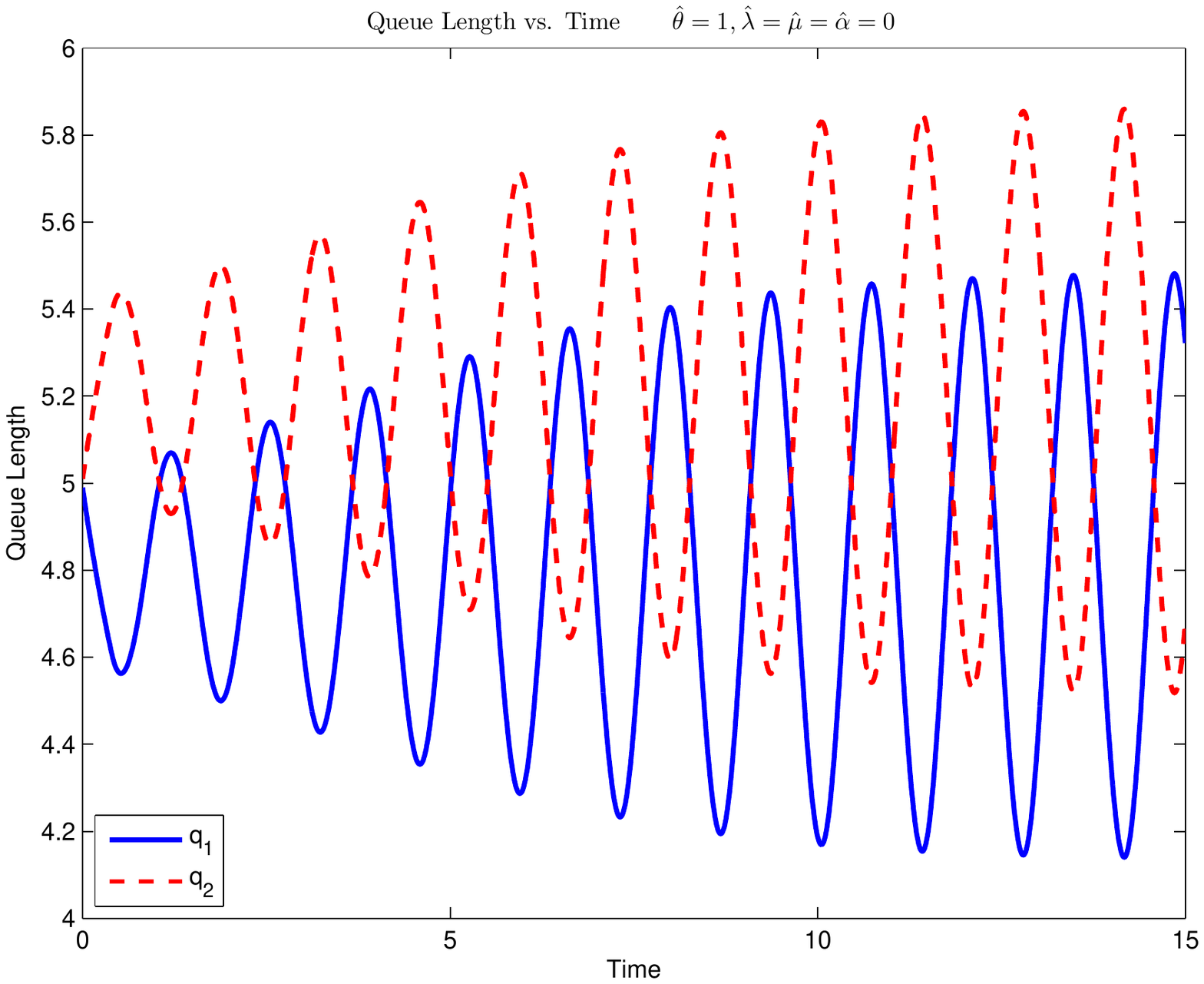}
\vspace{-30mm}
  \caption{$\hat{\theta} = 1, \hat{\lambda} = \hat{\mu} = \hat{\alpha} = 0$, $\Delta_{\text{mod}} \approx .3408$ $\epsilon = .1, \lambda = 10, \mu=1, \theta=1, \alpha=0$\\ On $[-\Delta, 0]$, $q_1 = 4.99$, $q_2 = 5.01$, Left: $\Delta = \Delta_{\text{mod}} - .05$, Right: $\Delta = \Delta_{\text{mod}} + .05$}
\label{Fig10}
\end{figure}



%


\begin{figure}[ht!]
\vspace{-35mm}
  \hspace{-5mm}~\includegraphics[scale=.4]{./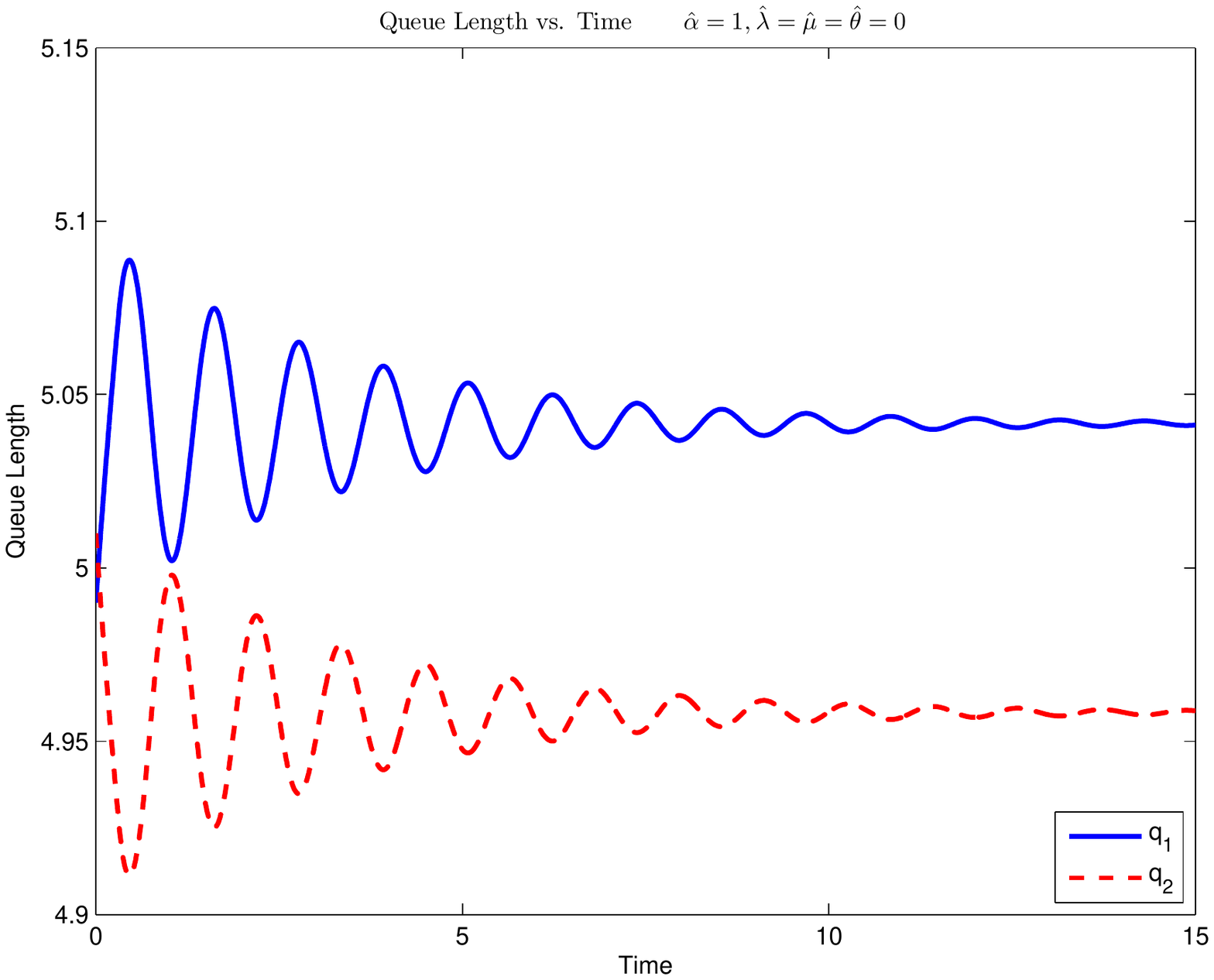}~\hspace{-10mm}~\includegraphics[scale=.4]{./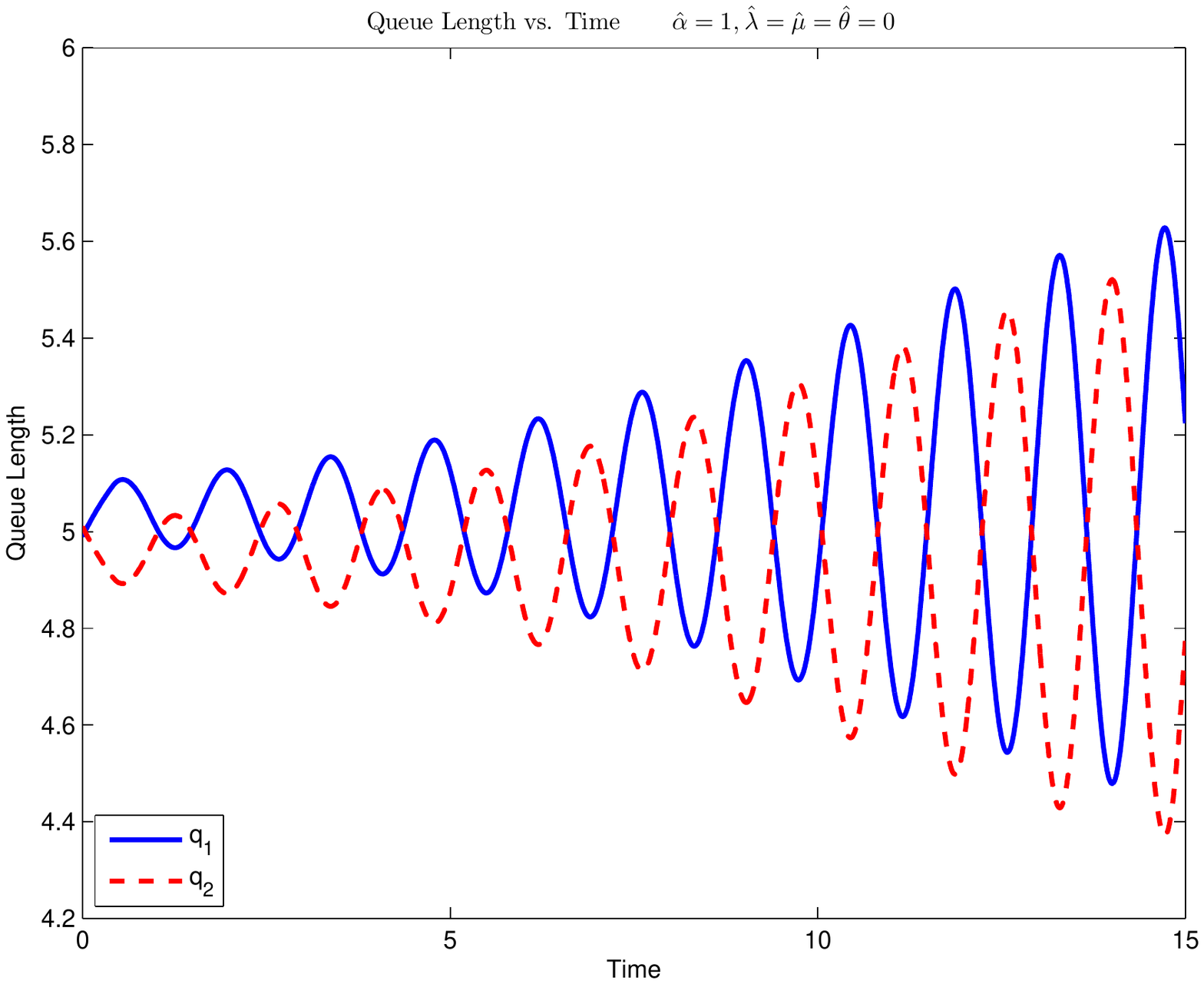}
\vspace{-30mm}
  \caption{$\hat{\alpha} = 1, \hat{\lambda} = \hat{\mu} = \hat{\theta} = 0$, $\Delta_{\text{mod}} \approx .3617$
$\epsilon = .1, \lambda = 10, \mu=1, \theta=1, \alpha=0$\\ On $[-\Delta, 0]$, $q_1 = 4.99$, $q_2 = 5.01$, Left: $\Delta = \Delta_{\text{mod}} - .05$, Right: $\Delta = \Delta_{\text{mod}} + .05$}
\label{Fig11}
\end{figure}

%


%


\begin{figure}[ht!]
\vspace{-35mm}
  \hspace{-5mm}~\includegraphics[scale=.4]{./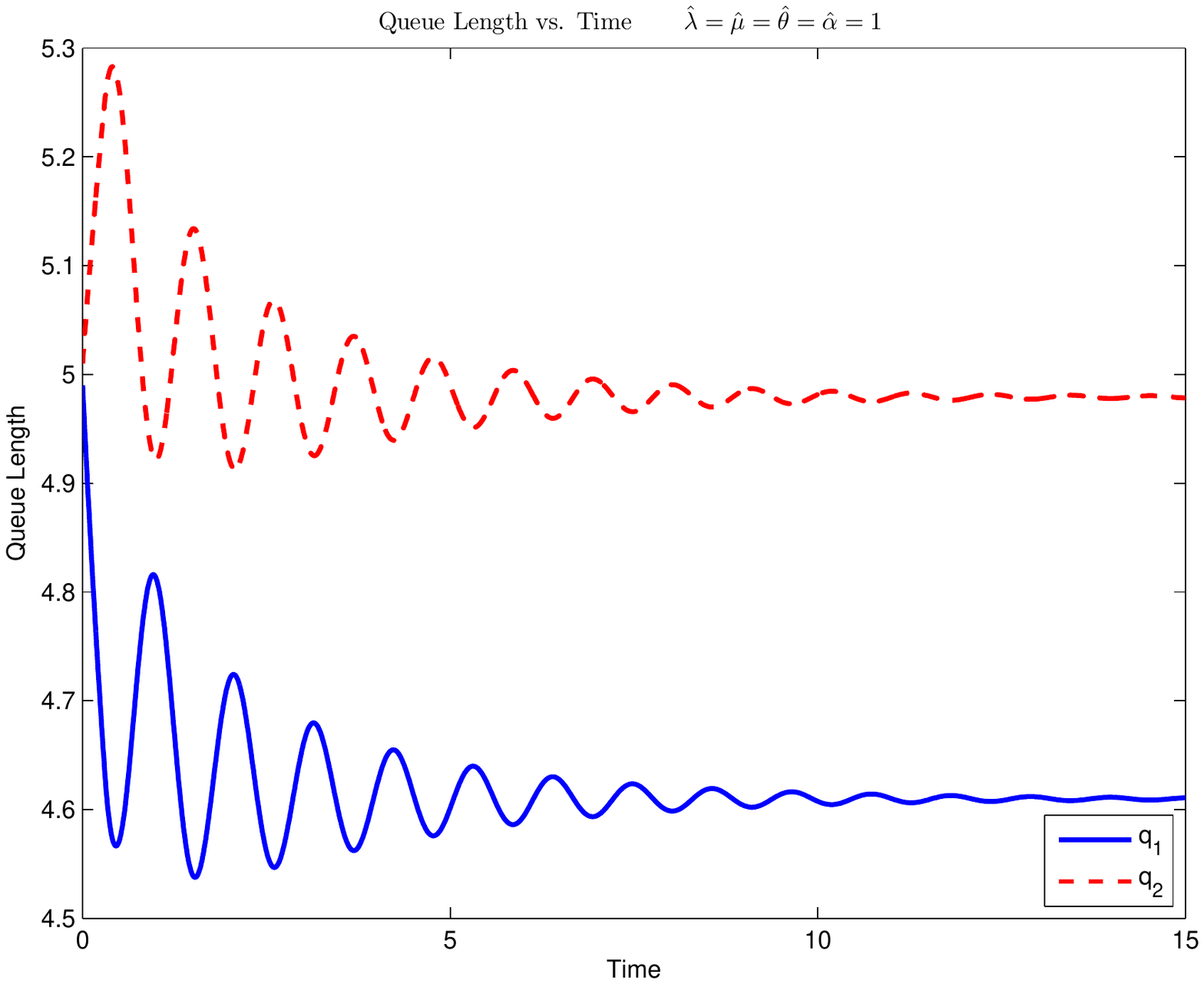}~\hspace{-10mm}~\includegraphics[scale=.4]{./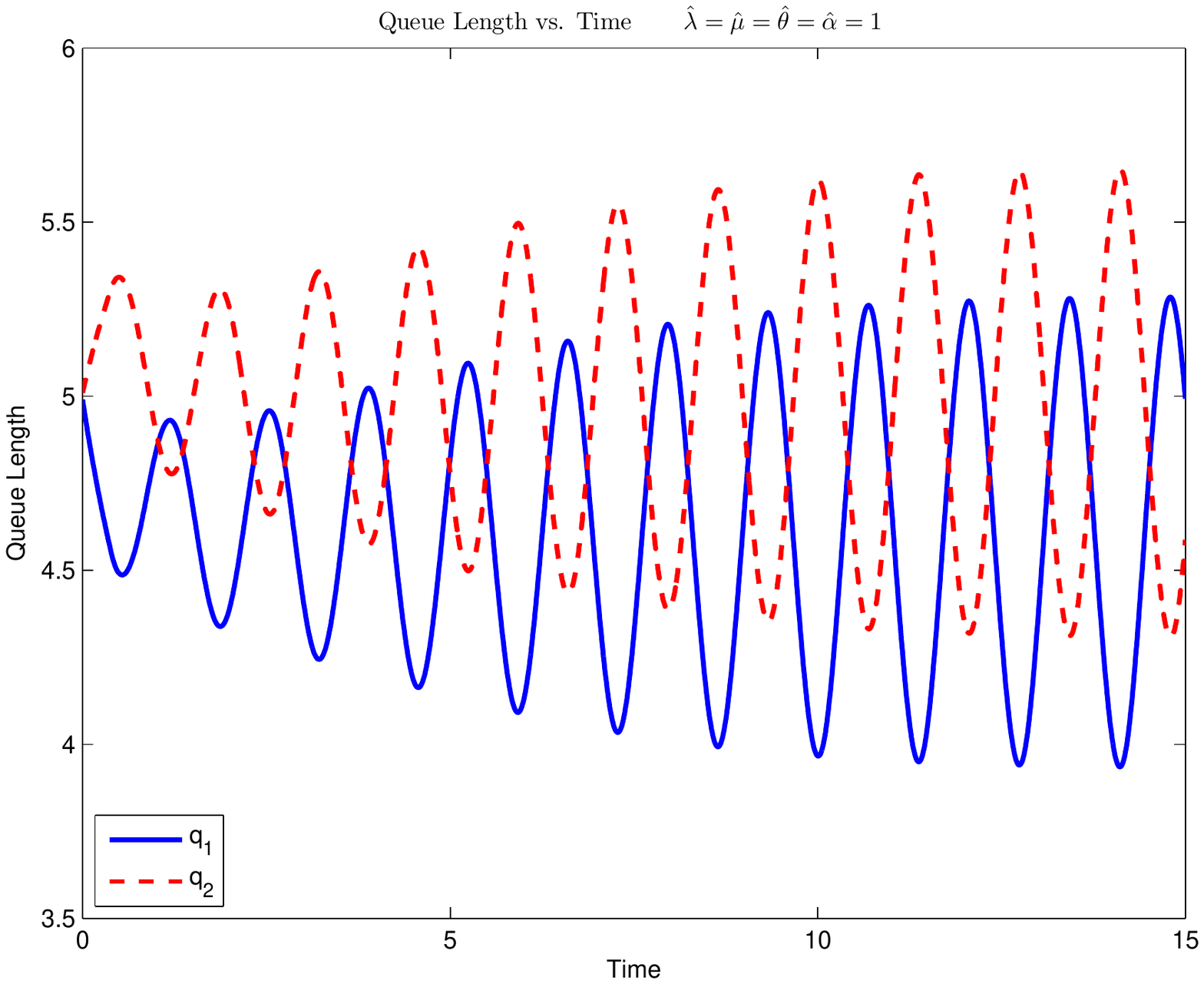}
\vspace{-30mm}
  \caption{$\hat{\lambda} = \hat{\mu} = \hat{\theta} = \hat{\alpha} = 1$, $\Delta_{\text{mod}} \approx .3416$
$\epsilon = .1, \lambda = 10, \mu=1, \theta=1, \alpha=0$\\ On $[-\Delta, 0]$, $q_1 = 4.99$, $q_2 = 5.01$, Left: $\Delta = \Delta_{\text{mod}} - .05$, Right: $\Delta = \Delta_{\text{mod}} + .05$}
\label{Fig12}
\end{figure}

%



\section{Amplitude of Limit Cycle} \label{sec_amplitude}

In the previous section, we observed that increasing the delay past the critical value $\Delta_{\text{mod}}$ causes oscillations in the queue lengths and ultimately gives rise to a limit cycle. In this section, we aim to approximate the amplitude of the limit cycle when the delay is close to $\Delta_{\text{mod}}$. To do this we will resort to using Lindstedt's method.

\begin{theorem}

Using Lindstedt's method, we obtain the following approximation, $\tilde{A}$, of the amplitude of limit cycles near the critical delay $$\tilde{A} = \sqrt{\Delta - \Delta_{\text{mod}}}  \  \sqrt{ \frac{8 (\bar{\lambda}^2 \bar{\theta}^2 - 4 \bar{\mu}^2)^2}{2 \bar{\lambda}^2 \bar{\mu} \theta^2 \bar{\theta}^2 - 8 \bar{\mu}^3 \theta^2 + \left[ \lambda \theta^2 \bar{\lambda} \bar{\theta} + \frac{4 \lambda \theta^2}{\bar{\lambda} \bar{\theta}} \bar{\mu}^2 (\theta - 1)  \right] \sqrt{\bar{\lambda}^2 \bar{\theta}^2 - 4 \bar{\mu}^2} \arccos \left(- \frac{2 \bar{\mu}}{\bar{\lambda} \bar{\theta}} \right)     }   }.$$

\end{theorem}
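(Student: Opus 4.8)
The plan is to reduce the two-dimensional nonlinear DDE \eqref{perturbed equation 1}--\eqref{perturbed equation 2} to a single scalar delay equation and then run \emph{Lindstedt's method} on it. First I would shift coordinates to the approximate equilibrium $(q_1^*,q_2^*)$ of Theorem~\ref{equilibrium theorem}, writing $q_i = q_i^* + u_i$, and pass to the sum/difference variables $v_1 = u_1 + u_2$, $v_2 = u_1 - u_2$ exactly as in the proof of Theorem~\ref{delta mod theorem}. As there, the $v_1$-equation is, to the relevant order, linear with an exponentially decaying homogeneous part, so on the limit cycle $v_1$ is slaved to $v_2$ and feeds back into the $v_2$-dynamics only through terms that are of higher order in the cycle amplitude; at the order we need it can be carried along explicitly or dropped. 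What remains is a scalar DDE of the form $\dot v_2(t) + \mu\, v_2(t) + C\, v_2(t-\Delta) = g_2\, v_2(t-\Delta)^2 + g_3\, v_2(t-\Delta)^3 + \cdots$, whose linear part reproduces, after the identification $\bar\lambda = \lambda + \tfrac12\epsilon\hat\lambda$, $\bar\theta = \theta + \tfrac12\epsilon\hat\theta$, $\bar\mu = \mu + \tfrac12\epsilon\hat\mu$, the characteristic roots $\pm i\omega_{\text{mod}}$ of Theorem~\ref{delta mod theorem}; here $g_2$ is $O(\epsilon)$ (it vanishes in the symmetric model because the logistic function is odd about its midpoint, so the symmetric equilibrium is a degenerate point of its expansion) and $g_3$ is $O(1)$.

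Next I would apply Lindstedt's method to this scalar DDE. Introduce a bookkeeping/amplitude parameter $\varepsilon_b$, strain time by $s = \Omega t$ with $\Omega = \omega_{\text{mod}} + \varepsilon_b\,\omega^{(1)} + \varepsilon_b^2\,\omega^{(2)} + \cdots$, expand $v_2 = \varepsilon_b P_1(s) + \varepsilon_b^2 P_2(s) + \varepsilon_b^3 P_3(s) + \cdots$, and detune the delay as $\Delta = \Delta_{\text{mod}} + \varepsilon_b^2 \Delta^{(2)} + \cdots$, consistent with the anticipated scaling $(\text{amplitude})\sim\sqrt{\Delta - \Delta_{\text{mod}}}$. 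The genuinely delicate bookkeeping step is rewriting the delayed term under the strained time, $v_2(t-\Delta) = v_2\big((s - \Omega\Delta)/\Omega\big)$, so that $\Omega\Delta$ must itself be expanded in $\varepsilon_b$ and the shifted argument Taylor-expanded, mixing the frequency correction, the delay detuning, and the higher harmonics. Substituting and collecting powers of $\varepsilon_b$: at $O(\varepsilon_b)$ one recovers the linear problem and sets $P_1(s) = \cos s$ (the amplitude is carried by $\varepsilon_b$, the phase fixed by this choice); at $O(\varepsilon_b^2)$ the quadratic term $g_2\, P_1(s-\omega_{\text{mod}}\Delta_{\text{mod}})^2$, together with the slaved $v_1$ quadratic coupling, forces a non-resonant response $P_2 = (\text{const}) + (\text{const})\cos 2s + (\text{const})\sin 2s$; at $O(\varepsilon_b^3)$ the coefficients of the resonant terms $\cos s$ and $\sin s$ on the right-hand side must be set to zero.

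The two resulting real solvability conditions do the work: one determines the frequency correction $\omega^{(2)}$, and the other yields a relation of the form $(\text{effective cubic coefficient})\cdot\varepsilon_b^2 = (\text{detuning coefficient})\cdot\Delta^{(2)}$, i.e. $(\text{amplitude})^2$ is proportional to $\Delta - \Delta_{\text{mod}}$. The effective cubic coefficient is $g_3$ together with the standard second-harmonic contribution of $g_2$ (quadratic terms feeding the cubic Hopf coefficient through $P_2$), while the detuning coefficient comes from differentiating the delayed term with respect to $\Delta$ at $\Delta_{\text{mod}}$; both are explicit functions of the model parameters. Finally I would substitute $\omega_{\text{mod}} = \tfrac12\sqrt{\bar\lambda^2\bar\theta^2 - 4\bar\mu^2}$ and $\omega_{\text{mod}}\Delta_{\text{mod}} = \arccos\!\big(-2\bar\mu/(\bar\lambda\bar\theta)\big)$ --- which is precisely how the $\arccos$ and the square roots enter, via $\cos(\omega_{\text{mod}}\Delta_{\text{mod}})$, $\sin(\omega_{\text{mod}}\Delta_{\text{mod}})$, and the $\Delta_{\text{mod}}$ multiplying the derivative-of-delay terms --- and simplify to the stated closed form $\tilde A = \sqrt{\Delta - \Delta_{\text{mod}}}\,\sqrt{\,\cdots\,}$. (The formula implicitly assumes the sign of the cubic coefficient is such that the bifurcation is supercritical, so that a stable cycle exists for $\Delta > \Delta_{\text{mod}}$.)

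I expect the \textbf{main obstacle} to be the third-order bookkeeping of the delayed argument combined with the asymmetry: one must track simultaneously and without error (i) the $O(\varepsilon_b)$ frequency correction and $O(\varepsilon_b^2)$ delay detuning inside $v_2(t-\Delta)$, (ii) the way the $O(\epsilon)$ quadratic nonlinearity $g_2$ --- absent in the symmetric model --- generates the $P_2$ harmonics that then contribute to the $O(\varepsilon_b^3)$ resonance, and (iii) the consistency of slaving (or discarding) the stable $v_1$-direction at exactly the order at which it could otherwise matter. Getting the resonant $\cos s$/$\sin s$ coefficients exactly right through all of this, and then coaxing the resulting trigonometric/algebraic expression into the compact $\arccos$ form, is where essentially all the labor lies; the remainder is the routine Lindstedt skeleton.
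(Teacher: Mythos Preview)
Your outline is sound and would arrive at the correct leading-order amplitude, but it is more laborious than the route the paper actually takes. The key simplification you overlook is this: since you yourself observe that the quadratic coefficient $g_2$ is $O(\epsilon)$ (it vanishes in the symmetric model), its contribution to the effective cubic Hopf coefficient via the second-harmonic feedback is $O(\epsilon^2)$ and hence negligible at the accuracy being claimed. The paper exploits this by expanding $v_i = v_{i,0} + \epsilon\,v_{i,1} + \cdots$ in the \emph{asymmetry} parameter $\epsilon$ \emph{before} running Lindstedt, and then working only with the $O(1)$ equation for $v_{2,0}$, namely
\[
\dot v_{2,0}(t) + \frac{\lambda\theta}{2}\,v_{2,0}(t-\Delta) - \frac{\lambda\theta^3}{24}\,v_{2,0}^3(t-\Delta) + \mu\,v_{2,0}(t) = 0,
\]
which has \emph{no quadratic term at all}. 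On this purely cubic scalar DDE the paper rescales $v_{2,0} = \sqrt{\epsilon}\,v$, sets $\tau = \omega t$, and expands $v = v_0 + \epsilon v_1$, $\Delta = \Delta_{\text{mod}} + \epsilon\Delta_1$, $\omega = \omega_{\text{mod}} + \epsilon\omega_1$; the cubic then enters directly at the first correction, so a \emph{single} solvability step (killing the secular $\sin\tau$, $\cos\tau$ terms in the $v_1$ equation) already yields $A$ and $\omega_1$. There is no $P_2$ to compute and no $O(\varepsilon_b^3)$ level to reach. The barred parameters enter only because the paper takes $\Delta_0 = \Delta_{\text{mod}}$, $\omega_0 = \omega_{\text{mod}}$ as the expansion point even though the $v_{2,0}$ coefficients are unbarred---which is exactly why the final formula mixes $\theta$ with $\bar\lambda,\bar\theta,\bar\mu$. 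So the ``main obstacle'' you flag---the third-order bookkeeping of the quadratic through the second harmonic and the slaved $v_1$---simply does not arise in the paper's derivation; your plan would work but does considerably more than is needed for the stated $O(1)$ result.
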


\begin{proof}

We will expand the system of Equations \ref{perturbed equation 1}-\ref{perturbed equation 2} about the approximate  equilibrium point $$(q_1^*, q_2^*) = \left(\frac{\lambda}{2 \mu} + a\epsilon + O(\epsilon^2), \frac{\lambda}{2 \mu} + b \epsilon + O(\epsilon^2)\right)$$ where $a$ and $b$ are as defined in Theorem \ref{equilibrium theorem}.  However, unlike the equilibrium and stability calculations, we need to Taylor expand to third order in order to find the amplitude. By Taylor expanding to third order (cubic) and dropping $O(\epsilon^2)$ terms leaves us with the following cubic system of DDEs

\begin{align*}
\overset{\bullet}{u}_1(t) &= -(\mu + \hat{\mu} \epsilon) u_1(t) - \frac{1}{4} \left( \lambda \theta + (\lambda \hat{\theta} + \hat{\lambda} \theta) \epsilon \right) u_1(t - \Delta) + \frac{1}{4} \left(  \lambda \theta + \theta \hat{\lambda} \epsilon \right)u_2(t - \Delta)\\ &+ \frac{1}{32 \mu} (\lambda^2 \theta^2 \hat{\theta} + 2 \mu (a-b) \lambda \theta^3 - 2 \mu \hat{\alpha} \lambda \theta^2) \epsilon (u_1(t- \Delta) - u_2(t - \Delta))^2\\ &+  \frac{1}{48} \left( \lambda \theta^3 + (3 \lambda \theta^2 \hat{\theta} + \theta^3 \hat{\lambda}) \epsilon \right) u_1^3(t-\Delta) - \frac{1}{48} \left( \lambda \theta^3 + \theta^3 \hat{\lambda} \epsilon  \right) u_2^3(t - \Delta)\\ &- \frac{1}{16} (\lambda \theta^3 + (2 \lambda \theta^2 \hat{\theta} + \theta^3 \hat{\lambda})\epsilon) u_1^2(t - \Delta) u_2(t-\Delta) + \frac{1}{16} (\lambda \theta^3 + (\lambda \theta^2 \hat{\theta} + \theta^3 \hat{\lambda}) \epsilon) u_1(t - \Delta) u_2^2(t-\Delta)\\
\overset{\bullet}{u}_2(t) & = - \mu u_2(t) + \frac{1}{4} (\lambda \theta + \lambda \hat{\theta} \epsilon) u_1(t-\Delta) - \frac{1}{4} \lambda \theta u_2(t-\Delta)\\ &+ \frac{1}{32 \mu} (\lambda^2 \theta^2 \hat{\theta} + 2 \mu (a-b) \lambda \theta^3 - 2 \mu \hat{\alpha} \lambda \theta^2) \epsilon (u_1^2(t-\Delta) + 2 u_1(t-\Delta) u_2(t-\Delta) - u_2^2(t-\Delta)) \\ &- \frac{1}{48} (\lambda \theta^2 + 3 \lambda \theta^2 \hat{\theta} \epsilon) u_1^3(t - \Delta) + \frac{1}{48} \lambda \theta^3 u_2^3(t - \Delta) \\ &+ \frac{1}{16} (\lambda \theta^3 + 2 \lambda \theta^2 \hat{\theta} \epsilon) u_1^2(t - \Delta) u_2(t - \Delta) - \frac{1}{16} (\lambda \theta^3 + \lambda \theta^2 \hat{\theta} \epsilon) u_1(t - \Delta) u_2^2(t - \Delta).
\end{align*}

\noindent As we did in the linear case, we make the change of variables $$v_1(t) = u_1(t) + u_2(t), \hspace{5mm} v_2(t) = u_1(t) - u_2(t)$$ and we let $$v_1(t) = v_{1,0}(t) + \epsilon v_{1,1}(t) + O(\epsilon^2)$$ $$v_2(t) = v_{2,0}(t) + \epsilon v_{2,1}(t) + O(\epsilon^2).$$ Collecting $O(1)$ terms gives us the following two equations

\begin{align}
\overset{\bullet}{v}_{1,0}(t) + \mu v_{1,0}(t) &= 0 \label{v10 equation 3}\\
\overset{\bullet}{v}_{2,0}(t) + \frac{\lambda \theta}{2} v_{2,0}(t-\Delta) - \frac{\lambda \theta^3}{24} v_{2,0}^3(t-\Delta)+ \mu v_{2,0}(t) &= 0 \label{v20 equation 3}
\end{align}

\noindent and collecting $O(\epsilon)$ terms gives us 

\begin{align}
\overset{\bullet}{v}_{1,1}(t) &+ \mu v_{1,1}(t) = - \frac{\theta \hat{\lambda}}{4} v_{2,0}(t - \Delta) + \frac{\theta^3 \hat{\lambda}}{48} v_{2,0}^3(t - \Delta) - \frac{\hat{\mu}}{2} (v_{1,0}(t) + v_{2,0}(t) ) \label{v11 equation} \\
\overset{\bullet}{v}_{2,1}(t) &+ \frac{\lambda \theta}{2} v_{2,1}(t - \Delta) - \frac{\lambda \theta^3}{8} v_{2,0}^2(t - \Delta) v_{2,1}(t - \Delta) + \mu v_{2,1} = - \left( \frac{\theta \hat{\lambda}}{4} + \frac{\lambda \hat{\theta}}{4} \right) v_{2,0}(t - \Delta) \nonumber \\ &- \frac{\lambda \hat{\theta}}{4} v_{1,0}(t - \Delta)  + \left( \frac{2 \lambda \theta^3 \mu (a - b) + \lambda^2 \theta^2 \hat{\theta} - 2 \lambda \theta^2 \mu \hat{\alpha}}{16 \mu}  \right) v_{2,0}^2(t - \Delta) \nonumber \\ &+ \frac{\theta^3 \hat{\lambda}}{48} v_{2,0}^3(t -\Delta) + \frac{\lambda \theta^2 \hat{\theta}}{16} [ v_{1,0}(t - \Delta) v_{2,0}^2(t - \Delta) + v_{2,0}^3(t - \Delta)   ] - \frac{\hat{\mu}}{2}(v_{1,0}(t) + v_{2,0}(t)).
\end{align}

\noindent We observe that we can directly solve Equation \ref{v10 equation 3}  and its solution is given by $v_{1,0}(t) = \hat{c} \exp(- \mu t)$, for some constant $\hat{c}$, meaning $$v_{1}(t) = u_1(t) + u_2(t) = \hat{c} \exp(- \mu t) + O(\epsilon)$$ so that for small $\epsilon$ and large time, we have that $$u_1(t) \approx - u_2(t)$$ which tells us that the amplitudes of the queue lengths are approximately symmetric (which is expected given that we're perturbing a symmetric model) and thus $$v_{2,0}(t) = u_1(t) - u_2(t) + O(\epsilon) \approx 2 u_1(t) $$ gives us approximately twice the amplitude of the limit cycle. Because of this, we narrow our interest to Equation \ref{v20 equation 3}. Since we are interested in the amplitudes of limit cycles near the bifurcation point, we are working under the assumption that $\Delta - \Delta_{\text{mod}}$ is small. Letting  $\Delta_{0} = \Delta_{\text{mod}}$ and $\omega_0 = \omega_{\text{mod}}$, we use the following transformations.

 $$\tau = \omega t, \hspace{5mm} v_{2,0}(t) = \sqrt{\epsilon} v(t)$$ $$v(t) = v_0(t) + \epsilon v_1(t) + \cdots, \hspace{5mm} \Delta = \Delta_0 + \epsilon \Delta_1 + \cdots, \hspace{5mm} \omega = \omega_0 + \epsilon \omega_1 + \cdots$$ Matching powers of $\epsilon$ and dropping higher ordered terms, we get two equations.

\begin{align}
\omega_0 v_0'(\tau) + \frac{\lambda \theta}{2} v_0(\tau - \omega_0 \Delta_0) + \mu v_0(\tau) &= 0 \label{eqn_2} \\ 
\omega_0 v_1'(\tau) + \frac{\lambda \theta}{2} v_1(\tau - \omega_0 \Delta_0) + \mu v_1(\tau) &= - \omega_1 v_0'(\tau) \nonumber \\ &+ \frac{\lambda \theta}{2}(\omega_0 \Delta_1 + \omega_1 \Delta_0) v_0'(\tau - \omega_0 \Delta_0) \nonumber \\ &+ \frac{\lambda \theta^3}{24} v_0^3(\tau - \omega_0 \Delta_0) \label{lindstedt inhomogeneous equation 1}
\end{align}

\noindent Noting that $v_0(\tau) = A \sin(\tau)$ satisfies Equation \ref{eqn_2} and that the homogeneous form of Equation \ref{lindstedt inhomogeneous equation 1} is the same as that of Equation \ref{eqn_2}, we substitute in $v_0(\tau) = A \sin(\tau)$ into the inhomogeneity of Equation \ref{lindstedt inhomogeneous equation 1} and set the terms that would introduce secular terms in the particular solution equal to $0$. Doing this yields a system of two equations and two unknowns: $A$ and $\omega_1$. Solving for these unknowns, we obtain 

\begin{equation}
A = \sqrt{ \frac{8 \Delta_1 (\bar{\lambda}^2 \bar{\theta}^2 - 4 \bar{\mu}^2)^2}{2 \bar{\lambda}^2 \bar{\mu} \theta^2 \bar{\theta}^2 - 8 \bar{\mu}^3 \theta^2 + \left[ \lambda \theta^2 \bar{\lambda} \bar{\theta} + \frac{4 \lambda \theta^2}{\bar{\lambda} \bar{\theta}} \bar{\mu}^2 (\theta - 1)  \right] \sqrt{\bar{\lambda}^2 \bar{\theta}^2 - 4 \bar{\mu}^2} \arccos \left(- \frac{2 \bar{\mu}}{\bar{\lambda} \bar{\theta}} \right)     }   }
\end{equation} 
and 
\begin{equation}
\omega_1  =  - \frac{1}{\Delta_{0}} \left( \omega_0 \Delta_1 + \frac{A^2 \theta^2 \cos(\omega_0 \Delta_0)}{16 \sin(\omega_0 \Delta_0)}  \right)
\end{equation} 
where

$$\bar{\lambda} := \lambda + \frac{\hat{\lambda} \epsilon}{2}, \hspace{5mm} \bar{\mu} := \mu + \frac{\hat{\mu} \epsilon}{2}, \hspace{5mm} \bar{\theta} := \theta + \frac{\hat{\theta} \epsilon}{2}.$$ We assume that $\Delta - \Delta_0 \approx \epsilon$ which implies that $\Delta_1 \approx 1$ and since $v_{2,0}(t) = \sqrt{\epsilon} v(t)$, our approximation of the amplitude, $\tilde{A}$, is $$\tilde{A} = \sqrt{\Delta - \Delta_{\text{mod}}} \sqrt{ \frac{8 (\bar{\lambda}^2 \bar{\theta}^2 - 4 \bar{\mu}^2)^2}{2 \bar{\lambda}^2 \bar{\mu} \theta^2 \bar{\theta}^2 - 8 \bar{\mu}^3 \theta^2 + \left[ \lambda \theta^2 \bar{\lambda} \bar{\theta} + \frac{4 \lambda \theta^2}{\bar{\lambda} \bar{\theta}} \bar{\mu}^2 (\theta - 1)  \right] \sqrt{\bar{\lambda}^2 \bar{\theta}^2 - 4 \bar{\mu}^2} \arccos \left(- \frac{2 \bar{\mu}}{\bar{\lambda} \bar{\theta}} \right)     }   }.$$

\end{proof}

We now demonstrate numerically how well this approximation matches the actual amplitude of the limit cycle. In each figure below, we will plot the queue lengths against time and approximate the maximum and minimum values of each queue length by adding or subtracting $\frac{1}{2} \tilde{A}$ from the equilibrium for each queue. The cycle lines are the approximations corresponding to $q_1$ and the dashed green lines are the approximations corresponding to $q_2$.

\begin{figure}[ht!]
\vspace{-50mm}
  \hspace{-27mm}~\includegraphics[scale=.6]{./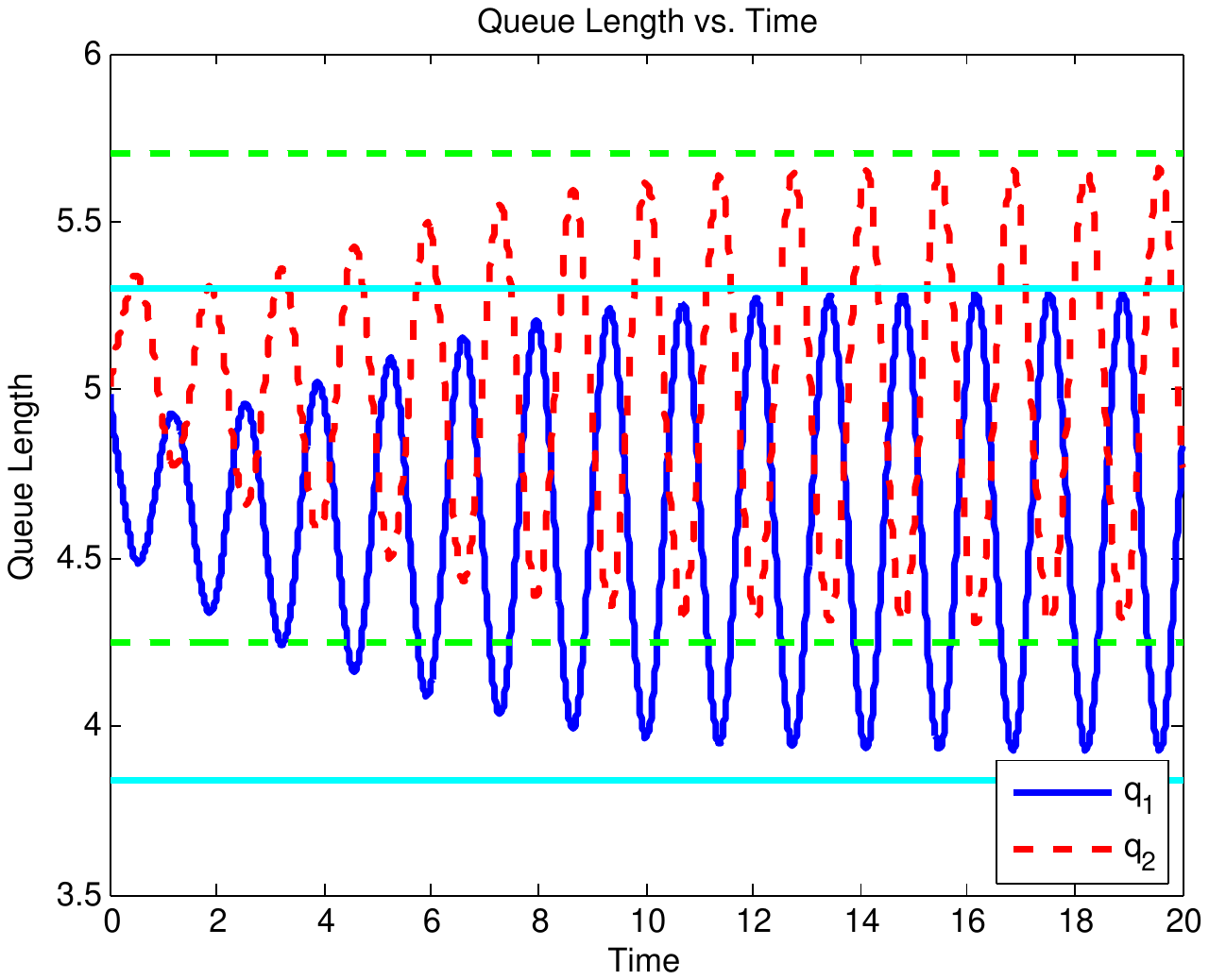}~\hspace{-40mm}~\includegraphics[scale=.6]{./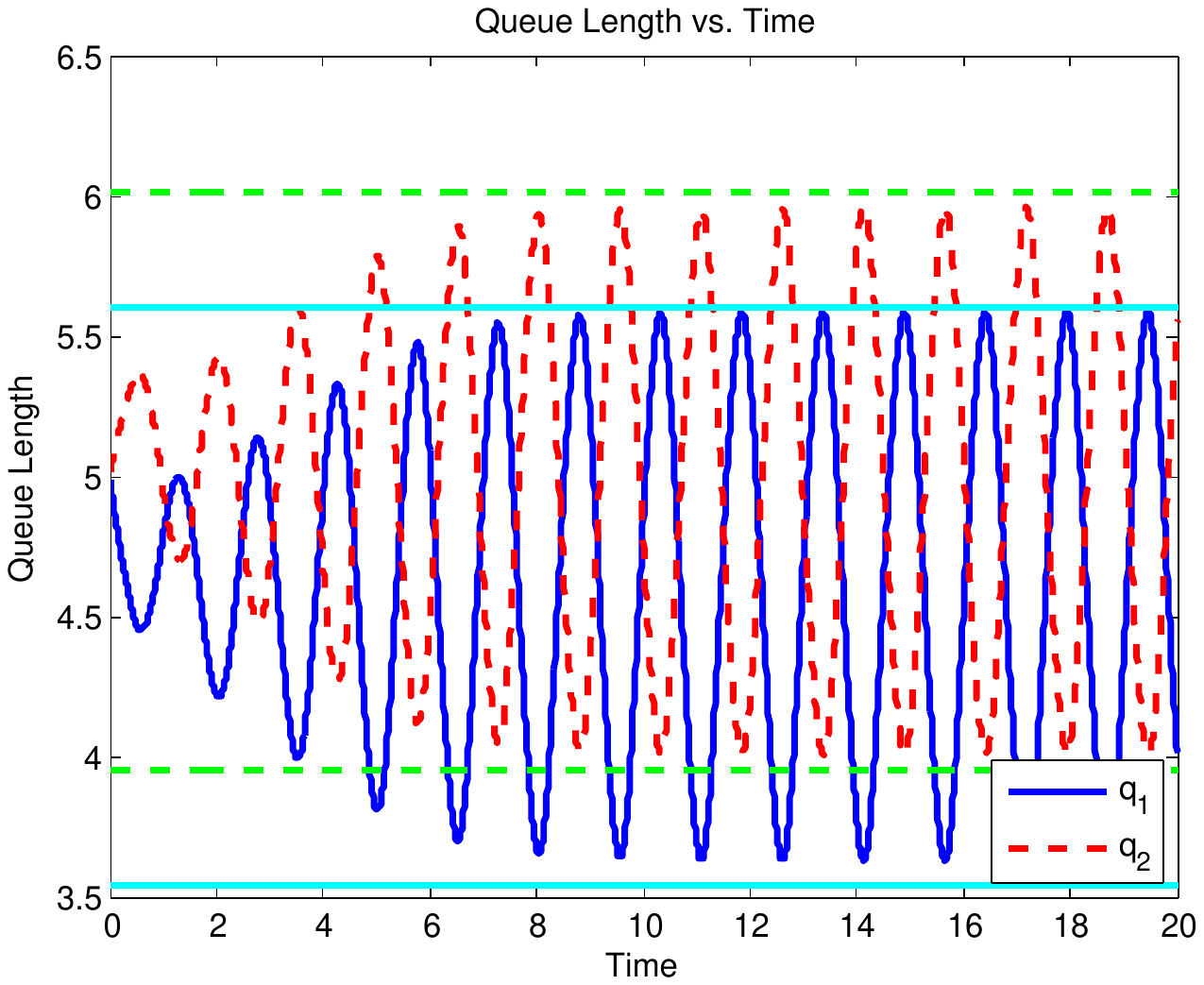}
\vspace{-55mm}
  \caption{ \\$\hat{\lambda} = \hat{\mu} = \hat{\theta} = \hat{\alpha} = 1, \epsilon = .1, \lambda=10, \mu=1, \theta=1, \alpha=1$, on $[-\Delta, 0]$ $q_1 = 4.99$ and $q_2 = 5.01$\\
Left: $\Delta - \Delta_{\text{mod}} = .05$, $ \text{Amplitudes:   } q_1 \approx  1.3593, q_2  \approx 1.3524$, $\text{Approximation } \approx 1.4562$\\
Right: $\Delta - \Delta_{\text{mod}} \approx .1$ $ \text{Amplitudes:   } q_1 \approx  1.9576, q_2  \approx 1.9503$, $\text{Approximation } \approx 2.0594$}
\label{Fig13}
\end{figure}

\begin{figure}[ht!]
\vspace{-50mm}
  \hspace{-27mm}~\includegraphics[scale=.6]{./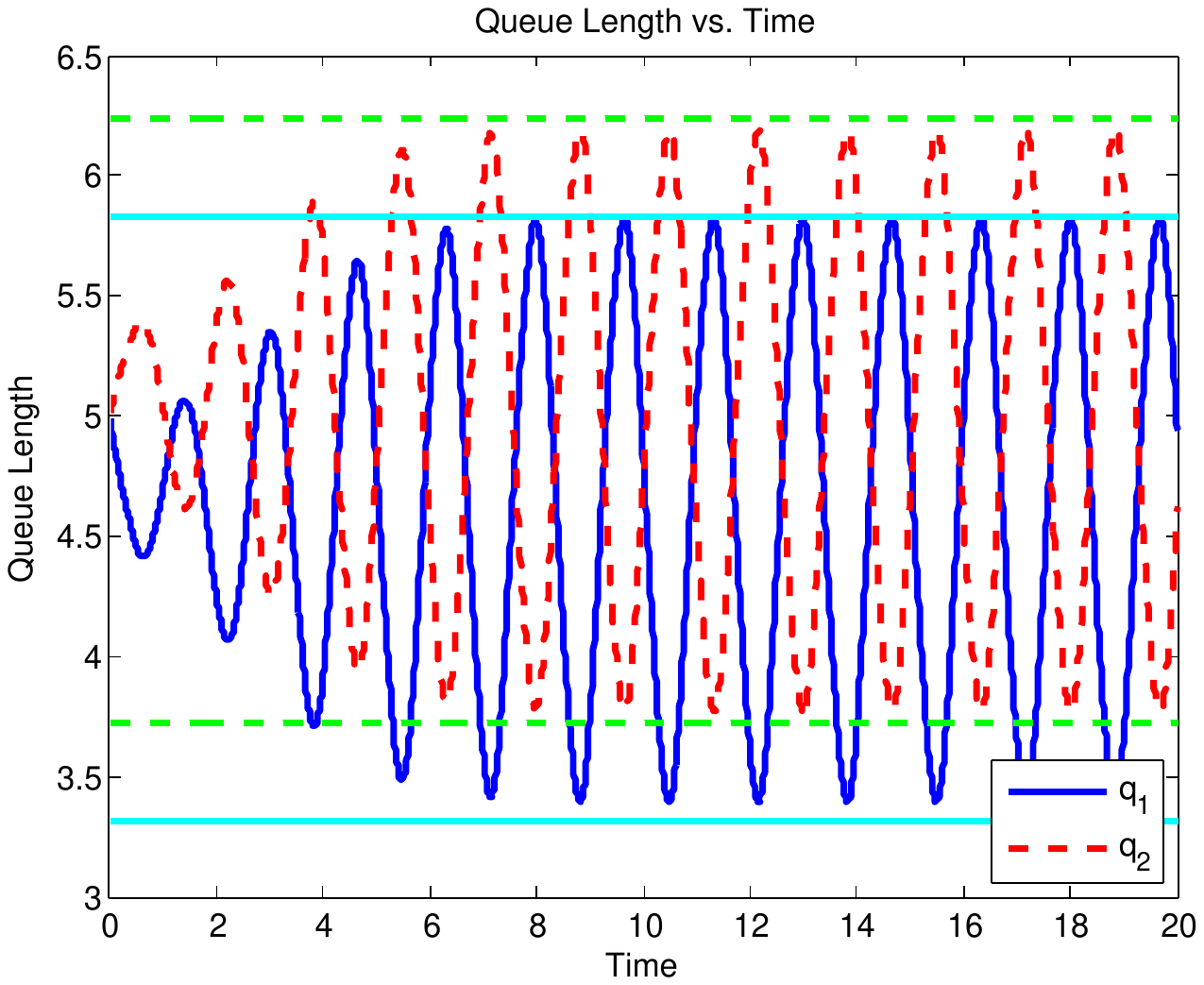}~\hspace{-40mm}~\includegraphics[scale=.6]{./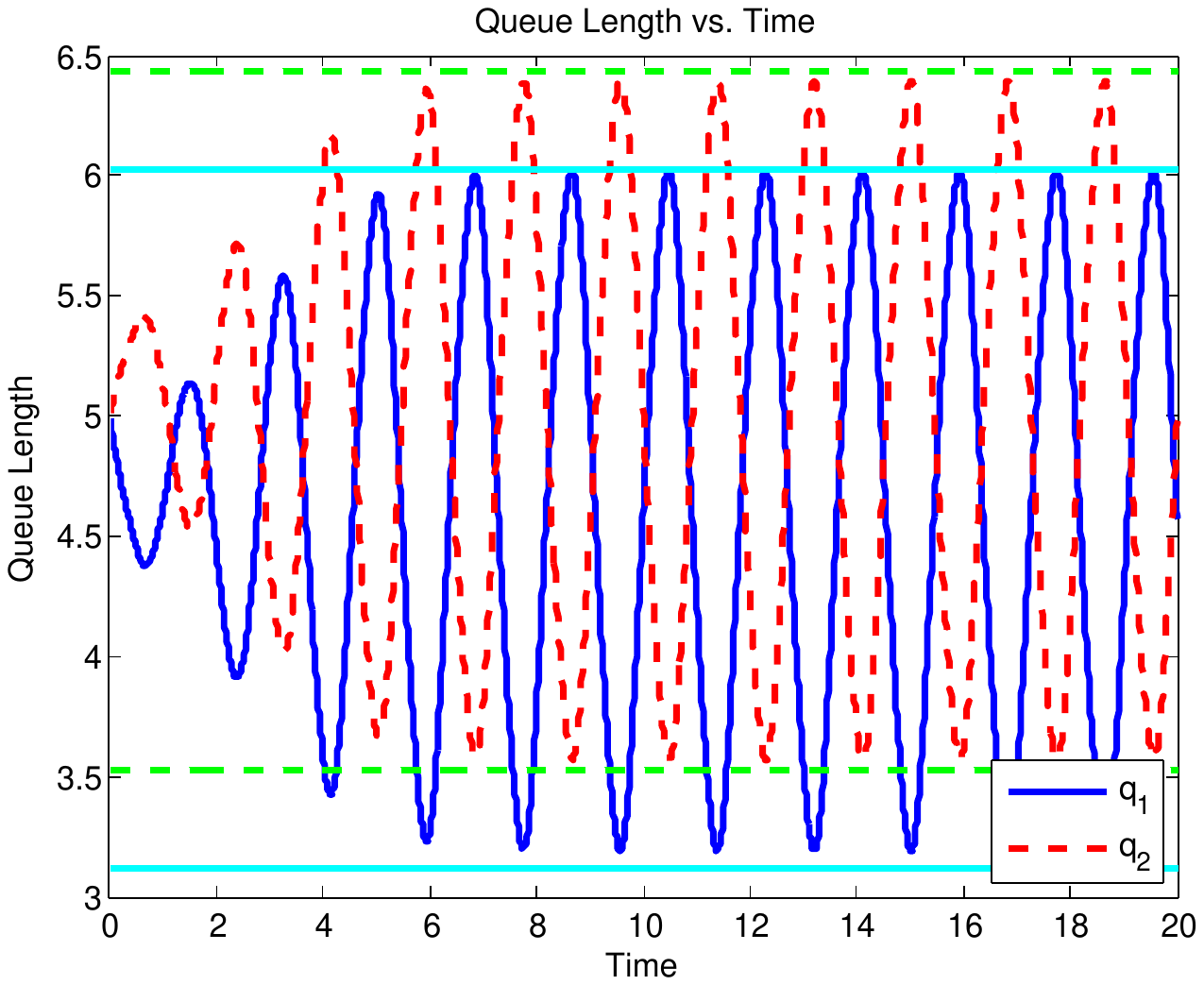}
\vspace{-55mm}
  \caption{ \\$\hat{\lambda} = \hat{\mu} = \hat{\theta} = \hat{\alpha} = 1, \epsilon = .1, \lambda=10, \mu=1, \theta=1, \alpha=1$, on $[-\Delta, 0]$ $q_1 = 4.99$ and $q_2 = 5.01$\\
Left: $\Delta - \Delta_{\text{mod}} = .15$, $ \text{Amplitudes:   } q_1 \approx  2.4265, q_2  \approx 2.4208$, $\text{Approximation } \approx 2.5222$\\
Right: $\Delta - \Delta_{\text{mod}} \approx .2$, $\text{Amplitudes:   } q_1 \approx  2.8292, q_2  \approx 2.8268$, $\text{Approximation } \approx 2.9124$}
\label{Fig14}
\end{figure}

In Figure \ref{Fig13} and Figure \ref{Fig14}, all four of the parameters are perturbed with $\epsilon = .1$ and we consider the cases when $\Delta - \Delta_{\text{mod}}$ is equal to .05, .1, .15, and .2. As we increase the delay past the critical delay, we see the amplitude increase as our amplitude approximation would expect. Indeed, our approximation of the amplitude is proportional to $\sqrt{\Delta - \Delta_{\text{mod}}}$, so increasing the difference between the delay and the critical delay will result in an increase in the approximation of the amplitude. Keep in mind that our approximation of the amplitude is really only an $O(1)$ approximation as our calculation was based off of Equation \ref{v20 equation 3} and did not rely on the equations obtained by collecting $O(\epsilon)$ terms, which made the analysis more manageable. Consequently, it is not particularly surprising that we see a noticeable amount of error between the actual amplitude and our approximation of the amplitude. The error seems to be around .1 for all four cases.

\section{Conclusion and Future Research} \label{sec_conclusion}

In this paper, we analyze a two-dimensional fluid model that incorporates customer choice which depends on delayed queue length information. This model is different from those considered in previous literature because of the asymmetry we introduced by perturbing four of the model parameters corresponding to one of the two queues. Analyzing this model allows us to explore the impact that breaking the symmetry has on the dynamics of the queueing system. We see how perturbing different model parameters can have different effects on the system's equilibrium, which we find a first-order approximation for. We consider the stability of this equilibrium to derive a first-order approximation for the critical delay at which the system exhibits a change in stability. Numerical experiments suggest that a Hopf bifurcation occurs at this critical delay as a limit cycle appears to be born when the delay is increased past the critical delay value we derived. We employ Lindstedt's method to get an $O(1)$ approximation for the amplitude of limit cycles near the bifurcation point. 

There are several extensions that could be made to this work. One extension would be to consider a system generalized to have $N > 2$ queues where parameters corresponding to each queue have different perturbations so that no two queues in the system are symmetric to each other. Another potential extension would be to consider different choice functions or to simply change the information that the choice model depends on. Such analysis could provide a better understanding of how providing customers with different types of information affects the dynamics of the system. One could also consider a queueing system with time-varying arrival rates, as in \citet{pender2018analysis}, but with asymmetry introduced to the system. We plan on exploring some of these extensions in future work.

\section*{Acknowledgements}
We would like to thank the Center for Applied Mathematics at Cornell University for sponsoring Philip Doldo's research. Finally, we acknowledge the gracious support of the National Science Foundation (NSF) for Jamol Pender’s Career Award CMMI \# 1751975.



\bibliography{Breaking_Symmetry_2}

\bibliographystyle{plainnat}

\end{document}